\pdfoutput=1
%% LyX 2.3.8 created this file.  For more info, see http://www.lyx.org/.
%% Do not edit unless you really know what you are doing.
\documentclass[a4paper]{amsart}
\usepackage[T1]{fontenc}
\usepackage{amstext}
\usepackage{amsthm}
\usepackage{amssymb}
\usepackage[unicode=true,pdfusetitle,
 bookmarks=true,bookmarksnumbered=false,bookmarksopen=false,
 breaklinks=false,pdfborder={0 0 0},pdfborderstyle={},backref=false,colorlinks=false]
 {hyperref}

\makeatletter

%%%%%%%%%%%%%%%%%%%%%%%%%%%%%% LyX specific LaTeX commands.

%%%%%%%%%%%%%%%%%%%%%%%%%%%%%% Textclass specific LaTeX commands.
\numberwithin{equation}{section}
\numberwithin{figure}{section}
\theoremstyle{plain}
\newtheorem{thm}{\protect\theoremname}[section]
\theoremstyle{plain}
\newtheorem{lem}[thm]{\protect\lemmaname}
\theoremstyle{plain}
\newtheorem{prop}[thm]{\protect\propositionname}
\theoremstyle{remark}
\newtheorem{rem}[thm]{\protect\remarkname}
\theoremstyle{definition}
\newtheorem{notation}[thm]{\protect\notationname}
\theoremstyle{definition}
\newtheorem{defn}[thm]{\protect\definitionname}
\theoremstyle{plain}
\newtheorem{cor}[thm]{\protect\corollaryname}

%%%%%%%%%%%%%%%%%%%%%%%%%%%%%% User specified LaTeX commands.
\usepackage{tikz-cd}
\usepackage{mathtools}
\usepackage{adjustbox}
\usepackage{enumitem}
\tikzcdset{scale cd/.style={every label/.append style={scale=#1},
    cells={nodes={scale=#1}}}}
\usepackage{eucal}
\usepackage{url}
\usepackage{mleftright}
\mleftright

\makeatother

\providecommand{\corollaryname}{Corollary}
\providecommand{\definitionname}{Definition}
\providecommand{\lemmaname}{Lemma}
\providecommand{\notationname}{Notation}
\providecommand{\propositionname}{Proposition}
\providecommand{\remarkname}{Remark}
\providecommand{\theoremname}{Theorem}

\begin{document}
\global\long\def\sf#1{\mathsf{#1}}%

\global\long\def\scr#1{\mathscr{{#1}}}%

\global\long\def\cal#1{\mathcal{#1}}%

\global\long\def\bb#1{\mathbb{#1}}%

\global\long\def\bf#1{\mathbf{#1}}%

\global\long\def\frak#1{\mathfrak{#1}}%

\global\long\def\fr#1{\mathfrak{#1}}%

\global\long\def\u#1{\underline{#1}}%

\global\long\def\tild#1{\widetilde{#1}}%

\global\long\def\mrm#1{\mathrm{#1}}%

\global\long\def\pr#1{\left(#1\right)}%

\global\long\def\abs#1{\left|#1\right|}%

\global\long\def\inp#1{\left\langle #1\right\rangle }%

\global\long\def\br#1{\left\{  #1\right\}  }%

\global\long\def\norm#1{\left\Vert #1\right\Vert }%

\global\long\def\hat#1{\widehat{#1}}%

\global\long\def\opn#1{\operatorname{#1}}%

\global\long\def\bigmid{\,\middle|\,}%

\global\long\def\Top{\sf{Top}}%

\global\long\def\Set{\sf{Set}}%

\global\long\def\SS{\sf{sSet}}%

\global\long\def\Kan{\sf{Kan}}%

\global\long\def\Cat{\mathcal{C}\sf{at}}%

\global\long\def\imfld{\cal M\mathsf{fld}}%

\global\long\def\ids{\cal D\sf{isk}}%

\global\long\def\ich{\cal C\sf h}%

\global\long\def\SW{\mathcal{SW}}%

\global\long\def\SHC{\mathcal{SHC}}%

\global\long\def\B{\sf B}%

\global\long\def\Spaces{\sf{Spaces}}%

\global\long\def\Mod{\sf{Mod}}%

\global\long\def\Nec{\sf{Nec}}%

\global\long\def\Fin{\sf{Fin}}%

\global\long\def\Ch{\sf{Ch}}%

\global\long\def\Ab{\sf{Ab}}%

\global\long\def\SA{\sf{sAb}}%

\global\long\def\P{\mathsf{POp}}%

\global\long\def\Op{\mathcal{O}\mathsf{p}}%

\global\long\def\Opg{\mathcal{O}\mathsf{p}_{\infty}^{\mathrm{gn}}}%

\global\long\def\Tup{\mathsf{Tup}}%

\global\long\def\Del{\mathbf{\Delta}}%

\global\long\def\id{\mathrm{id}}%

\global\long\def\Aut{\operatorname{Aut}}%

\global\long\def\End{\operatorname{End}}%

\global\long\def\Hom{\operatorname{Hom}}%

\global\long\def\Ext{\operatorname{Ext}}%

\global\long\def\sk{\operatorname{sk}}%

\global\long\def\ihom{\underline{\operatorname{Hom}}}%

\global\long\def\N{\mathrm{N}}%

\global\long\def\-{\text{-}}%

\global\long\def\op{\mathrm{op}}%

\global\long\def\To{\Rightarrow}%

\global\long\def\rr{\rightrightarrows}%

\global\long\def\rl{\rightleftarrows}%

\global\long\def\mono{\rightarrowtail}%

\global\long\def\epi{\twoheadrightarrow}%

\global\long\def\comma{\downarrow}%

\global\long\def\ot{\leftarrow}%

\global\long\def\corr{\leftrightsquigarrow}%

\global\long\def\lim{\operatorname{lim}}%

\global\long\def\colim{\operatorname{colim}}%

\global\long\def\holim{\operatorname{holim}}%

\global\long\def\hocolim{\operatorname{hocolim}}%

\global\long\def\Ran{\operatorname{Ran}}%

\global\long\def\Lan{\operatorname{Lan}}%

\global\long\def\Sk{\operatorname{Sk}}%

\global\long\def\Sd{\operatorname{Sd}}%

\global\long\def\Ex{\operatorname{Ex}}%

\global\long\def\Cosk{\operatorname{Cosk}}%

\global\long\def\Sing{\operatorname{Sing}}%

\global\long\def\Sp{\operatorname{Sp}}%

\global\long\def\Spc{\operatorname{Spc}}%

\global\long\def\Ho{\operatorname{Ho}}%

\global\long\def\Fun{\operatorname{Fun}}%

\global\long\def\map{\operatorname{map}}%

\global\long\def\diag{\operatorname{diag}}%

\global\long\def\Gap{\operatorname{Gap}}%

\global\long\def\cc{\operatorname{cc}}%

\global\long\def\Ob{\operatorname{Ob}}%

\global\long\def\Map{\operatorname{Map}}%

\global\long\def\Rfib{\operatorname{RFib}}%

\global\long\def\Lfib{\operatorname{LFib}}%

\global\long\def\Tw{\operatorname{Tw}}%

\global\long\def\Equiv{\operatorname{Equiv}}%

\global\long\def\Arr{\operatorname{Arr}}%

\global\long\def\Cyl{\operatorname{Cyl}}%

\global\long\def\Path{\operatorname{Path}}%

\global\long\def\Alg{\operatorname{Alg}}%

\global\long\def\ho{\operatorname{ho}}%

\global\long\def\Comm{\operatorname{Comm}}%

\global\long\def\Triv{\operatorname{Triv}}%

\global\long\def\triv{\operatorname{triv}}%

\global\long\def\Env{\operatorname{Env}}%

\global\long\def\Act{\operatorname{Act}}%

\global\long\def\act{\operatorname{act}}%

\global\long\def\loc{\operatorname{loc}}%

\global\long\def\Assem{\operatorname{Assem}}%

\global\long\def\Nat{\operatorname{Nat}}%

\global\long\def\lax{\mathrm{lax}}%

\global\long\def\weq{\mathrm{weq}}%

\global\long\def\fib{\mathrm{fib}}%

\global\long\def\cof{\mathrm{cof}}%

\global\long\def\inj{\mathrm{inj}}%

\global\long\def\univ{\mathrm{univ}}%

\global\long\def\Ker{\opn{Ker}}%

\global\long\def\Coker{\opn{Coker}}%

\global\long\def\Im{\opn{Im}}%

\global\long\def\Coim{\opn{Im}}%

\global\long\def\coker{\opn{coker}}%

\global\long\def\im{\opn{\mathrm{im}}}%

\global\long\def\coim{\opn{coim}}%

\global\long\def\gn{\mathrm{gn}}%

\global\long\def\Mon{\mathrm{Mon}}%

\global\long\def\Un{\mathrm{Un}}%

\global\long\def\St{\mathrm{St}}%

\global\long\def\CA{\operatorname{CAlg}}%

\global\long\def\rd{\mathrm{rd}}%

\global\long\def\xmono#1#2{\stackrel[#2]{#1}{\rightarrowtail}}%

\global\long\def\xepi#1#2{\stackrel[#2]{#1}{\twoheadrightarrow}}%

\global\long\def\adj{\stackrel[\longleftarrow]{\longrightarrow}{\bot}}%

\global\long\def\btimes{\boxtimes}%

\global\long\def\ps#1#2{\prescript{}{#1}{#2}}%

\global\long\def\ups#1#2{\prescript{#1}{}{#2}}%

\global\long\def\hofib{\mathrm{hofib}}%

\global\long\def\cofib{\mathrm{cofib}}%

\global\long\def\Vee{\bigvee}%

\global\long\def\w{\wedge}%

\global\long\def\t{\otimes}%

\global\long\def\bp{\boxplus}%

\global\long\def\rcone{\triangleright}%

\global\long\def\lcone{\triangleleft}%

\global\long\def\S{\mathsection}%

\global\long\def\p{\prime}%

\global\long\def\pp{\prime\prime}%

\global\long\def\W{\overline{W}}%

\global\long\def\o#1{\overline{#1}}%

\title[Envelopes of Families of $\infty$-Operads and Operadic Kan Extensions]{Monoidal Envelopes of Families of $\infty$-Operads and $\infty$-Operadic Kan Extensions}
\author{Kensuke Arakawa}
\email{arakawa.kensuke.22c@st.kyoto-u.ac.jp}
\address{Department of Mathematics, Kyoto University, Kyoto, 606-8502, Japan}
\subjclass[2000]{18N70, 55P48}
\keywords{$\infty$-category, $\infty$-operad, monoidal envelope, operadic
Kan extension}
\begin{abstract}
We provide details of the proof of Lurie's theorem on operadic Kan
extensions. Along the way, we generalize the construction of monoidal
envelopes of $\infty$-operads to families of $\infty$-operads and
use it to construct the fiberwise direct sum functor, both of which
we characterize by certain universal properties. Aside from their
use in elaborating the proof of Lurie's theorem, these results and
constructions have their independent interest.
\end{abstract}

\maketitle
\tableofcontents{}

\section*{Introduction}

In his monumental book \textit{Higher Algebra} \cite{HA}, Lurie introduced
(among other things) the notion of $\infty$-operads, an $\infty$-categorical
analog of (colored) operads. Given an $\infty$-operad $\cal O^{\t}$
and a symmetric monoidal $\infty$-category $\cal C^{\t}$, we can
form the $\infty$-category $\Alg_{\cal O}\pr{\cal C}$ of $\cal O$-algebras
in $\cal C$. If $f:\cal O^{\t}\to\cal O^{\p\t}$ is a morphism of
$\infty$-operads, pulling back along $f$ determines a functor $f^{*}:\Alg_{\cal O'}\pr{\cal C}\to\Alg_{\cal O}\pr{\cal C}$.
In analogy with restrictions and extensions of scalars, it is natural
to ask whether the map $f^{*}$ has a left adjoint. Lurie gives an
affirmative answer to this question in \cite[Corollary 3.1.3.5]{HA},
provided that $\cal C$ has a good supply of well-behaved colimits.
Because of its fundamental importance, the result has been applied
repeatedly in his work. 

Lurie's proof of \cite[Corollary 3.1.3.5]{HA} relies on another major
theorem on operadic Kan extensions \cite[Theorem 3.1.2.3]{HA} (or
Theorem \ref{thm:3.1.2.3}), which we call the \textbf{fundamental
theorem of operadic Kan extensions}, or FTOK for short. The proof
of FTOK, as Lurie himself acknowledges, is very long. Perhaps because
of the length of the proof, he left out some crucial details of the
arguments. This note aims to provide details of these nontrivial omissions.
(In particular, we do \textit{not} aim to provide a more concise proof
of FTOK than the one given in \cite{HA}.)

We can roughly divide the missing details into two parts: The first
one is the datum of ``coherent homotopy.'' In the proof of \cite[Theorem 3.1.2.3]{HA}
(to be more precise, on p. 337), Lurie claims that certain diagrams
are ``equivalent'' without writing the actual equivalence. Such
a practice is fairly common in the literature and is understandable
to some extent. However, in our case, we must be more attentive because
the details are somewhat involved. We thus give a complete treatment
of the equivalence in this note. (This corresponds to Lemma \ref{lem:contribution1}.)
The second missing detail is the verification that all the combinatorics
fit together. Such a detail, again, could be left to the reader if
the verification is trivial. However, we feel that this does not apply
to the case at hand; the details are complicated enough that it deserves
a separate treatment. We thus record every single detail of the verification.
(This corresponds to Lemmas \ref{lem:contribution2}, \ref{lem:contribution3},
and \ref{lem:contribution4}.) 

Here is an outline of this note. In Section \ref{sec:HA}, we quote
some results from Lurie's book \cite{HA} that we will use in this
paper, to draw a clear line between what we accept as established
and what we do not. Sections \ref{sec:families} through \ref{sec:direct_sum}
concern constructions on families of $\infty$-operads that we will
need in supplementing a part of Lurie's proof (namely, Lemma \ref{lem:contribution1}).
More precisely, in Section \ref{sec:families}, we will prove an equivalent
formulation of families of $\infty$-operads, and in Section \ref{sec:env},
we generalize Lurie's symmetric monoidal envelopes to families of
$\infty$-operads. Using symmetric monoidal envelopes, we can define
the fiberwise direct sum functor, whose properties we discuss in Section
\ref{sec:direct_sum}. In addition to its usage to fill in the details
of Lurie's argument, we believe that these constructions also have
some independent interest. In Section \ref{sec:FTOK}, we will reproduce
Lurie's proof of FTOK, and indicate the parts that require further
elaborations as lemmas. These lemmas will then be proved in Section
\ref{sec:leftover}, using results from earlier sections.

\section*{Notation and Terminology}

Our notation and terminology mostly follow those of \cite{HA}. Here
are some deviations.
\begin{itemize}
\item We will say that a morphism of simplicial sets is \textbf{final} if
it is cofinal in the sense of \cite{HTT}, and \textbf{initial} if
its opposite is final. 
\item If $\cal C$ is a category and $\alpha$ is an ordinal (or more generally
a well-ordered set), then an \textbf{$\alpha$-sequence} in $\cal C$
is a functor $F:\alpha\to\cal C$ such that the map $\colim_{\beta<\lambda}F\beta\to F\beta$
is an isomorphism for each limit ordinal less than $\alpha$. 
\item If $X$ is a simplicial set, then we denote the cone point of the
simplicial set $X^{\rcone}$ by $\infty$.
\item Let $p:X\to S$ be an inner fibration of simplicial sets, and let
$f:s\to s'$ be an edge of $S$. We say that $p$ \textbf{admits cocartesian
morphisms over }$f$ if for each object $x\in X\times_{S}\{s\}$,
there is a $p$-cocartesian morphism $x\to x'$ lying over $f$. 
\item If $\cal C$ is an $\infty$-category, we let $\cal C^{\simeq}$ denote
the subcategory of $\cal C$ spanned by the equivalences of $\cal C$.
\end{itemize}
% By degeneration, we mean the pullback of a nondegenerate simplex along a surjection.

\section*{Acknowledgment}

The author appreciates the anonymous referee for carefully examining
this paper, improving the exposition, and suggesting that the proof
of Proposition \ref{prop:sec1_main}, which was very complicated in
the first draft of this paper, should admit an easier proof.

\section{\label{sec:HA}Results Freely Used from Higher Algebra}

While the goal of this paper is to expand on the details of the proof
of FTOK given in Lurie's book \textit{Higher Algebra} \cite{HA},
we (of course) need to rely on constructions and results of loc. cit.
to achieve this goal. This puts us in a difficult position, where
it can sometimes be unclear which results are deemed to have complete
proofs and which are not. To overcome this, we list the results that
we freely use (and which the author believes has complete proofs)
from \cite{HA} below.

\subsection{Results on Cocartesian Fibrations}
\begin{lem}
\cite[Lemma 2.2.4.11]{HA}\label{lem:2.2.4.11} Let $p:\cal C\to\cal D$
be a cocartesian fibration of $\infty$-categories. Suppose there
is a full subcategory $\cal X\subset\cal C$ with the following properties:

\begin{enumerate}[label=(\roman*)]

\item For each object $D\in\cal D$, the inclusion $\cal X_{D}\subset\cal C_{D}$
admits a left adjoint $L_{D}:\cal E_{D}\to\cal X_{D}$.

\item For each morphism $f:D\to D'$ in $\cal D$, the associated
functor $f_{!}:\cal C_{D}\to\cal C_{D'}$ carries $L_{D}$-equivalences
(i.e., its image under $L_{D}$ is an equivalence) to $L_{D'}$-equivalences.

\end{enumerate}

Let $q=p\vert_{\cal X}:\cal X\to\cal D$ denote the restriction of
$p$. Then the following holds:
\begin{enumerate}
\item The functor $q$ is a cocartesian fibration.
\item Let $g:D\to D'$ be a morphism in $\cal D$, and let $f:X\to Y$ be
a morphism in $\cal X$ lifting $g$. Then $f$ is $q$-cocartesian
if and only if the map $g_{!}C\to C'$ is an $L_{D'}$-equivalence,
where $g_{!}:\cal C_{D}\to\cal C_{D'}$ is the functor induced by
$g$.
\end{enumerate}
\end{lem}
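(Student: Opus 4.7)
The plan is to verify directly that $q:\cal C\to\cal D$ is an inner fibration, exhibit explicit $q$-cocartesian lifts built from $p$-cocartesian lifts and the reflectors $L_{D}$, and then read off the characterization in (2) by a mapping-space comparison. That $q$ is an inner fibration is immediate: since $\cal C\subset\cal E$ is a full subcategory, any inner horn $\Lambda^{n}_{i}\to\cal C$ (with $0<i<n$) together with a filler $\Delta^{n}\to\cal E$ automatically lies in $\cal C$, the vertices of the filler being the vertices of the horn.

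To produce a $q$-cocartesian lift of a morphism $g:D\to D'$ starting at an object $C\in\cal C_{D}$, I would choose a $p$-cocartesian lift $\tilde g:C\to g_{!}C$ and the adjunction unit $\eta:g_{!}C\to L_{D'}(g_{!}C)$, and take $f$ to be any composite $\eta\circ\tilde g$ in $\cal E$ (hence in $\cal C$ by fullness). To see that $f$ is $q$-cocartesian, fix $C''\in\cal C_{D''}$ and decompose the natural map
\[
\Map_{\cal C}(L_{D'}(g_{!}C),C'')\longrightarrow\Map_{\cal C}(C,C'')\times_{\Map_{\cal D}(D,D'')}\Map_{\cal D}(D',D'')
\]
as $\eta^{*}$ followed by $\tilde g^{*}$, using that $\Map_{\cal C}=\Map_{\cal E}$ on objects of $\cal C$. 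The second map is an equivalence because $\tilde g$ is $p$-cocartesian. Working fiberwise over $h\in\Map_{\cal D}(D',D'')$, the first map is identified via $h$-cocartesian transport with precomposition by $h_{!}(\eta):h_{!}(g_{!}C)\to h_{!}(L_{D'}(g_{!}C))$ on $\Map_{\cal E_{D''}}(-,C'')$, and hypothesis (ii) guarantees that $h_{!}(\eta)$ is an $L_{D''}$-equivalence; since $C''\in\cal C_{D''}$, precomposition by it is an equivalence.

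The characterization in (2) falls out of the same analysis. Given an arbitrary $f:C\to C'$ in $\cal C$ over $g$, factor it as $f=f'\circ\tilde g$ with $\tilde g$ $p$-cocartesian and $f':g_{!}C\to C'$ lying in the fiber $\cal E_{D'}$. The same fiberwise mapping-space comparison shows that $f$ is $q$-cocartesian if and only if, for every $h:D'\to D''$ and every $C''\in\cal C_{D''}$, the map $h_{!}(f')$ induces an equivalence on $\Map_{\cal E_{D''}}(-,C'')$. By the Yoneda lemma in $\cal C_{D''}$ together with the adjunction $L_{D''}\dashv\iota$, this says that $h_{!}(f')$ is an $L_{D''}$-equivalence for every $h$. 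Specializing to $h=\id_{D'}$ (for which $h_{!}$ is naturally equivalent to the identity on $\cal E_{D'}$) forces $f'$ to be an $L_{D'}$-equivalence, and conversely hypothesis (ii) propagates a single $L_{D'}$-equivalence to all choices of $h$.

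The proof is conceptually short but the main obstacle is organizational: every mapping space in $\cal E$ must be dissected by its projection to the mapping space in $\cal D$, and at each stage one must pass cleanly between morphisms out of $g_{!}C$ in $\cal E$ and morphisms in a fiber $\cal E_{D''}$ via a chosen $h$-cocartesian lift. One must also invoke the standard fact that identity morphisms admit equivalences as cocartesian lifts, so that $\id_{!}$ is naturally equivalent to the identity endofunctor of each fiber; this is what allows the reduction from "for all $h$" to "for $h=\id$" in the final step.
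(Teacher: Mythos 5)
The paper does not prove this lemma; it is stated and credited directly to \cite[Lemma 2.2.4.11]{HA}, so there is no in-paper argument to compare against, and your proof stands as a self-contained replacement. It is correct. Your packaging differs from Lurie's: he first upgrades hypotheses (i) and (ii) to a \emph{global} left adjoint $L:\cal E\to\cal C$ of the inclusion compatible with $p$ (this equivalence of formulations is what the paper records as Lemma~\ref{lem:2.2.4.12}) and then invokes a general criterion; you work directly with the mapping-space characterization of cocartesian edges, building the candidate lift as a $p$-cocartesian $\tilde g$ followed by the unit $\eta$, factoring the comparison map as $\tilde g^{*}\circ\eta^{*}$, and checking $\eta^{*}$ fiberwise over $\Map_{\cal D}(D',D'')$ using (ii). Both arguments are the same idea in different clothing; Lurie's global-adjoint route integrates with Lemmas~\ref{lem:2.2.4.12}--\ref{lem:2.2.4.15} as the paper uses them, while yours is more self-contained. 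Two points you elide should be made explicit in a polished write-up: to conclude an equivalence from a fiberwise check over $\Map_{\cal D}(D',D'')$ you need that the relevant projections of mapping spaces are Kan fibrations (a standard fact about inner fibrations), and the identification of the fiber of $\eta^{*}$ over $h$ with precomposition by $h_{!}(\eta)$ on $\Map_{\cal E_{D''}}(-,C'')$ uses the cocartesian-transport description of mapping spaces. Both are routine but carry the real weight, so cite or prove them.
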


\begin{lem}
\cite[Lemma 2.4.4.6]{HA}\label{lem:2.4.4.6} Let $\cal C$ be an
$\infty$-category, let $0<i<n$ be integers, and let $p:\cal C\to\Delta^{n}$
be a functor. Suppose that for each object $X\in\cal C_{i-1}$, there
is a $p$-cocartesian morphism $f:X\to Y$ with $Y\in\cal C_{i}$.
Then the inclusion
\[
\cal C\times_{\Delta^{n}}\Lambda_{i}^{n}\hookrightarrow\cal C
\]
is a weak categorical equivalence.
\end{lem}

\subsection{Results on Operadic Colimit Diagrams}
\begin{prop}
\cite[Proposition 3.1.1.7]{HA}\label{prop:3.1.1.7} Let $q:\cal C^{\t}\to\cal O^{\t}$
be a fibration of $\infty$-operads, and let $\overline{p}:K^{\rcone}\to\cal C_{\act}^{\t}$
be a diagram. The following conditions are equivalent:
\begin{enumerate}
\item The map $\overline{p}$ is a weak operadic $q$-colimit diagram.
\item For every $n>0$ and every diagram% https://q.uiver.app/#q=WzAsNCxbMCwwLCJLXFxzdGFyIFxccGFydGlhbFxcRGVsdGFebiJdLFswLDEsIktcXHN0YXIgXFxEZWx0YV5uIl0sWzEsMCwiXFxtYXRoY2Fse0N9Xlxcb3RpbWVzIF97XFxtYXRocm17YWN0fX0iXSxbMSwxLCJcXG1hdGhjYWx7T31eXFxvdGltZXMgX3tcXG1hdGhybXthY3R9fSJdLFswLDFdLFswLDIsImZfMCJdLFsyLDNdLFsxLDMsIlxcb3ZlcmxpbmV7Zn0iLDJdLFsxLDIsImYiLDAseyJzdHlsZSI6eyJib2R5Ijp7Im5hbWUiOiJkYXNoZWQifX19XV0=
\[\begin{tikzcd}
	{K\star \partial\Delta^n} & {\mathcal{C}^\otimes _{\mathrm{act}}} \\
	{K\star \Delta^n} & {\mathcal{O}^\otimes _{\mathrm{act}}}
	\arrow["{f_0}", from=1-1, to=1-2]
	\arrow[from=1-1, to=2-1]
	\arrow[from=1-2, to=2-2]
	\arrow["f", dashed, from=2-1, to=1-2]
	\arrow["{\overline{f}}"', from=2-1, to=2-2]
\end{tikzcd}\]such that the restriction of $f_{0}$ to $K\star\{0\}$ coincides
with $\overline{p}$ and $f_{0}\pr n\in\cal C$, there exists a dotted
arrow $f$ rendering the diagram commutative.
\end{enumerate}
\end{prop}

\begin{prop}
\cite[Proposition 3.1.1.8]{HA}\label{prop:3.1.1.8} Let $q:\cal C^{\t}\to\cal O^{\t}$
be a fibration of $\infty$-operads, and let $\{\overline{p}_{i}:K_{i}^{\rcone}\to\cal C_{\act}^{\t}\}_{i\in I}$
a finite collection of operadic $q$-colimit diagrams. Set $K=\prod_{i\in I}K_{i}$,
and let $\overline{p}$ denote the composition
\[
K^{\rcone}\to\prod_{i\in I}K_{i}^{\rcone}\to\prod_{i\in I}\cal C_{\act}^{\t}\xrightarrow{\bigoplus_{I}}\cal C_{\act}^{\t}.
\]
Then $\overline{p}$ is an operadic $q$-colimit diagram.
\end{prop}

\subsection{Miscellany}
\begin{lem}
\cite[Lemma 3.1.2.5]{HA}\label{lem:3.1.2.5} Let $\cal C$ be an
$\infty$-category and $\cal C^{0}\subset\cal C$ a full subcategory.
Let $\sigma:\Delta^{n}\to\cal C$ be a nondegenerate simplex such
that $\sigma\pr i\not\in\cal C^{0}$ for each $0\leq i\leq n$. Consider
the following simplicial sets:
\begin{enumerate}
\item The simplicial subset $K\subset\cal C$ consisting of those simplices
$\tau:\Delta^{k}\star\Delta^{l}\to\cal C$, where $k,l\geq-1$, $\tau\vert\Delta^{k}$
factors through $\cal C^{0}$, and $\tau\vert\Delta^{l}$ factors
through $\sigma$.
\item The simplicial subset $K_{0}\subset\cal C$ consisting of those simplices
$\tau:\Delta^{k}\star\Delta^{l}\to\cal C$, where $k,l\geq-1$, $\tau\vert\Delta^{k}$
factors through $\cal C^{0}$, and $\tau\vert\Delta^{l}$ factors
through $\sigma\vert\partial\Delta^{n}$.
\end{enumerate}
Then the map $K_{0}\amalg_{\cal C_{/\sigma}^{0}\star\partial\Delta^{n}}\pr{\cal C_{/\sigma}^{0}\star\Delta^{n}}\to K$
is a trivial cofibration in the Joyal model structure.
\end{lem}

\begin{rem}
In the final paragraph of the proof of Lemma \ref{lem:3.1.2.5} in
\cite{HA}, a certain diagram of simplicial sets (denoted by $\{\cal C_{/\sigma_{J}}^{0}\}_{J\in\cal I_{0}^{\op}}$)
is claimed to be projectively cofibrant in the projective model structure,
without further explanations. One way to prove the projective cofibrancy
is to observe that the indexing category of the diagram (denoted by
$\cal I_{0}^{\op}$) has a natural structure of a direct category,
and then to use a criterion for projectively cofibrant diagrams in
diagram categories indexed by direct categories \cite[Theorem 5.1.3]{Hovey2007}.
\end{rem}

\section{\label{sec:families}A Result on Families of \texorpdfstring{$\infty$}{Infinity}-Operads}

Let $\cal C$ be an $\infty$-category. Recall that a $\cal C$\textbf{-family
of $\infty$-operads} \cite[Definition 2.3.2.10]{HA} is a categorical
fibration $p:\cal M^{\t}\to\cal C\times N\pr{\Fin_{\ast}}$ satisfying
the following conditions:
\begin{itemize}
\item [(a)]For each object $M\in\cal M^{\t}$ with image $\pr{C,\inp m}\in\cal C\times N\pr{\Fin_{\ast}}$
and for each inert map $\alpha:\inp m\to\inp n$ in $N\pr{\Fin_{\ast}}$,
the morphism $\pr{\id_{C},\alpha}$ admits a $p$-cocartesian lift.
\item [(b)]Let $M\in\cal M^{\t}$ be an object with image $\pr{C,\inp m}\in\cal C\times N\pr{\Fin_{\ast}}$,
where $m\geq1$. Suppose we are given, for each $1\leq i\leq m$,
a $p$-cocartesian lift $f_{i}:M\to M_{i}$ over $\pr{\id_{C},\rho^{i}}:\pr{C,\inp n}\to\pr{C,\inp 1}$.
Then the morphisms $f_{i}$ form a $p$-limit cone. Moreover, every
object in $\cal M_{\inp 0}^{\t}$ is $p$-terminal.
\item [(c)]Let $n\geq1$ and $C\in\cal C$. Given objects $M_{1},\dots,M_{n}\in\cal M_{C}$,
there is an object $M\in\cal M^{\t}$ lying over $\pr{C,\inp n}$
which admits $p$-cocartesian morphisms $\{M\to M_{i}\}_{1\leq i\leq n}$
over $\{\pr{\id_{C},\rho^{i}}:\pr{C,\inp n}\to\pr{C,\inp 1}\}_{1\leq i\leq n}$.
Moreover, $\cal M_{\pr{C,\inp 0}}^{\t}$ is non-empty.
\end{itemize}
The goal of this section is to prove the following equivalent formulation
of families of $\infty$-operads:
\begin{prop}
\label{prop:sec1_main}Let $\cal C$ be an $\infty$-category, and
let $p:\cal M^{\t}\to\cal C\times N\pr{\Fin_{\ast}}$ be a categorical
fibration satisfying conditions (a) and (b) above. Then the following
conditions are equivalent:
\begin{itemize}
\item [(c-i)]The map $p$ satisfies condition (c).
\item [(c-ii)]For each $1\leq i\leq n$, let $\rho_{!}^{i}:\cal M_{\inp n}^{\t}\to\cal M_{\inp 1}^{\t}$
denote the functor over $\cal C$ induced by the morphism $\rho^{i}$.
Then for each $n\geq1$, the functor
\[
\pr{\rho_{!}^{i}}_{1\leq i\leq n}:\cal M_{\inp n}^{\t}\to\cal M\times_{\cal C}\cdots\times_{\cal C}\cal M
\]
is an equivalence of $\infty$-categories. Moreover, the functor $\cal M_{\inp 0}^{\t}\to\cal C$
is a trivial fibration.
\end{itemize}
\end{prop}

Here the functor $\rho_{!}^{i}:\cal M_{\inp n}^{\t}\to\cal M$ is
obtained in the following way: Since every inert morphism in $\cal C\times N\pr{\Fin_{\ast}}$
admits a $p$-cocartesian lift, it is possible to choose a cocartesian
natural transformation $\cal M_{\inp n}^{\t}\times\Delta^{1}\to\cal M^{\t}$
fitting into the commutative diagram % https://q.uiver.app/#q=WzAsNCxbMSwwLCJcXG1hdGhjYWx7TX1eXFxvdGltZXMiXSxbMSwxLCJcXG1hdGhjYWx7Q31cXHRpbWVzIE4oXFxtYXRoc2Z7RmlufV9cXGFzdCkiXSxbMCwxLCJcXG1hdGhjYWx7TX1eXFxvdGltZXMgX3tcXGxhbmdsZSBuXFxyYW5nbGV9XFx0aW1lcyBcXERlbHRhXjEiXSxbMCwwLCJcXG1hdGhjYWx7TX1eXFxvdGltZXMgX3tcXGxhbmdsZSBuXFxyYW5nbGV9XFx0aW1lcyBcXHswXFx9Il0sWzIsMSwicF8yXFx0aW1lcyBcXHJob15pIiwyXSxbMCwxXSxbMywwXSxbMywyXSxbMiwwLCIiLDEseyJzdHlsZSI6eyJib2R5Ijp7Im5hbWUiOiJkYXNoZWQifX19XV0=
\[\begin{tikzcd}
	{\mathcal{M}^\otimes _{\langle n\rangle}\times \{0\}} & {\mathcal{M}^\otimes} \\
	{\mathcal{M}^\otimes _{\langle n\rangle}\times \Delta^1} & {\mathcal{C}\times N(\mathsf{Fin}_\ast).}
	\arrow[from=1-1, to=1-2]
	\arrow[from=1-1, to=2-1]
	\arrow[from=1-2, to=2-2]
	\arrow[dashed, from=2-1, to=1-2]
	\arrow["{p_2\times \rho^i}"', from=2-1, to=2-2]
\end{tikzcd}\]We understand that $\rho_{!}^{i}$ is the restriction of the filler,
so that it is a functor over $\cal C$ and its homotopy class over
$\cal C$ is well-defined.
\begin{rem}
Readers familiar with generalized $\infty$-operads will find that
Proposition \ref{prop:sec1_main} is more or less a rephrasing of
\cite[Proposition 2.3.2.11]{HA}. We include a proof nonetheless,
since we were not able to find the exact statement of Proposition
\ref{prop:sec1_main} in the literature.
\end{rem}

For the proof of Proposition \ref{prop:sec1_main}, we introduce a
bit of notation:
\begin{notation}
Given an inner fibration of simplicial sets $p:X\to S$ and a map
of simplicial sets $K\to S$, we let $\Fun_{S}^{\cc}\pr{K,X}\subset\Fun_{S}\pr{K,X}$
denote the full subcategory spanned by the maps $K\to X$ that carry
each edge in $K$ to a $p$-cocartesian morphisms.
\end{notation}

\begin{proof}
[Proof of Proposition \ref{prop:sec1_main}]Obviously (c-ii) implies
(c-i). Conversely, suppose that condition (c-i) is satisfied. We must
show that $p$ satisfies condition (c-ii). Since the categorical fibration
$\cal M_{\inp 0}^{\t}\to\cal C$ is fully faithful (by condition (b))
and essentially surjective (by condition (c)), it is a trivial fibration.
Therefore, it suffices to show that, for each $n\geq1$, the functor
\[
\pr{\rho_{!}^{i}}_{1\leq i\leq n}:\cal M_{\inp n}^{\t}\to\cal M\times_{\cal C}\cdots\times_{\cal C}\cal M
\]
is a categorical equivalence.

Set $\cal I=\{1,\dots,n\}\star\{\inp 0\}$. We define a functor $\cal I^{\lcone}\to N\pr{\Fin_{\ast}}$
by mapping the cone point $\infty\in\cal I^{\lcone}$ to $\inp n$,
the morphism $\infty\to i$ to $\rho^{i}:\inp n\to\inp 1$ for $1\leq i\leq n$,
and the morphism $i\to\inp 0$ to the unique morphism $\inp 1\to\inp 0$.
By condition (a), the functor $p:\cal M^{\t}\to N\pr{\Fin_{\ast}}$
admits cocartesian morphisms over inert maps. Therefore, the functor
\[
\pi:\Fun_{N\pr{\Fin_{\ast}}}^{\cc}\pr{\cal I^{\lcone},\cal M^{\t}}\to\Fun_{N\pr{\Fin_{\ast}}}\pr{\{\infty\},\cal M^{\t}}\cong\cal M_{\inp n}^{\t}
\]
is a trivial fibration (\cite[Proposition 4.3.2.15]{HTT}). Choose
a section $s$ of $\pi$. By the definition of $\pr{\rho^{i}}_{1\leq i\leq n}$,
the diagram% https://q.uiver.app/#q=WzAsNyxbMCwwLCJcXG1hdGhjYWx7TX1eXFxvdGltZXMgX3tcXGxhbmdsZSBuIFxccmFuZ2xlfSJdLFswLDIsIlxcbWF0aGNhbHtNfV5uXFx0aW1lcyBfe1xcbWF0aGNhbHtDfV5ufVxcbWF0aGNhbHtDfSJdLFsxLDAsIlxcb3BlcmF0b3JuYW1le0Z1bn1ee1xcbWF0aHJte2NjfX1fe04oXFxtYXRoc2Z7RmlufV9cXGFzdCl9KFxcbWF0aGNhbHtJfV57XFx0cmlhbmdsZWxlZnR9LFxcbWF0aGNhbHtNfV5cXG90aW1lcyApIl0sWzIsMCwiXFxvcGVyYXRvcm5hbWV7RnVufV57XFxtYXRocm17Y2N9fV97TihcXG1hdGhzZntGaW59X1xcYXN0KX0oXFxtYXRoY2Fse0l9LFxcbWF0aGNhbHtNfV5cXG90aW1lcyApIl0sWzIsMSwiXFxvcGVyYXRvcm5hbWV7RnVufV57XFxtYXRocm17Y2N9fV97TihcXG1hdGhzZntGaW59X1xcYXN0KX0oXFxEZWx0YV4xLFxcbWF0aGNhbHtNfV5cXG90aW1lcyApXm5cXHRpbWVzIF97KFxcbWF0aGNhbHtNfV5cXG90aW1lcyBfe1xcbGFuZ2xlIDBcXHJhbmdsZX0pXm59XFxtYXRoY2Fse019Xlxcb3RpbWVzIF97XFxsYW5nbGUgMFxccmFuZ2xlfSJdLFsyLDIsIlxcbWF0aGNhbHtNfV5uXFx0aW1lcyBfe1xcb3BlcmF0b3JuYW1le0Z1bn0oXFx7MFxcfSxcXG1hdGhjYWx7Q30pXm59XFxvcGVyYXRvcm5hbWV7RnVufV5cXHNpbWVxKFxcRGVsdGFeMSxcXG1hdGhjYWx7Q30pXm5cXHRpbWVzIF97XFxvcGVyYXRvcm5hbWV7RnVufShcXHsxXFx9LFxcbWF0aGNhbHtDfSlebn1cXG1hdGhjYWx7Q30iXSxbMSwyLCJcXG1hdGhjYWx7TX1eblxcdGltZXMgX3tcXG1hdGhjYWx7Q31ebn1cXG1hdGhjYWx7Q31eblxcdGltZXMgX3tcXG1hdGhjYWx7Q31ebiB9XFxtYXRoY2Fse0N9Il0sWzAsMSwiKFxccmhvXmkpX3sxXFxsZXEgaVxcbGVxIG59IiwyXSxbMCwyLCJzIl0sWzIsMywiXFxnYW1tYSJdLFszLDQsIlxcY29uZyJdLFs0LDUsIlxcYmV0YSJdLFswLDIsIlxcc2ltZXEiLDJdLFsxLDYsIlxcY29uZyIsMl0sWzYsNSwiXFxhbHBoYSIsMl0sWzYsNSwiXFxzaW1lcSJdLFs0LDUsIlxcc2ltZXEiLDJdXQ==
\[\begin{tikzcd}[column sep =small, scale cd =.9]
	{\mathcal{M}^\otimes _{\langle n \rangle}} & {\operatorname{Fun}^{\mathrm{cc}}_{N(\mathsf{Fin}_\ast)}(\mathcal{I}^{\triangleleft},\mathcal{M}^\otimes )} & {\operatorname{Fun}^{\mathrm{cc}}_{N(\mathsf{Fin}_\ast)}(\mathcal{I},\mathcal{M}^\otimes )} \\
	&& {\operatorname{Fun}^{\mathrm{cc}}_{N(\mathsf{Fin}_\ast)}(\Delta^1,\mathcal{M}^\otimes )^n\times _{(\mathcal{M}^\otimes _{\langle 0\rangle})^n}\mathcal{M}^\otimes _{\langle 0\rangle}} \\
	{\mathcal{M}^n\times _{\mathcal{C}^n}\mathcal{C}} & {\mathcal{M}^n\times _{\mathcal{C}^n}\mathcal{C}^n\times _{\mathcal{C}^n }\mathcal{C}} & {\mathcal{M}^n\times _{\operatorname{Fun}(\{0\},\mathcal{C})^n}\operatorname{Fun}^\simeq(\Delta^1,\mathcal{C})^n\times _{\operatorname{Fun}(\{1\},\mathcal{C})^n}\mathcal{C}}
	\arrow["s", from=1-1, to=1-2]
	\arrow["\simeq"', from=1-1, to=1-2]
	\arrow["{(\rho^i)_{1\leq i\leq n}}"', from=1-1, to=3-1]
	\arrow["\gamma", from=1-2, to=1-3]
	\arrow["\cong", from=1-3, to=2-3]
	\arrow["\beta", from=2-3, to=3-3]
	\arrow["\simeq"', from=2-3, to=3-3]
	\arrow["\cong"', from=3-1, to=3-2]
	\arrow["\alpha"', from=3-2, to=3-3]
	\arrow["\simeq", from=3-2, to=3-3]
\end{tikzcd}\]commutes up to natural equivalence. Here $\Fun^{\simeq}\pr{\Delta^{1},\cal C}\subset\Fun\pr{\Delta^{1},\cal C}$
denotes the full subcategory spanned by the equivalences, the map
$\alpha$ is the categorical equivalence induced by the diagonal map
$\cal C\to\Fun^{\simeq}\pr{\Delta^{1},\cal C}$, and the map $\Delta^{1}\to N\pr{\Fin_{\ast}}$
corresponds to the morphism $\inp 1\to\inp 0$. We also observe that
the functor $\beta$ is a categorical equivalence. Indeed, the vertical
arrows of the square% https://q.uiver.app/#q=WzAsNCxbMCwwLCJcXG9wZXJhdG9ybmFtZXtGdW59XntcXG1hdGhybXtjY319X3tOKFxcbWF0aHNme0Zpbn1fXFxhc3QpfShcXERlbHRhXjEsXFxtYXRoY2Fse019Xlxcb3RpbWVzICkiXSxbMCwxLCJcXG1hdGhjYWx7TX0iXSxbMSwwLCJcXG9wZXJhdG9ybmFtZXtGdW59XntcXHNpbWVxfShcXERlbHRhXjEsXFxtYXRoY2Fse0N9KSJdLFsxLDEsIlxcb3BlcmF0b3JuYW1le0Z1bn0oXFx7MFxcfSxcXG1hdGhjYWx7Q30pIl0sWzAsMV0sWzAsMl0sWzIsM10sWzEsM11d
\[\begin{tikzcd}
	{\operatorname{Fun}^{\mathrm{cc}}_{N(\mathsf{Fin}_\ast)}(\Delta^1,\mathcal{M}^\otimes )} & {\operatorname{Fun}^{\simeq}(\Delta^1,\mathcal{C})} \\
	{\mathcal{M}} & {\operatorname{Fun}(\{0\},\mathcal{C})}
	\arrow[from=1-1, to=1-2]
	\arrow[from=1-1, to=2-1]
	\arrow[from=1-2, to=2-2]
	\arrow[from=2-1, to=2-2]
\end{tikzcd}\]are trivial fibrations, so the functor
\[
\Fun_{N\pr{\Fin_{\ast}}}^{\cc}\pr{\Delta^{1},\cal M^{\t}}\to\cal M\times_{\Fun\pr{\{0\},\cal C}}\Fun^{\simeq}\pr{\Delta^{1},\cal C}
\]
is a categorical equivalence. Since the functor $\cal M_{\inp 0}^{\t}\to\cal C$
is a categorical equivalence as observed above, we deduce that $\beta$
is a categorical equivalence. To complete the proof, therefore, it
will suffice to show that $\gamma$ is a categorical equivalence. 

Let $q:\cal M^{\t}\to N\pr{\Fin_{\ast}}$ denote the projection. By
\cite[Proposition 4.3.2.15]{HTT}, it will suffice to prove the following
assertions:
\begin{enumerate}
\item Every diagram $F\in\Fun_{N\pr{\Fin_{\ast}}}^{\cc}\pr{\cal I,\cal M^{\t}}$
can be extended to a diagram in $\Fun_{N\pr{\Fin_{\ast}}}^{\cc}\pr{\cal I^{\lcone},\cal M^{\t}}$.
\item Let $\overline{F}\in\Fun_{N\pr{\Fin_{\ast}}}\pr{\cal I^{\lcone},\cal M^{\t}}$
be a diagram whose restriction $F=\overline{F}\vert\cal I$ belongs
to $\Fun_{N\pr{\Fin_{\ast}}}^{\cc}\pr{\cal I,\cal M^{\t}}$. Then
$\overline{F}$ belongs to $\Fun_{N\pr{\Fin_{\ast}}}^{\cc}\pr{\cal I,\cal M^{\t}}$
if and only if it is a $q$-limit diagram.
\end{enumerate}

We start with assertion (1). Since $F$ carries each morphism to a
$q$-cocartesian morphism, we may assume without loss of generality
that the image of $F$ factors through the fiber $\cal M_{C}^{\t}$
for some $C\in\cal C$. Since $\cal M_{C}^{\t}$ is an $\infty$-operad,
we can extend $F$ to a diagram in $\Fun_{N\pr{\Fin_{\ast}}}^{\cc}\pr{\cal I^{\lcone},\cal M^{\t}}$,
as desired.

Next, for (2), suppose first that $\overline{F}$ belongs to $\Fun_{N\pr{\Fin_{\ast}}}^{\cc}\pr{\cal I,\cal M^{\t}}$.
We must show that $\overline{F}$ is a $q$-limit diagram. Since the
composite $\cal I^{\lcone}\xrightarrow{\overline{F}}\cal M^{\t}\to\cal C$
carries each morphism to an equivalence and $\cal I$ is weakly contractible,
it is a limit diagram (\cite[\href{https://kerodon.net/tag/03F3}{Tag 03F3}]{kerodon}).
It follows that $p\circ\overline{F}$ is a limit diagram relative
to the projection $\cal C\times N\pr{\Fin_{\ast}}\to N\pr{\Fin_{\ast}}$.
Thus, by \cite[Proposition 4.3.1.5]{HTT}, it suffices to show that
$\overline{F}$ is a $p$-limit diagram. By condition (b), $\overline{F}$
is a $p$-right Kan extension of $\overline{F}\vert\{\infty\}\star\pr{\{1,\dots,n\}}$,
which is a $p$-limit diagram (again by condition (b)). It follows
from \cite[Lemma 4.3.2.7]{HTT} that $\overline{F}$ is a $p$-limit
diagram, and the proof is complete.
\end{proof}

\section{\label{sec:env}Monoidal Envelopes of Families of \texorpdfstring{$\infty$}{Infinity}-Operads}

In \cite[Section 2.2.4]{HA}, Lurie introduces a universal procedure
to make an arbitrary $\infty$-operad into a symmetric monoidal $\infty$-category.
The resulting symmetric monoidal $\infty$-category is called the
\textbf{symmetric monoidal envelope} of the original $\infty$-operad.
In this section, we will generalize this construction to families
of $\infty$-operads. More precisely, we show that, given a family
of $\infty$-operads, we can take the symmetric monoidal envelope
of each fiber to obtain a family of symmetric monoidal $\infty$-categories.
We will also prove that the symmetric monoidal envelopes of families
of $\infty$-operads enjoy the expected universal property. 
\begin{rem}
In \cite{BHS22}, Barkan, Haugseng, and Steinebrunner develop a generalization
of symmetric monoidal envelopes that not only applies to $\infty$-operads
but also to a wide class of \textit{weak Segal fibrations} over \textit{algebraic
patterns} in the sense of Haugseng--Chu \cite{CH19}. There are various
similarities between their construction and our definition of symmetric
monoidal envelopes of families of $\infty$-operads (Definition \ref{def:env}),
but as far as the author can tell, our definition does not quite fit
into the framework of Barkan--Haugseng--Steinebrunner. The easiest
way to see this is that there is no guarantee that the symmetric monoidal
envelope of a family of $\infty$-operads in our sense is a cocartesian
fibration, while envelopes in \cite{BHS22} are automatically cocartesian
fibrations.
\end{rem}

We start with the definition of symmetric monoidal envelopes of families
of $\infty$-operads.
\begin{defn}
\label{def:env}Let $\cal C$ be an $\infty$-category, and let $p:\cal M^{\t}\to\cal C\times N\pr{\Fin_{\ast}}$
be a $\cal C$-family of $\infty$-operads. We define the\textbf{
symmetric monoidal envelope} $\Env\pr{\cal M}^{\t}$ of $\cal M^{\t}$
by the pullback
\[
\Env\pr{\cal M}^{\t}=\cal M^{\t}\times_{\Fun\pr{\{0\},N\pr{\Fin_{\ast}}}}\Fun^{\act}\pr{\Delta^{1},N\pr{\Fin_{\ast}}},
\]
where $\Fun^{\act}\pr{\Delta^{1},N\pr{\Fin_{\ast}}}\subset\Fun\pr{\Delta^{1},N\pr{\Fin_{\ast}}}$
denotes the full subcategory spanned by the active morphisms. We will
regard $\Env\pr{\cal M}^{\t}$ as equipped with the functor $\Env\pr{\cal M}^{\t}\to\cal C\times N\pr{\Fin_{\ast}}$
determined by the evaluation at the vertex $1\in\Delta^{1}$. We will
also regard $\Env\pr{\cal M}^{\t}$ as equipped with a (fully faithful)
functor $\cal M^{\t}\to\Env\pr{\cal M}^{\t}$ by using the diagonal
functor $N\pr{\Fin_{\ast}}\to\Fun^{\act}\pr{\Delta^{1},N\pr{\Fin_{\ast}}}$.
\end{defn}

\begin{rem}
Let $\cal C$ be an $\infty$-category, and let $p:\cal M^{\t}\to\cal C\times N\pr{\Fin_{\ast}}$
be a $\cal C$-family of $\infty$-operads. 
\begin{itemize}
\item The $\infty$-category $\Env\pr{\cal M}=\Env\pr{\cal M}_{\inp 1}^{\t}$
can be identified with the subcategory $\cal M_{\act}^{\t}\subset\cal M^{\t}$
of active morphisms. The intuition here is that an object $M\in\cal M_{\inp n}^{\t}$
is regarded as a ``formal tensor product'' of the objects $\rho_{!}^{i}\pr M\in\cal M$.
\item The objects of $\Env\pr{\cal M}^{\t}$ are the pairs $\pr{M,\alpha:p\pr M\to\inp k}$,
where $M\in\cal M^{\t}$ and $\alpha$ is an active morphism of $N\pr{\Fin_{\ast}}$.
\item Suppose that $\cal C=\Delta^{0}$, so that $p$ is just an $\infty$-operad.
Then $\Env\pr{\cal M}^{\t}$ is the symmetric monoidal envelope of
$\cal M^{\t}$ defined in \cite[Definition 2.2.4.1]{HTT}.
\item Suppose we are given an object $C\in\cal C$. There is an isomorphism
of simplicial sets
\[
\Env\pr{\cal M}_{C}^{\t}\cong\Env\pr{\cal M_{C}^{\t}}
\]
compatible with the projection to $N\pr{\Fin_{\ast}}$. We can informally
summarize this situation as follows: The $\infty$-category $\Env\pr{\cal M}^{\t}$
is obtained from $\cal M^{\t}$ by taking the symmetric monoidal envelopes
of its fibers.
\end{itemize}
\end{rem}

The goal of this section is to prove the following proposition, which
establishes the basic properties of symmetric monoidal envelopes of
families of $\infty$-operads. To state it, we must introduce a bit
of terminology.
\begin{defn}
If $\cal C$ is an $\infty$-category, we will say that a $\cal C$-family
of $\infty$-operads is a\textbf{ $\cal C$-family of symmetric monoidal
$\infty$-categories} if it admits cocartesian morphisms over the
morphisms in $\cal C^{\simeq}\times N\pr{\Fin_{\ast}}$.
\end{defn}

\begin{prop}
\label{prop:env}Let $\cal C$ be an $\infty$-category, and let $p:\cal M^{\t}\to\cal C\times N\pr{\Fin_{\ast}}$
be a $\cal C$-family of $\infty$-operads. Then:
\begin{enumerate}
\item The functor $q:\Env\pr{\cal M}^{\t}\to\cal C\times N\pr{\Fin_{\ast}}$
is a $\cal C$-family of symmetric monoidal $\infty$-categories.
\item A morphism of $\Env\pr{\cal M}^{\t}$ lying over a morphism of $\cal C^{\simeq}\times N\pr{\Fin_{\ast}}$
is $q$-cocartesian if and only if its image in $\cal M^{\t}$ is
inert.
\item For any $\cal C$-family $\cal N^{\t}\to\cal C\times N\pr{\Fin_{\ast}}$
of symmetric monoidal $\infty$-categories, the functor 
\[
\Fun_{\cal C\times N\pr{\Fin_{\ast}}}^{\t}\pr{\Env\pr{\cal M}^{\t},\cal N^{\t}}\to\Fun_{\cal C\times N\pr{\Fin_{\ast}}}^{\Alg}\pr{\cal M^{\t},\cal N^{\t}}
\]
is a categorical equivalence, where:
\begin{itemize}
\item $\Fun_{\cal C\times N\pr{\Fin_{\ast}}}^{\t}\pr{\Env\pr{\cal M}^{\t},\cal N^{\t}}\subset\Fun_{\cal C\times N\pr{\Fin_{\ast}}}\pr{\Env\pr{\cal M}^{\t},\cal N^{\t}}$
denotes the full subcategory spanned by the functors such that, for
each $C\in\cal C$, the map $\Env\pr{\cal M_{C}}^{\t}\to\cal N_{C}^{\t}$
is a symmetric monoidal functor; and
\item $\Fun_{\cal C\times N\pr{\Fin_{\ast}}}^{\Alg}\pr{\cal M^{\t},\cal N^{\t}}\subset\Fun_{\cal C\times N\pr{\Fin_{\ast}}}\pr{\cal M^{\t},\cal N^{\t}}$
denotes full subcategory spanned by the functors such that, for each
$C\in\cal C$, the functor $\cal M_{C}^{\t}\to\cal N_{C}^{\t}$ is
a map of $\infty$-operads.
\end{itemize}
\end{enumerate}
\end{prop}

The remainder of this section is devoted to the proof of Proposition
\ref{prop:env}. The bulk of the proof will be in describing a procedure
to adjoin cocartesian morphisms to categorical fibrations (Proposition
\ref{prop:BHS22}), which we now discuss.
\begin{notation}
Let $p:\cal E\to\cal B$ and $p':\cal E'\to\cal B$ be inner fibrations
of $\infty$-categories, and let $\cal B_{0}\subset\cal B$ be a subcategory.
Suppose that $p$ and $p'$ admit cocartesian morphisms over the morphisms
in $\cal B_{0}$. We let $\Fun_{\cal B}^{\cal B_{0}\-\cc}\pr{\cal E,\cal E'}$
denote the full subcategory of $\Fun_{\cal B}\pr{\cal E,\cal E'}$
spanned by those functors $\cal E\to\cal E'$ that carry $p$-cocartesian
morphisms lying over morphisms in $\cal B_{0}$ to $p'$-cocartesian
morphisms.
\end{notation}

\begin{notation}
Let $\cal C$ be an $\infty$-category equipped with a factorization
system $\pr{\cal C_{L},\cal C_{R}}$. We let $\Fun^{R}\pr{\Delta^{1},\cal C}\subset\Fun\pr{\Delta^{1},\cal C}$
denote the full subcategory spanned by the morphisms in $\cal C_{R}$.
\end{notation}

\begin{prop}
\label{prop:BHS22}Let $\cal B$ be an $\infty$-category equipped
with a factorization system $\pr{\cal B_{L},\cal B_{R}}$, let $\cal C$
be an $\infty$-category, and let $p:\cal E\to\cal C\times\cal B$
be a categorical fibration that admits cocartesian morphisms over
the morphisms in $\cal C^{\simeq}\times\cal B_{L}$. Set $\cal X=\cal E\times_{\Fun\pr{\{0\},\cal B}}\Fun^{R}\pr{\Delta^{1},\cal B}$.
Then:
\begin{enumerate}
\item The functor $p':\cal X\to\cal C\times\Fun\pr{\{1\},\cal B}$ admits
cocartesian morphisms over the morphisms in $\cal C^{\simeq}\times\cal B$,
and a morphism of $\cal X$ lying over a morphism of $\cal C^{\simeq}\times\cal B$
is $p'$-cocartesian if and only if its image in $\cal E$ is a $p$-cocartesian
morphism lying over a morphism in $\cal C^{\simeq}\times\cal B_{L}$.
\item For every categorical fibration $q:\cal F\to\cal C\times\cal B$ that
admits cocartesian morphisms over the morphisms in $\cal C^{\simeq}\times\cal B$,
the functor $\cal E\to\cal X$ (induced by the diagonal functor $\cal B\to\Fun^{R}\pr{\Delta^{1},\cal B}$)
induces a categorical equivalence
\[
\Fun_{\cal C\times\cal B}^{\cal C^{\simeq}\times\cal B\-\cc}\pr{\cal X,\cal F}\xrightarrow{\simeq}\Fun_{\cal C\times\cal B}^{\cal C^{\simeq}\times\cal B_{L}\-\cc}\pr{\cal E,\cal F}.
\]
\end{enumerate}
\end{prop}

\begin{rem}
In the case where $\cal C=\Delta^{0}$, Proposition \ref{prop:BHS22}
gives a formula for the ``free cocartesian fibration'' on a categorical
fibration satisfying certain conditions. This formula for free cocartesian
fibrations has appeared previously in various places, such as in \cite[Proposition A.0.1]{AMGR24}
and \cite[Proposition 2.2.4]{BHS22}. In the very special case where
$\cal C=\Delta^{0}$ and $\cal B=N\pr{\Fin_{\ast}}$ and $p$ is a
fibration of $\infty$-operads, the essence of the proposition appears
in \cite[$\S$2.2.4]{HA}, where Lurie defined symmetric monoidal envelopes. 

The proof of Proposition \ref{prop:BHS22} we present below is a synthesis
of the arguments in \cite{AMGR24,BHS22,HA}. Unfortunately, the proofs
of Proposition \ref{prop:BHS22} (for the case where $\cal C=\Delta^{0}$)
presented in \cite{AMGR24} and \cite{BHS22} do not seem to stand
as they are.\footnote{The authors of these papers promised (via private communications)
that the problem will be fixed in future revisions.} (They claim that the inverse equivalence of the functor appearing
in part (2) is given by a precomposition by a certain functor $\cal X\to\cal E$,
but the latter functor is not compatible with the projection to $\cal B$.)
Because of this, we will refrain from referring to the results of
\cite{AMGR24, BHS22} in what follows, at the cost of being somewhat
repetitive.
\end{rem}

The proof of Proposition \ref{prop:BHS22} relies on a few lemmas.

\begin{lem}
\label{lem:fact}Let $\cal B$ be an $\infty$-category, and let $\pr{\cal B_{L},\cal B_{R}}$
be a factorization system on $\cal B$. Then:
\begin{enumerate}
\item The functor $q:\Fun^{R}\pr{\Delta^{1},\cal B}\to\Fun\pr{\{1\},\cal B}$
is a cocartesian fibration.
\item A morphism of $\Fun^{R}\pr{\Delta^{1},\cal B}$ is $q$-cocartesian
if and only if its image in $\Fun\pr{\{0\},\cal B}$ belongs to $\cal B_{L}$.
\end{enumerate}
\end{lem}

\begin{proof}
Recall that the functor $p:\Fun\pr{\Delta^{1},\cal B}\to\Fun\pr{\{1\},\cal B}$
is a cocartesian fibration, and the $p$-cocartesian morphsims are
the morphisms whose images in $\Fun\pr{\{0\},\cal B}$ are equivalences
\cite[\href{https://kerodon.net/tag/0478}{Tag 0478}]{kerodon}. We
will apply Lemma \ref{lem:2.2.4.11} to $p$ and the full subcategory
$\Fun^{R}\pr{\Delta^{1},\cal B}$. Unwinding the definitions, it will
suffice to prove the following:
\begin{itemize}
\item [($\ast$)]Suppose we are given a $2$-simplex $\Delta^{2}\to\cal B$
depicted as % https://q.uiver.app/#q=WzAsMyxbMCwwLCJYIl0sWzIsMCwiWCciXSxbMSwxLCJCIl0sWzAsMSwibCJdLFsxLDIsInIiXSxbMCwyLCJmIiwyXV0=
\[\begin{tikzcd}
	X && {X'} \\
	& B
	\arrow["l", from=1-1, to=1-3]
	\arrow["f"', from=1-1, to=2-2]
	\arrow["r", from=1-3, to=2-2]
\end{tikzcd}\]where $l\in\cal B_{L}$ and $r\in\cal B_{R}$. Then for each map $r':Y\to B$
in $\cal B_{R}$, the map
\[
\Map_{\cal B_{/B}}\pr{X',Y}\to\Map_{\cal B_{/B}}\pr{X,Y}
\]
is an isomorphism of homotopy types. 
\end{itemize}
But assertion ($\ast$) is just a paraphrase of the fact that $l$
is left orthogonal to $r'$ (\cite[Remark 5.2.8.3]{HTT}), so there
is nothing to prove.
\end{proof}
\begin{lem}
\label{lem:arr}Let $\cal C$ be an $\infty$-category, and let $f:X\to Y$
be a morphism of $\cal C$. Let $\Fun^{\simeq}\pr{\Delta^{1},\cal C}\subset\Fun\pr{\Delta^{1},\cal C}$
denote the full subcategory spanned by the equivalences of $\cal C$.
The $\infty$-category $\Fun^{\simeq}\pr{\Delta^{1},\cal C}\times_{\Fun\pr{\Delta^{1},\cal C}}\Fun\pr{\Delta^{1},\cal C}_{/f}$
has a final object, given by the morphism $\sigma:\id_{X}\to f$ depicted
as % https://q.uiver.app/#q=WzAsNCxbMCwwLCJYIl0sWzAsMSwiWCJdLFsxLDAsIlgiXSxbMSwxLCJZIl0sWzAsMSwiZXF1YWwiLDJdLFswLDIsImVxdWFsIl0sWzIsMywiZiJdLFsxLDMsImYiLDJdXQ==
\[\begin{tikzcd}
	X & X \\
	X & Y
	\arrow[equal, from=1-1, to=1-2]
	\arrow[equal, from=1-1, to=2-1]
	\arrow["f", from=1-2, to=2-2]
	\arrow["f"', from=2-1, to=2-2]
\end{tikzcd}\]
\end{lem}

\begin{proof}
By \cite[\href{https://kerodon.net/tag/03XA}{Tag 03XA}]{kerodon},
it suffices to show that, for each equivalence $e:C\xrightarrow{\simeq}C'$
in $\cal C$, composition with $\sigma$ induces an isomorphism of
homotopy types
\[
\Map_{\Fun\pr{\Delta^{1},\cal C}}\pr{e,\id_{X}}\xrightarrow{\simeq}\Map_{\Fun\pr{\Delta^{1},\cal C}}\pr{e,f}.
\]
This is immediate from \cite[Proposition 4.3.2.15]{HTT}, which says
that the maps $\Map_{\Fun\pr{\Delta^{1},\cal C}}\pr{e,\id_{X}}\to\Map_{\cal C}\pr{C,X}$
and $\Map_{\Fun\pr{\Delta^{1},\cal C}}\pr{e,f}\to\Map_{\cal C}\pr{C,X}$
are isomorphisms of homotopy types.
\end{proof}
\begin{proof}
[Proof of Proposition \ref{prop:BHS22}]For part (1), factor $p'$
as
\[
\cal X\xrightarrow{\phi}\cal C\times\Fun^{R}\pr{\Delta^{1},\cal B}\xrightarrow{\psi}\cal C\times\Fun\pr{\{1\},\cal B}.
\]
By Lemma \ref{lem:fact}, the functor $\psi$ is a cocartesian fibration
whose cocartesian morphisms are the morphisms whose images in $\Fun\pr{\{0\},\cal B}$
belong to $\cal B_{L}$. Combining this observation with the hypothesis
on $p$, we deduce that $\phi$ (, which is a pullback of $p$) admits
cocartesian morphisms over $\psi$-cocartesian morphisms lying over
the morphisms in $\cal C^{\simeq}\times\cal B$. Hence $p'$ admits
cocartesian morphisms over the morphisms in $\cal C^{\simeq}\times\cal B$.
Moreover, this argument shows that a morphism of $\cal X$ lying over
$\cal C^{\simeq}\times\cal B$ is $p'$-cocartesian if and only if
its image in $\cal E$ is a $p$-cocartesian morphism lying over a
morphism of $\cal C^{\simeq}\times\cal B_{L}$. Thus we have proved
(1). 

For (2), let $\Fun^{\simeq}\pr{\Delta^{1},\cal B}\subset\Fun\pr{\Delta^{1},\cal B}$
denote the full subcategory spanned by the equivalences of $\cal B$,
and set $\widetilde{\cal E}=\cal E\times_{\Fun\pr{\{0\},\cal B}}\Fun^{\simeq}\pr{\Delta^{1},\cal B}$.
The functor $\cal E\to\widetilde{\cal E}$ is a categorical equivalence,
so it suffices to show that the inclusion $\widetilde{\cal E}\subset\cal X$
induces a categorical equivalence
\[
\Fun_{\cal C\times\cal B}^{\cal C^{\simeq}\times\cal B\-\cc}\pr{\cal X,\cal F}\xrightarrow{\simeq}\Fun_{\cal C\times\cal B}^{\cal C^{\simeq}\times\cal B_{L}\-\cc}\pr{\widetilde{\cal E},\cal F}.
\]
For this, by \cite[Proposition 4.3.2.15]{HTT}, it suffices to prove
the following pair of assertions:

\begin{enumerate}[label=(\alph*)]

\item Every functor $F_{0}\in\Fun_{\cal C\times\cal B}^{\cal C^{\simeq}\times\cal B_{L}\-\cc}\pr{\widetilde{\cal E},\cal F}$
admits a $q$-left Kan extension $F\in\Fun_{\cal C\times\cal B}\pr{\cal X,\cal F}$.

\item Let $F\in\Fun_{\cal C\times\cal B}\pr{\cal X,\cal F}$ be a
functor whose restriction $F_{0}=F\vert\widetilde{\cal E}$ belongs
to $\Fun_{\cal C\times\cal B}^{\cal C^{\simeq}\times\cal B_{L}\-\cc}\pr{\widetilde{\cal E},\cal F}$.
Then $F$ belongs to $\Fun_{\cal C\times\cal B}^{\cal C^{\simeq}\times\cal B\-\cc}\pr{\cal X,\cal F}$
if and only if it is a $q$-left Kan extension of $F_{0}$.

\end{enumerate}

We begin with (a). Let $\pr{E,p\pr E\to B}$ be an arbitrary object
of $\cal X$ with image $\pr{C,B}\in\cal C\times\cal B$. Using Lemma
\ref{lem:arr}, we deduce that the $\infty$-category $\widetilde{\cal E}\times_{\cal X}\cal X_{/\pr{E,p\pr E\to B}}$
admits a final object, depicted as % https://q.uiver.app/#q=WzAsNixbMCwwLCJFIl0sWzAsMSwiRSJdLFsxLDAsInAoRSkiXSxbMSwxLCJwKEUpIl0sWzIsMCwicChFKSJdLFsyLDEsIkIiXSxbMCwxLCJlcXVhbCIsMl0sWzIsMywiZXF1YWwiLDJdLFsyLDQsImVxdWFsIl0sWzMsNSwiZiIsMl0sWzQsNSwiZiJdXQ==
\[\begin{tikzcd}
	E & {p(E)} & {p(E)} \\
	E & {p(E)} & B.
	\arrow[equal, from=1-1, to=2-1]
	\arrow[equal, from=1-2, to=1-3]
	\arrow[equal, from=1-2, to=2-2]
	\arrow["f", from=1-3, to=2-3]
	\arrow["f"', from=2-2, to=2-3]
\end{tikzcd}\]Thus, to prove (a), it suffices to show that for each object $E\in\cal E$
and each morphism $f:p\pr E\to B$ of $\cal B_{R}$, there is a $q$-cocartesian
morphism that lifts the morphism $\pr{\id_{C},f}$ in $\cal C\times\cal B$
and has source $F\pr{E,\id_{p\pr E}}$. This follows from the hypothesis
that $q$ admits cocartesian morphisms over the morphisms in $\cal C^{\simeq}\times\cal B$.

Next, we prove (b). By the argument of (a), it suffices to prove that
the following conditions are equivalent:

\begin{enumerate}[label=(b-\roman*)]

\item The functor $F$ carries $p'$-cocartesian morphisms over the
morphisms in $\cal C^{\simeq}\times\cal B$ to $q$-cocartesian morphisms.

\item For each object $E\in\cal E$ and each morphism $f:p\pr E\to B$
in $\cal B_{R}$, the morphism $F\pr{E,\id_{p\pr E}}\to F\pr{E,f}$
is $q$-cocartesian.

\end{enumerate}

The implication (b-i)$\implies$(b-ii) follows from (1). For the converse,
suppose that condition (b-ii) is satisfied. Consider an arbitrary
$p'$-cocartesian morphism $\alpha$ lying over a morphism in $\cal C^{\simeq}\times\cal B$.
We depicted $\alpha$ as% https://q.uiver.app/#q=WzAsNixbMSwwLCJwKEUpIl0sWzIsMCwiQiJdLFsxLDEsInAoRScpIl0sWzIsMSwiQiciXSxbMCwwLCJFIl0sWzAsMSwiRSciXSxbMCwxLCJyIl0sWzAsMiwibCIsMl0sWzIsMywiciciLDJdLFsxLDMsInoiXSxbNCw1LCJcXHdpZGV0aWxkZXtsfSIsMl1d
\[\begin{tikzcd}
	E & {p(E)} & B \\
	{E'} & {p(E')} & {B',}
	\arrow["{\widetilde{l}}"', from=1-1, to=2-1]
	\arrow["r", from=1-2, to=1-3]
	\arrow["l"', from=1-2, to=2-2]
	\arrow["z", from=1-3, to=2-3]
	\arrow["{r'}"', from=2-2, to=2-3]
\end{tikzcd}\]where $l$ belongs to $\cal B_{L}$, $r$ and $r'$ belongs to $\cal B_{R}$,
$\widetilde{l}$ is $p$-cocartesian, and the image of $\widetilde{l}$
in $\cal C$ is an equivalence. We wish to show that $F\pr{\alpha}$
is $p$-cocartesian. We consider the diagram $\Delta^{1}\times\Delta^{1}\to\cal X$
depicted as % https://q.uiver.app/#q=WzAsMTIsWzMsMCwicChFKSJdLFs1LDAsIkIiXSxbMywyLCJwKEUnKSJdLFs1LDIsIkInLiJdLFsxLDAsIkUiXSxbMSwyLCJFJyJdLFswLDEsIkUiXSxbNCwxLCJwKEUpIl0sWzIsMSwicChFKSJdLFs0LDMsInAoRScpIl0sWzIsMywicChFJykiXSxbMCwzLCJFJyJdLFswLDEsInIiXSxbMCwyLCJsIiwwLHsibGFiZWxfcG9zaXRpb24iOjcwfV0sWzIsMywiciciLDAseyJsYWJlbF9wb3NpdGlvbiI6NzB9XSxbMSwzLCJ6Il0sWzQsNSwiXFx3aWRldGlsZGV7bH0iLDJdLFs2LDQsImVxdWFsIiwyXSxbNywxLCJyIl0sWzgsMCwiZXF1YWwiXSxbOCw3LCJlcXVhbCJdLFs5LDMsInInIiwyXSxbMTAsOSwiZXF1YWwiLDJdLFsxMCwyLCJlcXVhbCJdLFs4LDEwLCJsIiwyLHsibGFiZWxfcG9zaXRpb24iOjcwfV0sWzYsMTEsIlxcd2lkZXRpbGRle2x9IiwyXSxbMTEsNSwiZXF1YWwiLDJdLFs3LDksImwiLDAseyJsYWJlbF9wb3NpdGlvbiI6NzB9XV0=
\[\begin{tikzcd}
	& E && {p(E)} && B \\
	E && {p(E)} && {p(E)} \\
	& {E'} && {p(E')} && {B'.} \\
	{E'} && {p(E')} && {p(E')}
	\arrow["{\widetilde{l}}"', from=1-2, to=3-2]
	\arrow["r", from=1-4, to=1-6]
	\arrow["l"{pos=0.7}, from=1-4, to=3-4]
	\arrow["z", from=1-6, to=3-6]
	\arrow[equal, from=2-1, to=1-2]
	\arrow["{\widetilde{l}}"', from=2-1, to=4-1]
	\arrow[equal, from=2-3, to=1-4]
	\arrow[equal, from=2-3, to=2-5]
	\arrow["l"'{pos=0.7}, from=2-3, to=4-3]
	\arrow["r", from=2-5, to=1-6]
	\arrow["l"{pos=0.7}, from=2-5, to=4-5]
	\arrow["{r'}"{pos=0.7}, from=3-4, to=3-6]
	\arrow[equal, from=4-1, to=3-2]
	\arrow[equal, from=4-3, to=3-4]
	\arrow[equal, from=4-3, to=4-5]
	\arrow["{r'}"', from=4-5, to=3-6]
\end{tikzcd}\]By hypothesis, the morphisms $F\pr{E,\id_{p\pr E}}\to F\pr{E,r}$
and $F\pr{E',\id_{p\pr{E'}}}\to F\pr{E',r'}$ are $q$-cocartesian.
Since $F_{0}$ preserves cocartesian morphisms over morphisms in $\cal C^{\simeq}\times\cal B_{L}$,
the morphism $F\pr{E,\id_{p\pr E}}\to F\pr{E',\id_{p\pr{E'}}}$ is
$q$-cocartesian. Hence the morphism $F\pr{\alpha}:F\pr{E,r}\to F\pr{E,r'}$
is also $q$-cocartesian (\cite[Proposition 2.4.1.7]{HTT}), and the
proof is complete.
\end{proof}
We now arrive at the proof of Proposition \ref{prop:env}:
\begin{proof}
[Proof of Proposition \ref{prop:env}]Assertions (2) and (3) follow
from Proposition \ref{prop:BHS22}. We will complete the proof by
showing (1). We must verify the following conditions:

\begin{enumerate}[label=(\alph*)]

\item The functor $q$ admits cocartesian morphisms over the morphisms
in $\cal C^{\simeq}\times\cal B$.

\item Let $E\in\Env\pr{\cal M}^{\t}$ be an object with image $\pr{C,\inp n}\in\cal C\times N\pr{\Fin_{\ast}}$,
and let $\{f_{i}:E\to E_{i}\}_{1\leq i\leq n}$ be inert maps of $\Env\pr{\cal M_{C}}^{\t}$
lying over the maps $\{\rho^{i}:\inp n\to\inp 1\}_{1\leq i\leq n}$.
Then the maps $\{f_{i}\}_{1\leq i\leq n}$ form a $q$-limit diagram.
(If $n=0$, we interpret this as saying that every object of $\Env\pr{\cal M}^{\t}$
is $q$-terminal.)

\item Given an integer $n\geq0$, and objects $C\in\cal C$ and $E_{1},\dots,E_{n}\in\Env\pr{\cal M}_{C}$,
there is an object $E\in\Env\pr{\cal M}_{\pr{C,\inp n}}^{\t}$ that
admits $q$-cocartesian morphisms $\{E\to E_{i}\}_{1\leq i\leq n}$.
(If $n=0$, we interpret this as saying that $\cal M_{\pr{C,\inp 0}}^{\t}$
is non-empty.)

\end{enumerate}

Condition (a) follows from Proposition \ref{prop:BHS22}. Condition
(c) follows from (2) and the fact that $\cal M^{\t}$ is a $\cal C$-family
of $\infty$-operads. We will complete the proof by showing (b). We
factor the map $q$ as
\[
\Env\pr{\cal M}^{\t}\xrightarrow{r}\cal C\times\Fun^{\act}\pr{\Delta^{1},N\pr{\Fin_{\ast}}}\xrightarrow{s}\cal C\times\Fun\pr{\{1\},N\pr{\Fin_{\ast}}}.
\]
We observe that the functor $\Fun^{\act}\pr{\Delta^{1},N\pr{\Fin_{\ast}}}\to\Fun\pr{\{1\},N\pr{\Fin_{\ast}}}$
is a symmetric monoidal $\infty$-category, and a morphism of $\Fun^{\act}\pr{\Delta^{1},N\pr{\Fin_{\ast}}}$
is cocartesian if and only if its image in $\Fun\pr{\{0\},N\pr{\Fin_{\ast}}}$
is inert. It follows that $s$ is a $\cal C$-family of $\infty$-operads
and is also a cocartesian fibration, and a morphism of $\cal C\times\Fun^{\act}\pr{\Delta^{1},N\pr{\Fin_{\ast}}}$
is $s$-cocartesian if and only if its image in $\Fun\pr{\{0\},N\pr{\Fin_{\ast}}}$
is inert. Hence the maps $\{r\pr{f_{i}}\}_{1\leq i\leq n}$ form an
$s$-limit diagram. To complete the proof, therefore, it suffices
to show that $\{f_{i}\}_{1\leq i\leq n}$ is an $r$-limit diagram.
For this, it suffices to show that the image of $\{f_{i}\}_{1\le i\leq n}$
in $\cal M^{\t}$ is a $p$-limit diagram. This follows from (2) and
the hypothesis that $p$ is a $\cal C$-familiy of $\infty$-operads.
The proof is now complete.
\end{proof}

\section{\label{sec:direct_sum}The Fiberwise Direct Sum Functor}

Let $\cal O^{\t}$ be an $\infty$-operad. Given objects $X_{1},\dots,X_{n}\in\cal O$,
we can find an object $X\in\cal O_{\inp n}^{\t}$ which admits a $p$-cocartesian
morphism $X\to X_{i}$ over $\rho^{i}:\inp n\to\inp 1$ for each $i$.
Such an object is denoted by $X_{1}\oplus\cdots\oplus X_{n}$ \cite[Remark 2.1.1.15]{HA}.
By inspection, the object $X_{1}\oplus\cdots\oplus X_{n}$ is equivalent
to the image of the object $\pr{X_{1},\dots,X_{n}}\in\pr{\cal O_{\act}^{\t}}^{n}$
under the tensor product of $\Env\pr{\cal O}=\cal O_{\act}^{\t}$.
So the operation $\oplus$ can be made functorial using monoidal envelopes.
In fact, we can do it fiberwise:
\begin{defn}
Let $\cal C$ be an $\infty$-category, let $\cal M^{\t}\to\cal C\times N\pr{\Fin_{\ast}}$
be a $\cal C$-family of $\infty$-operads, and let $n\geq1$ be a
positive integer. We define a functor
\[
\bigoplus_{i=1}^{n}:\pr{\cal M_{\act}^{\t}}^{n}\times_{\cal C^{n}}\cal C=\cal M_{\act}^{\t}\times_{\cal C}\cdots\times_{\cal C}\cal M_{\act}^{\t}\to\cal M_{\act}^{\t}
\]
over $\cal C$, well-defined up to natural equivalence over $\cal C$,
as follows: According to Proposition \ref{prop:sec1_main}, the functors
$\rho_{!}^{i}:\Env\pr{\cal M}_{\inp n}^{\t}\to\Env\pr{\cal M}_{\inp 1}^{\t}=\cal M_{\act}^{\t}$
induce a categorical equivalence
\[
\Env\pr{\cal M}_{\inp n}^{\t}\xrightarrow{\simeq}\pr{\cal M_{\act}^{\t}}^{n}\times_{\cal C^{n}}\cal C
\]
over $\cal C$. The functors $\Env\pr{\cal M}_{\inp n}^{\t}\to\cal C$
and $\pr{\cal M_{\act}^{\t}}^{n}\times_{\cal C^{n}}\cal C\to\cal C$
are categorical fibrations, so the above equivalence admits an inverse
equivalence
\[
\pr{\cal M_{\act}^{\t}}^{n}\times_{\cal C^{n}}\cal C\xrightarrow{\simeq}\Env\pr{\cal M}_{\inp n}^{\t}
\]
over $\cal C$. We define $\bigoplus_{i=1}^{n}$ to be the composite
\[
\pr{\cal M_{\act}^{\t}}^{n}\times_{\cal C^{n}}\cal C\xrightarrow{\simeq}\Env\pr{\cal M}_{\inp n}^{\t}\xrightarrow{\text{forget}}\cal M_{\act}^{\t}.
\]
\end{defn}

In this section, we will prove two important properties of the fiberwise
direct sum functor: Its universal property (Subsection \ref{subsec:Universal-Property-of})
and its interaction with slices (Subsection \ref{subsec:dir_sum_slice}).

\subsection{\label{subsec:Universal-Property-of}Universal Property of the Direct
Sum Functor }

In this subsection, we will characterize the direct sum functor by
a certain universal property (Corollary \ref{cor:oplus_p-limit}).
\begin{prop}
\label{prop:inverting_weak_equiv_of_pairs}Let $\cal M$ be a model
category. Suppose we are given a commutative diagram % https://q.uiver.app/?q=WzAsNCxbMCwwLCJFIl0sWzEsMCwiRSciXSxbMSwxLCJCJyJdLFswLDEsIkIiXSxbMCwxLCJmIl0sWzEsMiwicCciLDAseyJzdHlsZSI6eyJoZWFkIjp7Im5hbWUiOiJlcGkifX19XSxbMCwzLCJwIiwyLHsic3R5bGUiOnsiaGVhZCI6eyJuYW1lIjoiZXBpIn19fV0sWzMsMiwicSIsMix7InN0eWxlIjp7ImhlYWQiOnsibmFtZSI6ImVwaSJ9fX1dLFszLDIsIlxcc2ltZXEiXSxbMCwxLCJcXHNpbWVxIiwyXV0=
\[\begin{tikzcd}
	E & {E'} \\
	B & {B'}
	\arrow["f", from=1-1, to=1-2]
	\arrow["{p'}", two heads, from=1-2, to=2-2]
	\arrow["p"', two heads, from=1-1, to=2-1]
	\arrow["q"', two heads, from=2-1, to=2-2]
	\arrow["\simeq", from=2-1, to=2-2]
	\arrow["\simeq"', from=1-1, to=1-2]
\end{tikzcd}\]in $\cal M$. Assume the following:
\begin{enumerate}
\item The maps $p,p',q$ are fibrations.
\item The maps $f$ and $q$ are weak equivalences.
\item The object $E'$ is cofibrant. 
\item The map $q$ has a section $s:B'\to B$.
\end{enumerate}
Then there is a map $g:E'\to E$ rendering the diagram % https://q.uiver.app/?q=WzAsNCxbMCwwLCJFJyJdLFswLDEsIkInIl0sWzEsMCwiRSJdLFsxLDEsIkIiXSxbMCwxLCJwJyJdLFswLDIsImciXSxbMSwzLCJzIiwyXSxbMiwzLCJwIl1d
\[\begin{tikzcd}
	{E'} & E \\
	{B'} & B
	\arrow["{p'}", from=1-1, to=2-1]
	\arrow["g", from=1-1, to=1-2]
	\arrow["s"', from=2-1, to=2-2]
	\arrow["p", from=1-2, to=2-2]
\end{tikzcd}\]commutative, such that the composite $fg:E'\to E'$ is homotopic to
the identity in $\cal M_{/B'}$.
\end{prop}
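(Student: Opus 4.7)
The plan is to first find a map $g_0: E' \to E$ in the overcategory $\cal M_{/B'}$ that is a homotopy section of $f$ over $B'$, and then deform $g_0$ to a map $g$ that satisfies the strict equation $pg = sp'$ while retaining the homotopy condition.

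Throughout I work in the overcategory $\cal M_{/B'}$ equipped with its induced model structure. In this category, $E'$ (with structure map $p'$) is cofibrant-fibrant by hypotheses (1) and (3), $E$ (with structure map $qp$) is fibrant since composites of fibrations are fibrations, and the commutativity of the given square identifies $f$ as a morphism $E \to E'$ in $\cal M_{/B'}$ that remains a weak equivalence. Note also that $s$ is a weak equivalence by two-out-of-three applied to $qs = \id_{B'}$, so $q$ is an acyclic fibration.

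Next, I invoke the standard fact that a weak equivalence between fibrant objects whose target is cofibrant admits a homotopy section. Concretely, factor $f$ in $\cal M_{/B'}$ as $E \xrightarrow{\iota} Z \xrightarrow{\pi} E'$ with $\iota$ an acyclic cofibration and $\pi$ a fibration; then $\pi$ is acyclic by two-out-of-three. Cofibrancy of $E'$ yields a section $\sigma: E' \to Z$ of $\pi$, and fibrancy of $E$ together with $\iota$ being an acyclic cofibration yields a retraction $r: Z \to E$ (by lifting $\id_E$ along $\iota$ against the fibration $qp$). A path-object argument in $\cal M_{/B'}$ (lifting $(\id_Z, \iota r): Z \to Z \times_{B'} Z$ along a path-object fibration against $\iota$) shows $\iota r \simeq \id_Z$ in $\cal M_{/B'}$, so $g_0 := r\sigma$ satisfies $fg_0 = \pi \iota r \sigma \simeq \pi \sigma = \id_{E'}$ in $\cal M_{/B'}$.

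It remains to rectify $g_0$ so that $pg = sp'$ holds strictly. Both $pg_0$ and $sp'$ are lifts of $p': E' \to B'$ along $q$, since $q(pg_0) = p'(fg_0) = p'$ (using that $fg_0$ lies over $B'$) and $q(sp') = p'$. Cofibrancy of $E'$ combined with $q$ being an acyclic fibration produces a homotopy $K: E' \otimes I \to B$ over $B'$ from $pg_0$ to $sp'$. Using that $p$ is a fibration and $i_0: E' \hookrightarrow E' \otimes I$ is an acyclic cofibration, I lift $K$ starting at $g_0$ to obtain $\tilde K: E' \otimes I \to E$, and set $g := \tilde K i_1$; then $pg = Ki_1 = sp'$. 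Since $qp \tilde K = qK$ factors through the projection $E' \otimes I \to E'$, the homotopy $\tilde K$ lies in $\cal M_{/B'}$, whence $fg \simeq fg_0 \simeq \id_{E'}$ there, as required.

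The main obstacle is the construction of $g_0$, which relies on the slice model structure and the standard but somewhat intricate argument combining the factorization axiom, cofibrancy of $E'$, fibrancy of $E$ in $\cal M_{/B'}$, and a path-object comparison. Once $g_0$ is in hand, the rectification step follows routinely from the homotopy lifting property of $p$ together with the cofibrancy of $E'$, both of which ensure that lifts of $p'$ along the acyclic fibration $q$ are unique up to homotopy over $B'$ and that such homotopies can be transported to $E$.
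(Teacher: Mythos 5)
Your proof is correct, but it takes a genuinely different route from the paper's. The paper works entirely in $\cal M_{/B'}$ and invokes \cite[Proposition A.2.3.1]{HTT} in a single step, applied to $p:E\to B$ viewed as a fibration between fibrant objects of $\cal M_{/B'}$ with cofibrant source $E'$: this simultaneously produces $g$ satisfying the strict equation $pg=sp'$ and the homotopy condition $[g][f]=[\id_E]$ in $\ho(\cal M_{/B'})$. It then gets $[f][g]=[\id_{E'}]$ by uniqueness of inverses, and finally converts the homotopy-category identity into an honest homotopy using that $E'$ is fibrant-cofibrant in $\cal M_{/B'}$. You instead build a homotopy section of $f$ from scratch — factoring $f$ as an acyclic cofibration followed by an acyclic fibration, extracting a section $\sigma$ and retraction $r$, and running a path-object argument to show $\iota r\simeq\id_Z$ — and only afterwards rectify the resulting $g_0$ to satisfy $pg=sp'$ strictly, using uniqueness-up-to-homotopy of lifts along the acyclic fibration $q$ together with the homotopy lifting property of $p$. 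In effect you decompose the problem as (homotopy condition first, strictness second), whereas the paper does (strictness plus a dual homotopy condition at once, then invert $[f]$). Your version is self-contained and avoids the external citation at the cost of length; the paper's is shorter but leans on the black box. One small point worth noting explicitly in your write-up: when you pass from the right homotopy $\iota r\simeq\id_Z$ to $\pi\iota r\sigma\simeq\pi\sigma$, you are implicitly using that $E'$ is cofibrant and the targets are fibrant in $\cal M_{/B'}$ so that the homotopy relation is well-behaved under pre- and post-composition; this is standard but deserves a word since $Z$ need not be cofibrant.
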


\begin{proof}
Consider the diagram % https://q.uiver.app/?q=WzAsNixbMiwwLCJFIl0sWzIsMSwiQiJdLFsxLDEsIkInIl0sWzEsMCwiRSciXSxbMCwwLCJFIl0sWzAsMSwiQiJdLFswLDEsInAiLDJdLFsyLDEsInMiLDJdLFszLDIsInAnIiwyXSxbNSwyLCJxIiwyXSxbNCw1LCJwIiwyXSxbNCwzLCJmIl0sWzMsMCwiIiwyLHsic3R5bGUiOnsiYm9keSI6eyJuYW1lIjoiZGFzaGVkIn19fV1d
\[\begin{tikzcd}
	E & {E'} & E \\
	B & {B'} & B
	\arrow["p"', from=1-3, to=2-3]
	\arrow["s"', from=2-2, to=2-3]
	\arrow["{p'}"', from=1-2, to=2-2]
	\arrow["q"', from=2-1, to=2-2]
	\arrow["p"', from=1-1, to=2-1]
	\arrow["f", from=1-1, to=1-2]
	\arrow[dashed, from=1-2, to=1-3]
\end{tikzcd}\]in $\cal M_{/B'}$. Since $p$ is a fibration between fibrant objects
of $\cal M_{/B'}$ and $E'$ is cofibrant, the dual of \cite[Proposition A.2.3.1]{HTT}
shows that there is a map $g:E'\to E$ rendering the diagram commutative,
such that $[g][f]=[\id_{E}]$ in $\ho\pr{\cal M_{/B'}}$. Since $[f]$
is an isomorphism in $\ho\pr{\cal M_{/B'}}$, the uniqueness of inverses
implies that $[f][g]=[\id_{E'}]$ in $\ho\pr{\cal M_{/B'}}$. Since
$E'$ is fibrant-cofibrant in $\cal M_{/B'}$, we deduce that $fg$
is homotopic to the identity in $\cal M_{/B'}$.
\end{proof}
\begin{defn}
Let $n\geq1$. Given integers $m_{1},\dots,m_{n}\geq0$, we shall
identify the pointed set $\Vee_{i=1}^{n}\inp{m_{i}}$ with the set
$\inp{m_{1}+\cdots+m_{n}}$ via the pointed map
\[
\Vee_{i=1}^{n}\inp{m_{i}}\to\inp{m_{1}+\cdots+m_{n}}
\]
which maps $k\in\inp{m_{i}}^{\circ}$ to $\sum_{j<i}m_{j}+k$. For
each $1\leq i\leq n$, we define a natural transformation
\[
h_{i}:N\pr{\Fin_{\ast}}_{\act}^{n}\times\Delta^{1}\to N\pr{\Fin_{\ast}}
\]
by using the inert maps $\Vee_{i=1}^{n}\inp{m_{i}}\to\inp{m_{i}}$.
\end{defn}

\begin{prop}
\label{prop:oplus_p-limit}Let $n\ge1$, let $\cal C$ be an $\infty$-category,
and let $p:\cal M^{\t}\to\cal C\times N\pr{\Fin_{\ast}}$ be a $\cal C$-family
of $\infty$-operads. We can construct the functor $\bigoplus_{1\leq i\leq n}:\pr{\cal M_{\act}^{\t}}^{n}\times_{\cal C^{n}}\cal C\to\cal M^{\t}$
so that for each $1\leq i\leq n$, there is an inert natural transformation
$\bigoplus_{1\leq i\leq n}\to\opn{pr}_{i}$ rendering the diagram
% https://q.uiver.app/?q=WzAsNCxbMCwwLCIoKFxcbWF0aGNhbHtNfV5cXG90aW1lc197XFxtYXRocm17YWN0fX0pXm5cXHRpbWVzIF97XFxtYXRoY2Fse0N9Xm59XFxtYXRoY2Fse0N9KVxcdGltZXMgXFxEZWx0YSBeMSJdLFsxLDAsIlxcbWF0aGNhbHtNfV5cXG90aW1lcyAiXSxbMSwxLCJcXG1hdGhjYWx7Q31cXHRpbWVzIE4oXFxtYXRoc2Z7RmlufV9cXGFzdCkiXSxbMCwxLCJcXG1hdGhjYWx7Q31cXHRpbWVzIE4oXFxtYXRoc2Z7RmlufV9cXGFzdCApXm5fe1xcbWF0aHJte2FjdH19XFx0aW1lcyBcXERlbHRhXjEiXSxbMCwxLCJcXHdpZGV0aWxkZXtofV9pIl0sWzEsMiwicCJdLFszLDIsIlxcb3BlcmF0b3JuYW1le2lkfV97XFxtYXRoY2Fse0N9fVxcdGltZXMgaF9pIiwyXSxbMCwzXV0=
\begin{equation}\label{d:prop:oplus_p-limit}
\begin{tikzcd}
	{((\mathcal{M}^\otimes_{\mathrm{act}})^n\times _{\mathcal{C}^n}\mathcal{C})\times \Delta ^1} & {\mathcal{M}^\otimes } \\
	{\mathcal{C}\times N(\mathsf{Fin}_\ast )^n_{\mathrm{act}}\times \Delta^1} & {\mathcal{C}\times N(\mathsf{Fin}_\ast)}
	\arrow["{\widetilde{h}_i}", from=1-1, to=1-2]
	\arrow["p", from=1-2, to=2-2]
	\arrow["{\operatorname{id}_{\mathcal{C}}\times h_i}"', from=2-1, to=2-2]
	\arrow[from=1-1, to=2-1]
\end{tikzcd}
\end{equation}commutative.
\end{prop}

\begin{proof}
We begin with the construction of the functor $\bigoplus_{1\leq i\leq n}$.
For each $1\leq i\leq n$, there is an inert natural transformation
$g_{i}:\Env\pr{N\pr{\Fin_{\ast}}}_{\inp n}^{\t}\times\Delta^{1}\to\Env\pr{N\pr{\Fin_{\ast}}}^{\t}$
from the inclusion to the functor 
\[
\pr{\alpha:\inp k\to\inp n}\mapsto\pr{\alpha^{-1}\pr i_{\ast}\to\inp 1},
\]
where $\alpha^{-1}\pr i_{\ast}$ denotes the unique object $\inp m\in\Fin_{\ast}$
which admits an order-preserving bijection $\alpha^{-1}\pr i\cong\inp m^{\circ}$.
This natural transformation covers $\rho^{i}:\inp n\to\inp 1$. Thus
we may construct the functor $\rho_{!}^{i}:\Env\pr{N\pr{\Fin_{\ast}}}_{\inp n}^{\t}\to N\pr{\Fin_{\ast}}_{\act}$
by $\rho_{!}^{i}\pr{\alpha:\inp k\to\inp n}=\alpha^{-1}\pr i_{\ast}$.
Since the functor $p$ is a categorical fibration, it induces a fibration
$\Env\pr{\cal M}^{\t}\to\cal C\times\Env\pr{N\pr{\Fin_{\ast}}}^{\t}$
of generalized $\infty$-operads. Therefore, we can find an inert
natural transformation $\Env\pr{\cal M}_{\inp n}^{\t}\times\Delta^{1}\to\Env\pr{\cal M}^{\t}$
rendering the diagram % https://q.uiver.app/?q=WzAsNSxbMCwwLCJcXG9wZXJhdG9ybmFtZXtFbnZ9KFxcbWF0aGNhbHtNfSleXFxvdGltZXMgX3tcXGxhbmdsZSBuXFxyYW5nbGV9XFx0aW1lcyBcXHswXFx9Il0sWzIsMCwiXFxvcGVyYXRvcm5hbWV7RW52fShcXG1hdGhjYWx7TX0pXlxcb3RpbWVzICJdLFsyLDEsIlxcbWF0aGNhbHtDfVxcdGltZXMgXFxvcGVyYXRvcm5hbWV7RW52fShOKFxcbWF0aHNme0Zpbn1fXFxhc3QpKV5cXG90aW1lcyAiXSxbMCwxLCJcXG9wZXJhdG9ybmFtZXtFbnZ9KFxcbWF0aGNhbHtNfSleXFxvdGltZXMgX3tcXGxhbmdsZSBuXFxyYW5nbGV9XFx0aW1lcyBcXERlbHRhXjEiXSxbMSwxLCJcXG1hdGhjYWx7Q31cXHRpbWVzIFxcb3BlcmF0b3JuYW1le0Vudn0oTihcXG1hdGhzZntGaW59X1xcYXN0KSleXFxvdGltZXMgX3tcXGxhbmdsZSBuXFxyYW5nbGV9XFx0aW1lcyBcXERlbHRhXjEiXSxbMSwyXSxbMCwzXSxbMywxLCIiLDEseyJzdHlsZSI6eyJib2R5Ijp7Im5hbWUiOiJkYXNoZWQifX19XSxbMyw0XSxbNCwyLCJcXG9wZXJhdG9ybmFtZXtpZH1fe1xcbWF0aGNhbHtDfX1cXHRpbWVzIGdfaSIsMl0sWzAsMV1d
\[\begin{tikzcd}
	{\operatorname{Env}(\mathcal{M})^\otimes _{\langle n\rangle}\times \{0\}} && {\operatorname{Env}(\mathcal{M})^\otimes } \\
	{\operatorname{Env}(\mathcal{M})^\otimes _{\langle n\rangle}\times \Delta^1} & {\mathcal{C}\times \operatorname{Env}(N(\mathsf{Fin}_\ast))^\otimes _{\langle n\rangle}\times \Delta^1} & {\mathcal{C}\times \operatorname{Env}(N(\mathsf{Fin}_\ast))^\otimes }
	\arrow[from=1-3, to=2-3]
	\arrow[from=1-1, to=2-1]
	\arrow[dashed, from=2-1, to=1-3]
	\arrow[from=2-1, to=2-2]
	\arrow["{\operatorname{id}_{\mathcal{C}}\times g_i}"', from=2-2, to=2-3]
	\arrow[from=1-1, to=1-3]
\end{tikzcd}\]commutative. We use this inert natural transformation to define the
functor $\rho_{!}^{i}:\Env\pr{\cal M}_{\inp n}^{\t}\to\cal M_{\act}^{\t}$.
This will ensure that the diagram

% https://q.uiver.app/?q=WzAsNCxbMCwxLCJcXG1hdGhjYWx7Q31cXHRpbWVzIChOKFxcbWF0aHNme0Zpbn1fXFxhc3QpX3tcXG1hdGhybXthY3R9fSlebiJdLFsyLDEsIlxcbWF0aGNhbHtDfVxcdGltZXMgXFxvcGVyYXRvcm5hbWV7RW52fShOKFxcbWF0aHNme0Zpbn1fXFxhc3QpKV5cXG90aW1lcyBfe1xcbGFuZ2xlIG4gXFxyYW5nbGV9Il0sWzIsMCwiXFxvcGVyYXRvcm5hbWV7RW52fShcXG1hdGhjYWx7TX0pXlxcb3RpbWVzIF97XFxsYW5nbGUgbiBcXHJhbmdsZX0iXSxbMCwwLCIoXFxtYXRoY2Fse019Xlxcb3RpbWVzIF97XFxtYXRocm17YWN0fX0pXm5cXHRpbWVzIF97XFxtYXRoY2Fse0N9Xm59XFxtYXRoY2Fse0N9Il0sWzMsMF0sWzEsMCwiXFxvcGVyYXRvcm5hbWV7aWR9X3tcXG1hdGhjYWx7Q319XFx0aW1lcyhcXHJob15pXyEpX3tpPTF9Xm4iXSxbMiwxXSxbMiwzLCIoXFxyaG9eaV8hKV97aT0xfV5uIiwyXV0=
\begin{equation}\label{d1}
\begin{tikzcd}
	{(\mathcal{M}^\otimes _{\mathrm{act}})^n\times _{\mathcal{C}^n}\mathcal{C}} && {\operatorname{Env}(\mathcal{M})^\otimes _{\langle n \rangle}} \\
	{\mathcal{C}\times (N(\mathsf{Fin}_\ast)_{\mathrm{act}})^n} && {\mathcal{C}\times \operatorname{Env}(N(\mathsf{Fin}_\ast))^\otimes _{\langle n \rangle}}
	\arrow[from=1-1, to=2-1]
	\arrow["{\operatorname{id}_{\mathcal{C}}\times(\rho^i_!)_{i=1}^n}", from=2-3, to=2-1]
	\arrow[from=1-3, to=2-3]
	\arrow["{(\rho^i_!)_{i=1}^n}"', from=1-3, to=1-1]
\end{tikzcd}
\end{equation}commutes. Now the functor $\pr{\rho_{!}^{i}}_{i=1}^{n}:\Env\pr{N\pr{\Fin_{\ast}}}_{\inp n}^{\t}\to\pr{\pr{N\pr{\Fin_{\ast}}}_{\act}}^{n}$
is a trivial fibration, and it has a section $\phi:\pr{N\pr{\Fin_{\ast}}_{\act}}^{n}\to\Env\pr{N\pr{\Fin_{\ast}}}_{\inp n}^{\t}$
given by 
\[
\pr{\inp{k_{i}}}_{i=1}^{n}\mapsto\pr{\Vee_{i=1}^{n}\inp{k_{i}}\to\Vee_{i=1}^{n}\inp 1=\inp n}.
\]
Applying Proposition \ref{prop:inverting_weak_equiv_of_pairs} to
the commutative diagram (\ref{d1}) and the section $\id_{\cal C}\times\phi$,
we can find a functor $\Phi:\pr{\cal M_{\act}^{\t}}^{n}\times_{\cal C^{n}}\cal C\to\Env\pr{\cal M}_{\inp n}^{\t}$
with the following properties:
\begin{itemize}
\item The diagram % https://q.uiver.app/?q=WzAsNCxbMCwxLCJcXG1hdGhjYWx7Q31cXHRpbWVzIChOKFxcbWF0aHNme0Zpbn1fXFxhc3QpX3tcXG1hdGhybXthY3R9fSlebiJdLFsyLDEsIlxcbWF0aGNhbHtDfVxcdGltZXMgXFxvcGVyYXRvcm5hbWV7RW52fShOKFxcbWF0aHNme0Zpbn1fXFxhc3QpKV5cXG90aW1lcyBfe1xcbGFuZ2xlIG4gXFxyYW5nbGV9Il0sWzIsMCwiXFxvcGVyYXRvcm5hbWV7RW52fShcXG1hdGhjYWx7TX0pXlxcb3RpbWVzIF97XFxsYW5nbGUgbiBcXHJhbmdsZX0iXSxbMCwwLCIoXFxtYXRoY2Fse019Xlxcb3RpbWVzIF97XFxtYXRocm17YWN0fX0pXm5cXHRpbWVzIF97XFxtYXRoY2Fse0N9Xm59XFxtYXRoY2Fse0N9Il0sWzMsMF0sWzAsMSwiXFxvcGVyYXRvcm5hbWV7aWR9X3tcXG1hdGhjYWx7Q319XFx0aW1lc1xccGhpIiwyXSxbMiwxXSxbMywyLCJcXFBoaSJdXQ==
\[\begin{tikzcd}
	{(\mathcal{M}^\otimes _{\mathrm{act}})^n\times _{\mathcal{C}^n}\mathcal{C}} && {\operatorname{Env}(\mathcal{M})^\otimes _{\langle n \rangle}} \\
	{\mathcal{C}\times (N(\mathsf{Fin}_\ast)_{\mathrm{act}})^n} && {\mathcal{C}\times \operatorname{Env}(N(\mathsf{Fin}_\ast))^\otimes _{\langle n \rangle}}
	\arrow[from=1-1, to=2-1]
	\arrow["{\operatorname{id}_{\mathcal{C}}\times\phi}"', from=2-1, to=2-3]
	\arrow[from=1-3, to=2-3]
	\arrow["\Phi", from=1-1, to=1-3]
\end{tikzcd}\]is commutative. 
\item The composite $\pr{\rho_{!}^{i}}_{1\leq i\leq n}\circ\Phi$ is naturally
equivalent over $\cal C\times\pr{N\pr{\Fin_{\ast}}_{\act}}^{n}$ to
the identity functor. 
\end{itemize}
We will construct the functor $\bigoplus_{1\leq i\leq n}$ as the
composite
\[
\pr{\cal M_{\act}^{\t}}^{n}\times_{\cal C^{n}}\cal C\xrightarrow{\Phi}\Env\pr{\cal M}_{\inp n}^{\t}\xrightarrow{\text{forget}}\cal M^{\t}.
\]

To complete the proof, we must show that, for each $1\leq i\leq n$,
there is an inert natural transformation $\widetilde{h}_{i}:\pr{\pr{\cal M_{\act}^{\t}}^{n}\times_{\cal C^{n}}\cal C}\times\Delta^{1}\to\cal M^{\t}$
rendering the diagram (\ref{d:prop:oplus_p-limit}) commutative. Since
$\opn{pr}_{i}$ is naturally equivalent over $\cal C\times N\pr{\Fin_{\ast}}$
to the composite $\rho_{!}^{i}\Phi$, it suffices (by \cite[Proposition A.2.3.1]{HTT})
to find an inert natural transformation rendering the diagram % https://q.uiver.app/#q=WzAsNSxbMCwwLCIoKFxcbWF0aGNhbHtNfV5cXG90aW1lc197XFxtYXRocm17YWN0fX0pXm5cXHRpbWVzIF97XFxtYXRoY2Fse0N9Xm59XFxtYXRoY2Fse0N9KVxcdGltZXMgXFxwYXJ0aWFsXFxEZWx0YV4xIl0sWzAsMSwiKChcXG1hdGhjYWx7TX1eXFxvdGltZXNfe1xcbWF0aHJte2FjdH19KV5uXFx0aW1lcyBfe1xcbWF0aGNhbHtDfV5ufVxcbWF0aGNhbHtDfSlcXHRpbWVzIFxcRGVsdGFeMSJdLFsxLDEsIlxcbWF0aGNhbHtDfVxcdGltZXMgTihcXG1hdGhzZntGaW59X1xcYXN0KV97XFxtYXRocm17YWN0fX1eblxcdGltZXMgXFxEZWx0YV4xIl0sWzIsMSwiXFxtYXRoY2Fse0N9XFx0aW1lcyBOKFxcbWF0aHNme0Zpbn1fXFxhc3QpIl0sWzIsMCwiXFxtYXRoY2Fse019Xlxcb3RpbWVzICJdLFswLDFdLFsxLDJdLFsyLDMsIlxcb3BlcmF0b3JuYW1le2lkfV97XFxtYXRoY2Fse0N9fVxcdGltZXMgaF9pIiwyXSxbNCwzLCJwIl0sWzAsNCwiKFxcYmlnb3BsdXNfe2k9MX1ebixcXHJob15pXyFcXGNpcmMgXFxQaGkgKSJdLFsxLDQsIiIsMix7InN0eWxlIjp7ImJvZHkiOnsibmFtZSI6ImRhc2hlZCJ9fX1dXQ==
\[\begin{tikzcd}
	{((\mathcal{M}^\otimes_{\mathrm{act}})^n\times _{\mathcal{C}^n}\mathcal{C})\times \partial\Delta^1} && {\mathcal{M}^\otimes } \\
	{((\mathcal{M}^\otimes_{\mathrm{act}})^n\times _{\mathcal{C}^n}\mathcal{C})\times \Delta^1} & {\mathcal{C}\times N(\mathsf{Fin}_\ast)_{\mathrm{act}}^n\times \Delta^1} & {\mathcal{C}\times N(\mathsf{Fin}_\ast)}
	\arrow["{(\bigoplus_{i=1}^n,\rho^i_!\circ \Phi )}", from=1-1, to=1-3]
	\arrow[from=1-1, to=2-1]
	\arrow["p", from=1-3, to=2-3]
	\arrow[dashed, from=2-1, to=1-3]
	\arrow[from=2-1, to=2-2]
	\arrow["{\operatorname{id}_{\mathcal{C}}\times h_i}"', from=2-2, to=2-3]
\end{tikzcd}\]commutative. But the outer rectangle is equal to that of the diagram
% https://q.uiver.app/?q=WzAsNyxbMCwwLCIoKFxcbWF0aGNhbHtNfV5cXG90aW1lc197XFxtYXRocm17YWN0fX0pXm5cXHRpbWVzIF97XFxtYXRoY2Fse0N9Xm59XFxtYXRoY2Fse0N9KVxcdGltZXMgXFxwYXJ0aWFsXFxEZWx0YV4xIl0sWzAsMSwiKChcXG1hdGhjYWx7TX1eXFxvdGltZXNfe1xcbWF0aHJte2FjdH19KV5uXFx0aW1lc197XFxtYXRoY2Fse0N9Xm59XFxtYXRoY2Fse0N9KVxcdGltZXMgIFxcRGVsdGFeMSJdLFsyLDEsIlxcbWF0aGNhbHtDfVxcdGltZXMgXFxvcGVyYXRvcm5hbWV7RW52fShOKFxcbWF0aHNme0Zpbn1fXFxhc3QpKV5cXG90aW1lcyBfe1xcbGFuZ2xlIG5cXHJhbmdsZX1cXHRpbWVzIFxcRGVsdGFeMSJdLFszLDEsIlxcbWF0aGNhbHtDfVxcdGltZXMgTihcXG1hdGhzZntGaW59X1xcYXN0KSJdLFszLDAsIlxcbWF0aGNhbHtNfV5cXG90aW1lcyAiXSxbMSwxLCJcXG1hdGhjYWx7Q31cXHRpbWVzIFxcb3BlcmF0b3JuYW1le0Vudn0oXFxtYXRoY2Fse019KV5cXG90aW1lcyBfe1xcbGFuZ2xlIG5cXHJhbmdsZX1cXHRpbWVzIFxcRGVsdGFeMSJdLFsxLDAsIlxcbWF0aGNhbHtDfVxcdGltZXMgXFxvcGVyYXRvcm5hbWV7RW52fShcXG1hdGhjYWx7TX0pXlxcb3RpbWVzIF97XFxsYW5nbGUgbiBcXHJhbmdsZX1cXHRpbWVzIFxccGFydGlhbFxcRGVsdGFeMSJdLFswLDFdLFsyLDMsIlxcb3BlcmF0b3JuYW1le2lkfV97XFxtYXRoY2Fse0N9fVxcdGltZXMgZ19pIiwyXSxbNSwyXSxbMSw1LCJcXFBoaVxcdGltZXMgXFxvcGVyYXRvcm5hbWV7aWR9IiwyXSxbMCw2LCJcXFBoaVxcdGltZXMgXFxvcGVyYXRvcm5hbWV7aWR9Il0sWzYsNCwiKFxcdGV4dHtmb3JnZXR9LCBcXHJob15pXyEpIl0sWzYsNV0sWzQsM11d
\[\begin{tikzcd}[column sep=small, scale cd=.8]
	{((\mathcal{M}^\otimes_{\mathrm{act}})^n\times _{\mathcal{C}^n}\mathcal{C})\times \partial\Delta^1} & {\mathcal{C}\times \operatorname{Env}(\mathcal{M})^\otimes _{\langle n \rangle}\times \partial\Delta^1} && {\mathcal{M}^\otimes } \\
	{((\mathcal{M}^\otimes_{\mathrm{act}})^n\times_{\mathcal{C}^n}\mathcal{C})\times  \Delta^1} & {\mathcal{C}\times \operatorname{Env}(\mathcal{M})^\otimes _{\langle n\rangle}\times \Delta^1} & {\mathcal{C}\times \operatorname{Env}(N(\mathsf{Fin}_\ast))^\otimes _{\langle n\rangle}\times \Delta^1} & {\mathcal{C}\times N(\mathsf{Fin}_\ast),}
	\arrow[from=1-1, to=2-1]
	\arrow["{\operatorname{id}_{\mathcal{C}}\times g_i}"', from=2-3, to=2-4]
	\arrow[from=2-2, to=2-3]
	\arrow["{\Phi\times \operatorname{id}}"', from=2-1, to=2-2]
	\arrow["{\Phi\times \operatorname{id}}", from=1-1, to=1-2]
	\arrow["{(\text{forget}, \rho^i_!)}", from=1-2, to=1-4]
	\arrow[from=1-2, to=2-2]
	\arrow[from=1-4, to=2-4]
\end{tikzcd}\]so the existence of the desired filler follows from the definition
of $\rho_{!}^{i}$.
\end{proof}
\begin{cor}
[Universal Property of the Direct Sum Functor]\label{cor:oplus_p-limit}Let
$\cal C$ be an $\infty$-category, and let $p:\cal M^{\t}\to\cal C\times N\pr{\Fin_{\ast}}$
be a $\cal C$-family of $\infty$-operads. Let $q:K\to\cal C$ be
an object of $\SS_{/\cal C}$, let $n\geq1$, and let $f,g_{1},\dots,g_{n}:K\to\cal M_{\act}^{\t}$
be maps over $\cal C$. Assume the following:
\begin{enumerate}
\item For each $1\leq i\leq n$, there is an inert natural transformation
$\alpha_{i}:f\to g_{i}$ over $\cal C$.
\item For each vertex $v$ in $K$, the maps $\alpha_{i}$ give rise to
a $p$-limit cone $\{f\pr v\to g_{i}\pr v\}_{1\leq i\le n}$ lying
over the maps $\{\inp{m\pr v}\to\inp{m_{i}\pr v}\}_{1\leq i\leq n}$
of $N\pr{\Fin_{\ast}}$ that induces a bijection $\inp{m\pr v}^{\circ}\cong\coprod_{i=1}^{n}\inp{m_{i}\pr v}^{\circ}$.
\end{enumerate}
Then $f$ is naturally equivalent over $\cal C$ to the composite
\[
G:K\xrightarrow{\pr{g_{i}}_{i=1}^{n}}\pr{\cal M_{\act}^{\t}}^{n}\times_{\cal C^{n}}\cal C\xrightarrow{\bigoplus_{1\leq i\leq n}}\cal M_{\act}^{\t}.
\]
\end{cor}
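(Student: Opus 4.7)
The plan is to produce, via Proposition \ref{prop:oplus_p-limit}, a companion family of inert natural transformations $\beta_{i}:G\Rightarrow g_{i}$ whose pointwise components form $p$-limit cones, and then to identify $f$ with $G$ via the equivalence of Proposition \ref{prop:sec1_main}. More concretely, I would first precompose the inert natural transformations $\widetilde{h}_{i}:\bigoplus_{i=1}^{n}\Rightarrow\opn{pr}_{i}$ from Proposition \ref{prop:oplus_p-limit} with $(g_{j})_{j=1}^{n}:K\to\pr{\cal M_{\act}^{\t}}^{n}\times_{\cal C^{n}}\cal C$, yielding inert natural transformations $\beta_{i}:G\Rightarrow g_{i}$ over $\cal C$ whose components at each $v\in K$ cover the inert morphism $h_{i}$ in $N\pr{\Fin_{\ast}}$. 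The collection $\{\beta_{i}\pr v\}_{i=1}^{n}$ then consists of inert lifts, starting at $G\pr v$, of the inert decomposition $\{h_{i}:\inp{\sum_{j}m_{j}}\to\inp{m_{i}}\}_{i}$ of $p\pr{G\pr v}$, and such inert lifts form a $p$-limit cone by a routine iteration of condition (b) of the definition of a $\cal C$-family of $\infty$-operads (equivalently by Proposition \ref{prop:sec1_main} (c-ii)).

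At this point there are two pointwise $p$-limit cones with apices $f$ and $G$ over the common functor $(g_{1},\dots,g_{n}):K\to\pr{\cal M_{\act}^{\t}}^{n}\times_{\cal C^{n}}\cal C$. Using the equivalence $\pr{\rho_{!}^{i}}_{i=1}^{n}:\Env\pr{\cal M}_{\inp n}^{\t}\xrightarrow{\simeq}\pr{\cal M_{\act}^{\t}}^{n}\times_{\cal C^{n}}\cal C$ supplied by Proposition \ref{prop:sec1_main} applied to the $\cal C$-family $\Env\pr{\cal M}^{\t}$ of Proposition \ref{prop:2.2.4.4}, I would lift both cones to functors $\widehat{f},\widehat{G}:K\to\Env\pr{\cal M}_{\inp n}^{\t}$ over $\cal C$ satisfying $\rho_{!}^{i}\circ\widehat{f}\simeq g_{i}\simeq\rho_{!}^{i}\circ\widehat{G}$; by construction of $\bigoplus_{i=1}^{n}$, the lift $\widehat{G}$ agrees up to natural equivalence over $\cal C$ with the composite $\Phi\circ(g_{j})_{j=1}^{n}$, where $\Phi$ is the section from the proof of Proposition \ref{prop:oplus_p-limit}. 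Because $\pr{\rho_{!}^{i}}_{i=1}^{n}$ is an equivalence over $\cal C$, the lifts $\widehat{f}$ and $\widehat{G}$ are themselves naturally equivalent over $\cal C$; postcomposing with the forgetful functor $\Env\pr{\cal M}_{\inp n}^{\t}\to\cal M_{\act}^{\t}$ produces the required natural equivalence $f\simeq G$ over $\cal C$.

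The hard part will be the functorial construction of the lift $\widehat{f}$ from the cone data $(f,(\alpha_{i}))$, since Proposition \ref{prop:sec1_main} provides only a categorical equivalence. One way forward is a parameterized inert-lift argument along the lines of the proof of Proposition \ref{prop:pre-equivalence}: at each vertex $v\in K$, the $p$-limit cone data $(f\pr v,(\alpha_{i}\pr v))$ combinatorially determines a unique-up-to-equivalence active map $p\pr{f\pr v}\to\inp n$ making $\pr{f\pr v,\text{this map}}$ an object of $\Env\pr{\cal M}_{\inp n}^{\t}$; to globalize this pointwise construction into a functor on $K$, one would exploit the trivial cofibrancy of the relevant inclusion of marked simplicial sets in the cocartesian model structure over $\cal C\times N\pr{\Fin_{\ast}}$, precisely as in the proof of Proposition \ref{prop:pre-equivalence}.
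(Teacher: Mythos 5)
Your route is genuinely different from the paper's and it has a real gap at the point you yourself flag. You propose to lift both sides to $\Env\pr{\cal M}_{\inp n}^{\t}$ and compare there via the equivalence $\pr{\rho_{!}^{i}}_{i=1}^{n}$ of Proposition \ref{prop:sec1_main}. The lift $\widehat{G}=\Phi\circ\pr{g_{j}}_{j}$ is available by definition of $\bigoplus$, but the lift $\widehat{f}$---a functor $K\to\Env\pr{\cal M}_{\inp n}^{\t}$ over $\cal C$ with both $\text{forget}\circ\widehat{f}\simeq f$ and $\rho_{!}^{i}\circ\widehat{f}\simeq g_{i}$---requires a coherent choice, for every simplex of $K$ and not only for every vertex, of an active map into $\inp n$ compatible with $p_{2}\circ f$, and the pointwise determination you describe does not automatically assemble into such a functor. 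Your appeal to the proof of Proposition \ref{prop:pre-equivalence} does not supply the missing piece: that argument rectifies a cocartesian fibration over $\cal B\pr n$ via the simplicial Grothendieck construction, which is structurally different from lifting a fixed functor together with a compatible cone of inert transformations into a single fiber of the envelope.

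The paper's proof never lifts to the envelope. It first replaces $f$ by an equivalent functor so that $p_{2}\circ f$ is literally $\Vee_{i}\circ\pr{p_{2}g_{i}}_{i}$ and the images $p\circ\alpha_{i}$ literally equal $\pr{\id_{\cal C}\times h_{i}}\circ\pr{\pr{q,\pr{p_{2}g_{j}}_{j}}\times\id}$; this normalization is possible because $p$ is a categorical fibration. Once $f$ is in this normal form, the argument exploits that $\Fun_{\cal C}\pr{K,p}$-limits are computed pointwise and are stable under base change: both $\{\alpha_{i}:f\to g_{i}\}$ and $\{\widetilde{h}_{i}\circ\pr{g_{j}}_{j}:G\to g_{i}\}$ are then $\Fun_{\cal C}\pr{K,p}$-limit cones in $\Fun_{\cal C}\pr{K,\cal M^{\t}}$ over the diagram $\pr{g_{i}}_{i}$ with the same image in the base, so $f\simeq G$ follows from uniqueness of relative limits with no mention of the envelope. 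If you wish to salvage your envelope argument, the normalization of $f$ is precisely what is missing: once $p_{2}\circ f$ is the wedge on the nose, you can take $\widehat{f}$ to be the pair $\pr{f,\ \phi\circ\pr{p_{2}g_{j}}_{j}}$ with $\phi$ the section from the proof of Proposition \ref{prop:oplus_p-limit}, and no further coherence argument is needed.
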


\begin{proof}
Let $\eta:K\times\Delta^{1}\to N\pr{\Fin_{\ast}}$ denote the natural
equivalence which satisfies the following conditions: 
\begin{itemize}
\item The restriction $\eta\vert K\times\{0\}$ is given by $K\xrightarrow{\pr{p_{2}g_{i}}_{i}}N\pr{\Fin_{\ast}}_{\act}^{n}\xrightarrow{\Vee_{i=1}^{n}}N\pr{\Fin_{\ast}}$.
\item The restriction $\eta\vert K\times\{1\}$ is given by $p_{2}f$.
\item For each vertex $v\in K$ and $1\leq j\leq n$, the composite
\[
\Vee_{i=1}^{n}\inp{m_{i}\pr v}\xrightarrow{\eta}\inp{m\pr v}\xrightarrow{p_{2}\alpha_{j}}\inp{m_{j}\pr v}
\]
is the identity map on the $j$th summand and the null map on the
remaining summands.
\end{itemize}
Using the fact that $p$ is a categorical fibration, we can find a
functor $f':K\to\cal M^{\t}$ and a natural equivalence $f'\xrightarrow{\simeq}f$
which lifts the natural equivalence $\pr{q\circ\opn{pr}_{1},\eta}:K\times\Delta^{1}\to\cal C\times N\pr{\Fin_{\ast}}$.
Replacing $f$ by $f'$ if necessary, we may assume that the diagram
% https://q.uiver.app/?q=WzAsNCxbMCwwLCJLXFx0aW1lcyBcXERlbHRhXjEiXSxbMSwwLCJcXG1hdGhjYWx7TX1eXFxvdGltZXMgIl0sWzEsMSwiXFxtYXRoY2Fse0N9XFx0aW1lcyBOKFxcbWF0aHNme0Zpbn1fXFxhc3QpIl0sWzAsMSwiKFxcbWF0aGNhbHtDfVxcdGltZXMgKE4oXFxtYXRoc2Z7RmlufV9cXGFzdClfe1xcbWF0aHJte2FjdH19KV5uKVxcdGltZXMgXFxEZWx0YV4xIl0sWzAsMSwiXFxhbHBoYV9pIl0sWzEsMiwicCJdLFszLDIsIlxcb3BlcmF0b3JuYW1le2lkfV97XFxtYXRoY2Fse0N9fVxcdGltZXMgaF9pIiwyXSxbMCwzLCIocSwocF8yZ19qKV9qKVxcdGltZXMgXFxvcGVyYXRvcm5hbWV7aWR9IiwyXV0=
\[\begin{tikzcd}
	{K\times \Delta^1} & {\mathcal{M}^\otimes } \\
	{(\mathcal{C}\times (N(\mathsf{Fin}_\ast)_{\mathrm{act}})^n)\times \Delta^1} & {\mathcal{C}\times N(\mathsf{Fin}_\ast)}
	\arrow["{\alpha_i}", from=1-1, to=1-2]
	\arrow["p", from=1-2, to=2-2]
	\arrow["{\operatorname{id}_{\mathcal{C}}\times h_i}"', from=2-1, to=2-2]
	\arrow["{(q,(p_2g_j)_j)\times \operatorname{id}}"', from=1-1, to=2-1]
\end{tikzcd}\]commutes for each $1\leq i\leq n$. Since relative limits in functor
categories can be formed objectwise \cite[\href{https://kerodon.net/tag/02XK}{Tag 02XK}]{kerodon},
the morphisms $\alpha_{i}:f\to g_{i}$ in $\Fun\pr{K,\cal M^{\t}}$
form a $\Fun\pr{K,p}$-limit cone. Moreover, since relative limits
are stable under pullbacks \cite[Proposition 4.3.1.5]{HTT}, we deduce
that the morphisms $\{\alpha_{i}\}_{1\leq i\leq n}$ of $\Fun_{\cal C}\pr{K,\cal M^{\t}}$
form a $\pi$-limit cone. 

Now using Proposition \ref{prop:oplus_p-limit}, we can construct
the functor $\bigoplus_{1\leq i\leq n}:\pr{\cal M_{\act}^{\t}}^{n}\times_{\cal C^{n}}\cal C\to\cal M^{\t}$
so that there is an inert natural transformation $\widetilde{h}_{i}:\bigoplus_{1\leq i\leq n}\to\opn{pr}_{i}$
which lifts the composite $\pr{\cal M_{\act}^{\t}}^{n}\times_{\cal C^{n}}\cal C\to\cal C\times N\pr{\Fin_{\ast}}_{\act}^{n}\times\Delta^{1}\xrightarrow{\id_{\cal C}\times h_{i}}\cal C\times N\pr{\Fin_{\ast}}$.
Again, the induced natural transformations $\{G\to g_{i}\}_{1\leq i\leq n}$
form a $\Fun_{\cal C}\pr{K,p}$-limit cone covering the same cone
as $\{\alpha_{i}\}_{1\leq i\leq n}$. Thus we obtain the desired equivalence
$G\xrightarrow{\simeq}f$ in $\Fun_{\cal C}\pr{K,\cal M^{\t}}$, and
the proof is complete.
\end{proof}

\subsection{\label{subsec:dir_sum_slice}Direct Sum and Slice}

Let $\cal O^{\t}$ be an $\infty$-operad and let $Y\in\cal O^{\t}$
be an object. If $Y$ lies over an object $\inp n\in N\pr{\Fin_{\ast}}$
with $n\geq2$, then we can find objects $Y_{i}\in\cal O$ and an
equivalence $Y\simeq\bigoplus_{i=1}^{n}Y_{i}$. Given objects $X_{1},\dots,X_{n}\in\cal O^{\t}$
and active maps $f_{i}:X_{i}\to Y_{i}$, their direct sum $\bigoplus_{i=1}^{n}f_{i}:\bigoplus_{i=1}^{n}X_{i}\to\bigoplus_{i=1}^{n}Y_{i}\simeq Y$
is an active morphism with codomain $Y$. Conversely, given an active
morphism $f:X\to Y$ in $\cal O^{\t}$, we can write $f\simeq\bigoplus_{i=1}^{n}f_{i}$,
where $f_{i}$ is the active map obtained by factoring the composite
$X\to Y\to Y_{i}$ into an inert map followed by an active map. The
following proposition, which is the only result of this subsection,
asserts that this ``direct sum decomposition'' of morphisms is an
equivalence on the level of $\infty$-categories: 
\begin{prop}
\label{prop:directsum_slice_equiv}Let $\cal C$ be an $\infty$-category,
let $C\in\cal C$ be an object, and let $\cal M^{\t}\to\cal C\times N\pr{\Fin_{\ast}}$
be a $\cal C$-family of $\infty$-operads. For any integer $n\geq1$
and any objects $M_{1},\dots,M_{n}\in\cal M_{C}$, the direct sum
functor induces an equivalence of $\infty$-categories
\[
\pr{\pr{\cal M_{\act}^{\t}}^{n}\times_{\cal C^{n}}\cal C}_{/\pr{M_{1},\dots,M_{n}}}\xrightarrow{\simeq}\pr{\cal M_{\act}^{\t}}_{/\bigoplus_{i=1}^{n}M_{i}}.
\]
\end{prop}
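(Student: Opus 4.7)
The plan is to factor the direct sum functor through the monoidal envelope and then analyze the induced map on slices. By the construction in Proposition \ref{prop:oplus_p-limit}, $\bigoplus_{i=1}^n$ is the composite
\[
(\cal M_{\act}^\t)^n \times_{\cal C^n} \cal C \xrightarrow{\Phi} \Env(\cal M)_{\inp n}^\t \xrightarrow{\mrm{forget}} \cal M_{\act}^\t,
\]
where $\Phi$ is a section of the equivalence $(\rho_!^i)_{i=1}^n$ from Proposition \ref{prop:sec1_main} which covers the section $\phi$ at the level of $N(\Fin_{\ast})$. Since $\phi(\inp 1, \ldots, \inp 1) = (\inp n, \id_{\inp n})$, we have $\Phi(M_1, \ldots, M_n) = (\widetilde M, \id_{\inp n})$, where $\widetilde M \in \cal M^\t_{(C, \inp n)}$ is characterized by $\rho_!^i(\widetilde M) \simeq M_i$; by definition, $\widetilde M = \bigoplus_{i=1}^n M_i$.

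Passing to slices yields a factorization
\[
\pr{(\cal M_{\act}^\t)^n \times_{\cal C^n} \cal C}_{/(M_1, \ldots, M_n)} \xrightarrow{\Phi_{/}} (\Env(\cal M)_{\inp n}^\t)_{/(\widetilde M, \id_{\inp n})} \xrightarrow{\mrm{forget}_{/}} (\cal M^\t_{\act})_{/\widetilde M}.
\]
The first map is an equivalence because $\Phi$ is an equivalence of $\infty$-categories and equivalences induce equivalences on slices. The heart of the argument is to show that the second map is in fact an \emph{isomorphism of simplicial sets}.

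To see this, unpack the definition $\Env(\cal M)_{\inp n}^\t = \cal M^\t \times_{N(\Fin_{\ast})} \Act(N(\Fin_{\ast}))_{/\inp n}$. A $k$-simplex of $(\Env(\cal M)_{\inp n}^\t)_{/(\widetilde M, \id_{\inp n})}$ is a pair $(\sigma_1, \sigma_2)$ consisting of a $(k+1)$-simplex $\sigma_1 : \Delta^{k+1} \to \cal M^\t$ with last vertex $\widetilde M$ and a compatible $(k+1)$-simplex $\sigma_2$ of $\Act(N(\Fin_{\ast}))_{/\inp n}$ ending at $(\inp n, \id_{\inp n})$, subject to $p_2 \sigma_1 = \opn{ev}_0 \sigma_2$. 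Every morphism of $\Env(\cal M)_{\inp n}^\t$ corresponds to a square in $N(\Fin_{\ast})$ with active horizontals and right vertical $\id_{\inp n}$, which forces the left vertical, hence the underlying morphism of $\cal M^\t$, to be active; thus $\sigma_1$ automatically lies in $\cal M^\t_{\act}$. On the other hand, $\sigma_2$ encodes a natural transformation from $p_2 \sigma_1$ to the constant simplex at $\inp n$ whose components $\alpha_i : p_2 \sigma_1(i) \to \inp n$ are active and satisfy $\alpha_{k+1} = \id_{\inp n}$; compatibility then forces $\alpha_i = (p_2 \sigma_1)_{i \to k+1}$, which is automatically active when $\sigma_1 \in \cal M^\t_{\act}$. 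Hence $\sigma_2$ is uniquely determined by $\sigma_1$, and every such $\sigma_1$ lifts uniquely to a pair $(\sigma_1, \sigma_2)$, giving a bijection on $k$-simplices natural in $k$.

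The main obstacle is the combinatorial bookkeeping in the last step, which is ultimately trivial but tedious to set up carefully. The key conceptual point making it go through is that $\id_{\inp n}$ is terminal in $\Act(N(\Fin_{\ast}))_{/\inp n}$, so the extra ``active-structure'' datum in the monoidal envelope becomes redundant once one slices over $(\widetilde M, \id_{\inp n})$.
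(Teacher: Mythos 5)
Your proposal is correct and follows essentially the same path as the paper: both factor $\bigoplus$ through the monoidal envelope, use that $\Phi$ is an equivalence of $\infty$-categories to handle the first leg, and show that the forgetful map out of the slice of the envelope over $(\bigoplus_i M_i,\id_{\inp n})$ is an \emph{isomorphism} of simplicial sets. The paper packages the last step more formally (writing $(\Env(\cal M)^\t_{\inp n})_{/(M,\alpha)}$ as a fiber product over $N(\Fin_\ast)_{/\inp n}$ and noting that $\opn{ev}_0\colon(\Act(N(\Fin_\ast))_{\inp n})_{/\alpha}\to (N(\Fin_\ast)_{\act})_{/\inp n}$ is an isomorphism), whereas you carry out the equivalent simplex-level bookkeeping by hand, but the underlying observation --- that the active-morphism datum becomes redundant once one slices over the identity --- is the same.
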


\begin{proof}
Recall that the direct sum functor $\bigoplus_{i=1}^{n}$ is obtained
as the composite 
\[
\pr{\cal M_{\act}^{\t}}^{n}\times_{\cal C^{n}}\cal C\xrightarrow[\Phi]{\simeq}\Env\pr{\cal M}_{\inp n}^{\t}\xrightarrow{\text{forget}}\cal M_{\act}^{\t},
\]
where the map $\Phi$ is the inverse equivalence over $\cal C$ of
the functor $\pr{\rho_{!}^{i}}_{1\leq i\le n}:\Env\pr{\cal M}_{\inp n}^{\t}\xrightarrow{\simeq}\pr{\cal M_{\act}^{\t}}^{n}\times_{\cal C^{n}}\cal C$
. The functor $\Phi$ maps the object $\pr{M_{1},\dots,M_{n}}$ to
the object $\pr{\bigoplus_{i=1}^{n}M_{i},\alpha:\inp n\to\inp n}$,
where $\alpha$ is a bijection. It will therefore suffice to prove
that the functor 
\[
\theta:\pr{\Env\pr{\cal M}_{\inp n}^{\t}}_{/\pr{\bigoplus_{i=1}^{n}M_{i},\alpha}}\to\pr{\cal M_{\act}^{\t}}_{/\bigoplus_{i=1}^{n}M_{i}}
\]
is an equivalence of $\infty$-categories. Set $M=\bigoplus_{i=1}^{n}M_{i}$.
By definition, we have
\[
\pr{\Env\pr{\cal M}_{\inp n}^{\t}}_{/\pr{M,\alpha}}=\pr{\Env\pr{N\pr{\Fin_{\ast}}}_{\inp n}^{\t}}_{/\alpha}\times_{N\pr{\Fin_{\ast}}_{/\inp n}}\cal M_{/M}^{\t}.
\]
Since
\[
\pr{\Env\pr{N\pr{\Fin_{\ast}}}_{\inp n}^{\t}}_{/\alpha}\to\pr{N\pr{\Fin_{\ast}}_{\act}}_{/\inp n}
\]
is an \textit{isomorphism} of simplicial sets, so is $\theta$. In
particular, the map $\theta$ is a categorical equivalence, as claimed.
\end{proof}

\section{\label{sec:FTOK}The Fundamental Theorem of Operadic Kan Extensions}

The goal of this section is to reproduce Lurie's proof of the fundamental
theorem of operadic Kan extensions \cite[Theorem 3.1.2.3]{HA} and
indicate which part of the proof deserves further explanations. For
this, we start by recalling the statement of the theorem in Subsection
\ref{subsec:Recollection}. After that, we will give a transcription
of Lurie's proof. For the purpose of exposition, we will extract some
parts of his arguments as independent results in Subsection \ref{subsec:prelim},
and then reproduce Lurie's argument in Subsection \ref{subsec:Lurie's_proof}.
On the course of the retelling, we will indicate the nontrivial parts
of the arguments that was left out by Lurie, and state them as lemmas.
These lemmas will be proved in Section \ref{sec:leftover}, using
results from previous sections.

\subsection{\label{subsec:Recollection}Recollection}

In this subsection, we recall the definition of operadic Kan extensions
and the statement of the fundamental theorem of operadic Kan extensions. 
\begin{defn}
\cite[Definition 3.1.2.2]{HA} Let $\cal M^{\t}\to N\pr{\Fin_{\ast}}\times\Delta^{1}$
be a $\Delta^{1}$-family of $\infty$-operads and let $q:\cal C^{\t}\to\cal O^{\t}$
be a fibration of $\infty$-operads. Set $\cal A^{\t}=\cal M^{\t}\times_{\Delta^{1}}\{0\}$
and $\cal B^{\t}=\cal M^{\t}\times_{\Delta^{1}}\{1\}$. A map $F:\cal M^{\t}\to\cal C^{\t}$
is called an \textbf{operadic $q$-left Kan extension of $F\vert\cal A^{\t}$
}if the following condition is satisfied for every object $B\in\cal B$:
\begin{itemize}
\item [($\ast$)]The composite map
\[
\pr{\pr{\cal M_{\act}^{\t}}_{/B}\times_{\Delta^{1}}\{0\}}^{\rcone}\to\pr{\cal M_{/B}^{\t}}^{\rcone}\to\cal M^{\t}\xrightarrow{\overline{F}}\cal C^{\t}
\]
is an operadic $q$-colimit diagram.
\end{itemize}
\end{defn}

In fact, Lurie requires condition ($\ast$) to hold for every object
$B\in\cal B^{\t}$ in \cite[Definition 3.1.2.2]{HA}. This is not
a problem, because of the following proposition, which seems to be
implicit in \cite{HA}.
\begin{prop}
\label{prop:operadic_Kan_equiv}Let $p:\cal M^{\t}\to N\pr{\Fin_{\ast}}\times\Delta^{1}$
be a correspondence from an $\infty$-operad $\cal A^{\t}$ to another
$\infty$-operad $\cal B^{\t}$. Let $q:\cal C^{\t}\to\cal O^{\t}$
be a fibration of $\infty$-operads and let $F:\cal M^{\t}\to\cal C^{\t}$
be a map of generalized $\infty$-operads. Suppose that $F$ is an
operadic $q$-left Kan extension of \textbf{$F\vert\cal A^{\t}$}.
Then for every object $B\in\cal B^{\t},$ the map 
\[
\theta:\pr{\pr{\cal M_{\act}^{\t}}_{/B}\times_{\Delta^{1}}\{0\}}^{\rcone}\to\pr{\cal M_{/B}^{\t}}^{\rcone}\to\cal M^{\t}\xrightarrow{F}\cal C^{\t}
\]
is an operadic $q$-colimit diagram.
\end{prop}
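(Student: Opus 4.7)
The plan is to reduce the assertion for a general $B\in\cal B^{\t}$ to the already-assumed case $B\in\cal B$ by means of the direct-sum decomposition furnished by Propositions \ref{prop:sec1_main} and \ref{prop:directsum_slice_equiv}. I fix $B\in\cal B^{\t}$ lying over $\langle n\rangle\in N(\Fin_{\ast})$ with $n\ge 1$ and choose inert lifts $B\to B_{i}$ of $\rho^{i}$ for each $i$, exhibiting $B\simeq B_{1}\oplus\cdots\oplus B_{n}$ with $B_{i}\in\cal B$. (The case $n=0$ is handled separately, see below.)

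Applying Proposition \ref{prop:directsum_slice_equiv} to the $\Delta^{1}$-family $\cal M^{\t}$ at $1\in\Delta^{1}$ with $M_{i}=B_{i}$, and then passing to the fiber over $0\in\Delta^{1}$, yields an equivalence
\[
K:=\prod_{i=1}^{n}K_{i}\xrightarrow{\simeq}(\cal M_{\act}^{\t})_{/B}\times_{\Delta^{1}}\{0\},\qquad K_{i}:=(\cal M_{\act}^{\t})_{/B_{i}}\times_{\Delta^{1}}\{0\}.
\]
By hypothesis, each composite $\overline{f}_{i}:K_{i}^{\rcone}\to(\cal M_{/B_{i}}^{\t})^{\rcone}\to\cal M^{\t}\xrightarrow{F}\cal C^{\t}$ is an operadic $q$-colimit diagram. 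Under the displayed equivalence, the projection $\opn{pr}_{i}:K\to K_{i}$ corresponds to the functor that sends $(X,X\to B)$ to $(X_{i},X_{i}\to B_{i})$ obtained by the inert-active factorization of $X\to B\to B_{i}$, and the diagram $\overline{f}:K^{\rcone}\to\cal C^{\t}$ whose operadic-colimit property is to be established is the composite appearing in the statement of the proposition, pulled back along the equivalence.

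Next I would construct, for each $i$, an inert natural transformation $\alpha_{i}:\overline{f}\to\overline{f}_{i}\circ(\opn{pr}_{i})^{\rcone}$: on the cone point, $\alpha_{i}$ is $F$ applied to the chosen inert $B\to B_{i}$; on a vertex $(X,X\to B)\in K$, it is $F$ applied to the inert map $X\to X_{i}$ from the factorization. Since $F$ is a morphism of generalized $\infty$-operads, each $\alpha_{i}$ is pointwise inert. At every vertex $v\in K^{\rcone}$, the family $\{\alpha_{i}(v)\}_{i}$ is a collection of inerts in $\cal C^{\t}$ covering $\{\rho^{i}\}$; such a family is a $q$-limit cone, because it is a limit cone relative to the projection $\cal C^{\t}\to N(\Fin_{\ast})$ by the $\infty$-operad axioms for $\cal C^{\t}$, and because $q$ sends it to an analogous family of inerts in $\cal O^{\t}$ which is likewise a limit cone, so that the relative-limit property follows.

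Corollary \ref{cor:3.1.1.8}, applied to the datum $(\overline{f},\{\overline{f}_{i}\},\{\alpha_{i}\})$, then concludes the proof. The case $n=0$ admits a brief direct verification: $B\in\cal B_{\langle 0\rangle}^{\t}$ is $p$-terminal, and $F$ sends $p$-terminal objects to $q$-terminal ones, so $\overline{f}$ takes values in $q$-terminal objects and Corollary \ref{cor:3.1.1.8} with $n=0$ applies with vacuous hypotheses. The main obstacle will be promoting the pointwise description of $\alpha_{i}$ to an honest morphism of simplicial sets $K^{\rcone}\times\Delta^{1}\to\cal C^{\t}$ compatibly with the equivalence coming from Proposition \ref{prop:directsum_slice_equiv}; this amounts to exhibiting post-composition with the inert edges $B\to B_{i}$ as a natural transformation between slice-based functors, which is a routine but bookkeeping-heavy exercise with slice $\infty$-categories analogous to the manipulations in Section \ref{subsec:Universal-Property-of}.
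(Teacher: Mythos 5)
Your proposal is correct and follows essentially the same route as the paper: for $n\geq 1$, both pass through the slice-level direct-sum equivalence of Proposition \ref{prop:directsum_slice_equiv} and then reduce to \cite[Proposition 3.1.1.8]{HA} by supplying inert natural transformations to the component diagrams $\overline{f}_i$ with pointwise $q$-limit cones. The only organizational difference is that the paper establishes the natural equivalence $F\circ\bigoplus_{\cal M}\simeq\bigoplus_{\cal C}\circ\prod F$ explicitly via Corollary \ref{cor:oplus_p-limit} and a commutative diagram, whereas you invoke Corollary \ref{cor:3.1.1.8}, which packages the same content.

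Two remarks. First, the ``bookkeeping-heavy exercise'' you defer---upgrading the pointwise inert maps $X\to X_i$ to an honest natural transformation $K^{\rcone}\times\Delta^1\to\cal C^{\t}$---is exactly what Proposition \ref{prop:oplus_p-limit} provides: the inert natural transformation $\widetilde h_i:\bigoplus_i\to\opn{pr}_i$ on $\cal M^{\t}$, pushed forward by $F$ (which preserves inerts), yields your $\alpha_i$ on the nose, so no new slice-category manipulation is required. Second, your $n=0$ case should not be routed through Corollary \ref{cor:3.1.1.8}: that corollary and its proof rest on the $n$-ary direct-sum functor, which is only defined for $n\geq 1$, and the ``vacuous'' hypothesis there is in fact the nontrivial condition that every value of $\overline{f}$ be $q$-terminal. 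The paper instead argues directly: $(\cal M_{\act}^{\t})_{/B}\times_{\Delta^1}\{0\}\to\Delta^0$ is a trivial fibration (because $\cal M^{\t}_{\inp 0}\to\Delta^1$ is), and every edge in the contractible Kan complex $\cal C^{\t}_{\inp 0}$ is an equivalence, so the diagram is an operadic $q$-colimit cone by \cite[Example 3.1.1.6]{HA}.
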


\begin{proof}
Let $\inp n$ denote the image of $B$ in $N\pr{\Fin_{\ast}}$. We
consider two cases, depending on whether $n$ is equal to $0$ or
not.

Suppose first that $n=0$. Then we have $\pr{\cal M_{\act}^{\t}}_{/B}=\pr{\cal M_{\inp 0}^{\t}}_{/B}$,
since there is no active map $\inp k\to\inp 0$ with $k\geq1$. Since
the functor $\cal M_{\inp 0}^{\t}\to\Delta^{1}$ is a trivial fibration,
so is the functor $\pr{\cal M_{\act}^{\t}}_{/B}\times_{\Delta^{1}}\{0\}\to\Delta_{/1}^{1}\times_{\Delta^{1}}\{0\}\cong\Delta^{0}$.
It will therefore suffice to show that $F$ carries a morphism of
the form $A\to B$ in $\cal M^{\t}$ with $A\in\cal M_{\inp 0}^{\t}\times_{\Delta^{1}}\{0\}$
to an equivalence in $\cal C^{\t}$. This is clear, since $\cal C_{\inp 0}^{\t}$
is a contractible Kan complex.

Next, suppose that $n\geq1$. For each $1\leq i\leq n$, choose an
inert map $B\to B_{i}$ in $\cal B^{\t}$ over $\rho^{i}:\inp n\to\inp 1$,
and choose direct sum functors $\bigoplus_{i=1}^{n}$ for $\cal M^{\t}$
and $\cal C^{\t}$. Replacing the functor $\bigoplus_{i=1}^{n}$ by
a functor naturally equivalent one, we may assume that $B=\bigoplus_{i=1}^{n}B_{i}$.
According to Proposition \ref{prop:directsum_slice_equiv}, the direct
sum functor induces an equivalence of $\infty$-categories
\[
\phi:\pr{\pr{\cal M_{\act}^{\t}}^{n}\times_{\pr{\Delta^{1}}^{n}}\Delta^{1}}_{/\pr{B_{1},\dots,B_{n}}}\xrightarrow{\simeq}\pr{\cal M_{\act}^{\t}}_{/B}.
\]
There is also an isomorphism of simplicial sets
\[
\psi:\prod_{i=1}^{n}\pr{\pr{\cal M_{\act}^{\t}}_{/B_{i}}\times_{\Delta^{1}}\{0\}}\cong\pr{\pr{\cal M_{\act}^{\t}}^{n}\times_{\pr{\Delta^{1}}^{n}}\Delta^{1}}_{/\pr{B_{1},\dots,B_{n}}}\times_{\Delta^{1}}\{0\}.
\]
It will therefore suffice to show that the composite $\theta\circ\phi^{\rcone}\circ\psi^{\rcone}$
is an operadic $q$-colimit diagram. For this, we consider the diagram
% https://q.uiver.app/#q=WzAsOSxbMCwwLCIoXFxwcm9kX3tpPTF9Xm4oKFxcbWF0aGNhbHtNfV5cXG90aW1lcyBfe1xcbWF0aHJte2FjdH19KV97L0JfaX1cXHRpbWVzIF97XFxEZWx0YV4xfVxcezBcXH0pKV5cXHRyaWFuZ2xlcmlnaHQiXSxbMSwwLCIoKChcXG1hdGhjYWx7TX1eXFxvdGltZXMgX3tcXG1hdGhybXthY3R9fSleblxcdGltZXNfeyhcXERlbHRhXjEpXm59XFxEZWx0YV4xKV97LyhCXzEsXFxkb3RzICxCX24pfVxcdGltZXNfe1xcRGVsdGFeMX1cXHswXFx9KV5cXHRyaWFuZ2xlcmlnaHQiXSxbMCwxLCJcXHByb2Rfe2k9MX1ebigoXFxtYXRoY2Fse019Xlxcb3RpbWVzIF97XFxtYXRocm17YWN0fX0pX3svQl9pfVxcdGltZXMgX3tcXERlbHRhXjF9XFx7MFxcfSleXFx0cmlhbmdsZXJpZ2h0Il0sWzEsMSwiKFxcbWF0aGNhbHtNfV5cXG90aW1lcyBfe1xcbWF0aHJte2FjdH19KV5uXFx0aW1lc197KFxcRGVsdGFeMSlebn1cXERlbHRhXjEiXSxbMiwwLCIoKFxcbWF0aGNhbHtNfV5cXG90aW1lcyBfe1xcbWF0aHJte2FjdH19KV97L0J9XFx0aW1lcyBfe1xcRGVsdGFeMX1cXHswXFx9KV5cXHRyaWFuZ2xlcmlnaHQiXSxbMiwxLCJcXG1hdGhjYWx7TX1eXFxvdGltZXMgX3tcXG1hdGhybXthY3R9fSJdLFswLDIsIihcXG1hdGhjYWx7TX1eXFxvdGltZXMgX3tcXG1hdGhybXthY3R9fSlebiJdLFsxLDIsIihcXG1hdGhjYWx7Q31eXFxvdGltZXMgX3tcXG1hdGhybXthY3R9fSlebiJdLFsyLDIsIlxcbWF0aGNhbHtDfV5cXG90aW1lcyBfe1xcbWF0aHJte2FjdH19Il0sWzAsMl0sWzEsM10sWzAsMSwiXFxjb25nIl0sWzEsNCwiXFxzaW1lcSJdLFswLDEsIlxccHNpXlxcdHJpYW5nbGVyaWdodCIsMl0sWzEsNCwiXFxwaGleXFx0cmlhbmdsZXJpZ2h0IiwyXSxbNCw1XSxbMyw1LCJcXGJpZ29wbHVzX3tpPTF9Xm4iLDJdLFszLDYsIiIsMix7InN0eWxlIjp7InRhaWwiOnsibmFtZSI6Imhvb2siLCJzaWRlIjoiYm90dG9tIn19fV0sWzIsNl0sWzYsNywiXFxwcm9kX3tpPTF9Xm5GIiwyXSxbMyw3LCJHIl0sWzUsOCwiRiJdLFs3LDgsIlxcYmlnb3BsdXNfe2k9MX1ebiIsMl1d
\[\begin{tikzcd}[column sep = small, scale cd =.8]
	{(\prod_{i=1}^n((\mathcal{M}^\otimes _{\mathrm{act}})_{/B_i}\times _{\Delta^1}\{0\}))^\triangleright} & {(((\mathcal{M}^\otimes _{\mathrm{act}})^n\times_{(\Delta^1)^n}\Delta^1)_{/(B_1,\dots ,B_n)}\times_{\Delta^1}\{0\})^\triangleright} & {((\mathcal{M}^\otimes _{\mathrm{act}})_{/B}\times _{\Delta^1}\{0\})^\triangleright} \\
	{\prod_{i=1}^n((\mathcal{M}^\otimes _{\mathrm{act}})_{/B_i}\times _{\Delta^1}\{0\})^\triangleright} & {(\mathcal{M}^\otimes _{\mathrm{act}})^n\times_{(\Delta^1)^n}\Delta^1} & {\mathcal{M}^\otimes _{\mathrm{act}}} \\
	{(\mathcal{M}^\otimes _{\mathrm{act}})^n} & {(\mathcal{C}^\otimes _{\mathrm{act}})^n} & {\mathcal{C}^\otimes _{\mathrm{act}}}
	\arrow["\cong", from=1-1, to=1-2]
	\arrow["{\psi^\triangleright}"', from=1-1, to=1-2]
	\arrow[from=1-1, to=2-1]
	\arrow["\simeq", from=1-2, to=1-3]
	\arrow["{\phi^\triangleright}"', from=1-2, to=1-3]
	\arrow[from=1-2, to=2-2]
	\arrow[from=1-3, to=2-3]
	\arrow[from=2-1, to=3-1]
	\arrow["{\bigoplus_{i=1}^n}"', from=2-2, to=2-3]
	\arrow[hook', from=2-2, to=3-1]
	\arrow["G", from=2-2, to=3-2]
	\arrow["F", from=2-3, to=3-3]
	\arrow["{\prod_{i=1}^nF}"', from=3-1, to=3-2]
	\arrow["{\bigoplus_{i=1}^n}"', from=3-2, to=3-3]
\end{tikzcd}\]Here the map $G$ is the restriction of the map $\prod_{i=1}^{n}F$.
The left column and the top right square are commutative. The bottom
right square commutes up to natural equivalence by Proposition \ref{prop:oplus_p-limit}
and Corollary \ref{cor:oplus_p-limit}. Hence the diagram $\theta\circ\phi^{\rcone}\circ\psi^{\rcone}$
is naturally equivalent to the composite
\[
\theta':\pr{\prod_{i=1}^{n}\pr{\cal M_{\act}^{\t}}_{/B_{i}}\times_{\Delta^{1}}\{0\}}^{\rcone}\to\prod_{i=1}^{n}\pr{\pr{\cal M_{\act}^{\t}}_{/B_{i}}\times_{\Delta^{1}}\{0\}}^{\rcone}\to\pr{\cal C_{\act}^{\t}}^{n}\xrightarrow{\bigoplus_{i=1}^{n}}\cal C_{\act}^{\t}.
\]
Since the diagram $\theta'$ is an operadic $q$-colimit diagram by
Proposition \ref{prop:3.1.1.8}, we are done.
\end{proof}
We can now state the fundamental theorem of operadic Kan extensions.
\begin{thm}
\cite[Theorem 3.1.2.3]{HA}\label{thm:3.1.2.3} Let $n\geq1$, let
$p:\cal M^{\t}\to N\pr{\Fin_{\ast}}\times\Delta^{n}$ be a $\Delta^{n}$-family
of generalized $\infty$-operads, and let $q:\cal C^{\t}\to\cal O^{\t}$
be a fibration of $\infty$-operads. Consider a commutative diagram% https://q.uiver.app/?q=WzAsNCxbMCwwLCJcXG1hdGhjYWx7TX1eXFxvdGltZXMgXFx0aW1lcyBfe1xcRGVsdGFebn1cXExhbWJkYSBebl8wIl0sWzAsMSwiXFxtYXRoY2Fse019Xlxcb3RpbWVzICJdLFsxLDAsIlxcbWF0aGNhbHtDfV5cXG90aW1lcyAiXSxbMSwxLCJcXG1hdGhjYWx7T31eXFxvdGltZXMgLiJdLFswLDEsIiIsMCx7InN0eWxlIjp7InRhaWwiOnsibmFtZSI6Imhvb2siLCJzaWRlIjoidG9wIn19fV0sWzAsMiwiZl8wIl0sWzIsMywicSJdLFsxLDMsImciLDJdLFsxLDIsIiIsMCx7InN0eWxlIjp7ImJvZHkiOnsibmFtZSI6ImRhc2hlZCJ9fX1dXQ==
\[\begin{tikzcd}
	{\mathcal{M}^\otimes \times _{\Delta^n}\Lambda ^n_0} & {\mathcal{C}^\otimes } \\
	{\mathcal{M}^\otimes } & {\mathcal{O}^\otimes}
	\arrow[hook, from=1-1, to=2-1]
	\arrow["{f_0}", from=1-1, to=1-2]
	\arrow["q", from=1-2, to=2-2]
	\arrow["g"', from=2-1, to=2-2]
	\arrow[dashed, from=2-1, to=1-2]
\end{tikzcd}\]of simplicial sets. Suppose that for each vertex $i$ in $\Lambda_{0}^{n}$
and each vertex $j$ in $\Delta^{n}$, the induced maps $\cal M^{\t}\times_{\Delta^{n}}\{i\}\to\cal C^{\t}$
and $\cal M^{\t}\times_{\Delta^{n}}\{j\}\to\cal O^{\t}$ are morphisms
of $\infty$-operads.
\begin{itemize}
\item [(A)]If $n=1$, the following conditions are equivalent:
\begin{itemize}
\item [(a)]There is a dashed filler which is an operadic $q$-left Kan
extension of $f_{0}$.
\item [(b)]For each object $B\in\cal M\times_{\Delta^{1}}\{1\}$, the diagram
\[
\{0\}\times_{\Delta^{1}}\pr{\pr{\cal M_{\act}^{\t}}_{/B}}\to\{0\}\times_{\Delta^{1}}\cal M^{\t}\xrightarrow{f_{0}}\cal C^{\t}
\]
admits an operadic $q$-colimit cone which lifts the map
\[
\pr{\{0\}\times_{\Delta^{1}}\pr{\pr{\cal M_{\act}^{\t}}_{/B}}}^{\rcone}\to\pr{\cal M_{/B}^{\t}}^{\rcone}\to\cal M^{\t}\xrightarrow{g}\cal O^{\t}.
\]
\end{itemize}
\item [(B)]If $n>1$ and the restriction $f_{0}\vert\cal M^{\t}\times_{\Delta^{n}}\Delta^{\{0,1\}}$
is an operadic $q$-left Kan extension of $f_{0}\vert\cal M^{\t}\times_{\Delta^{n}}\{0\}$,
then there is a dashed arrow rendering the diagram commutative.
\end{itemize}
\end{thm}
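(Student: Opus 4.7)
For part (A), the implication (a)$\Rightarrow$(b) is essentially immediate from the definition of an operadic $q$-left Kan extension combined with Proposition \ref{prop:operadic_Kan_equiv}: if $F$ is such a Kan extension, the colimit condition holds by definition at each $B \in \mathcal{B}$, and the cited proposition upgrades it to each $B \in \mathcal{B}^\otimes$. I would dispatch this direction quickly. The substantive direction (b)$\Rightarrow$(a), together with part (B), I propose to treat in parallel as a single marked-simplicial-set lifting problem over $\mathcal{O}^\otimes$, solved by a transfinite construction along the lines of Lurie's Steps 1--3.

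In Step 1 (coherent homotopy), I would use Section \ref{sec:Direct_sum}---specifically Corollary \ref{cor:3.1.1.8} and Proposition \ref{prop:directsum_slice_equiv}---to rewrite the pointwise colimit condition in a form amenable to induction. For $B \in \mathcal{B}^\otimes$ lying over $\langle n \rangle$, the slice $(\mathcal{M}^\otimes_{\mathrm{act}})_{/B}$ is identified via direct sum decomposition with a product of slices over the inert components $B_i \in \mathcal{B}$, and this identification yields the equivalence between the two candidate operadic colimit diagrams that Lurie declares ``equivalent'' on p.~337. This is precisely the coherent homotopy data promised in the introduction.

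In Step 2 (filtration), I would filter $\mathcal{M}^\otimes$ by attaching the nondegenerate simplices not already lying in $\mathcal{M}^\otimes \times_{\Delta^n} \Lambda_0^n$, ordered first by the dimension of their image in $\Delta^n$ and then by their own dimension. At each stage, the resulting lifting problem is either (i) an inner horn fill or an isofibration condition in $\mathcal{C}^\otimes \to \mathcal{O}^\otimes$, solvable because $q$ is a fibration of $\infty$-operads, or (ii) a problem that, via the Step 1 identification, reduces to producing an operadic $q$-colimit cone with a prescribed image in $\mathcal{O}^\otimes$. In case (ii), hypothesis (b) in part (A), or the operadic left Kan extension hypothesis along $\Delta^{\{0,1\}}$ in part (B), supplies exactly the required extension.

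The main obstacle will be Step 3: verifying that the filtration in Step 2 cleanly partitions the arising lifting problems into cases (i) and (ii), and that in case (ii) the data to which the colimit hypothesis is applied is exactly of the form the hypothesis requires. Each attached nondegenerate simplex corresponds to a specific configuration of active and inert morphisms, and the matching of its boundary with the appropriate slice category is delicate and sensitive to the marking conventions. I expect the bulk of the write-up to consist of this careful bookkeeping rather than any conceptually deeper argument.
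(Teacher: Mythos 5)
Your high-level strategy---transfinite induction over a filtration, with the direct-sum machinery of Section~\ref{sec:Direct_sum} supplying the coherent equivalence between the slice over $B$ and the product of slices over the inert components $B_i$, and the combinatorics as the bottleneck---is the right shape, and your identification of the roles of Corollary~\ref{cor:3.1.1.8} and Proposition~\ref{prop:directsum_slice_equiv} is correct. But the concrete filtration you propose would not yield a succession of solvable lifting problems, and that is the heart of the argument, not a bookkeeping detail to be deferred.

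You propose to attach nondegenerate simplices of $\mathcal{M}^\otimes$ ``ordered first by the dimension of their image in $\Delta^n$ and then by their own dimension.'' That ordering attaches each simplex against its full boundary $\partial\Delta^k \hookrightarrow \Delta^k$, and the resulting lift against $q$ is not generally solvable: it is neither an inner-horn fill nor an operadic colimit extension, and the boundary is not approximated in a cofinal way by a slice of the form to which hypothesis (b) applies. The paper's filtration is of a fundamentally different kind. It lives on the base $N(\mathsf{Fin}_*)\times\Delta^{\{1,\dots,n\}}$ rather than on $\mathcal{M}^\otimes$ itself; it is organized by a classification of nondegenerate \emph{complete} simplices into five groups $G_{(1)},G_{(2)},G'_{(2)},G_{(3)},G'_{(3)}$ according to how their edges decompose into strongly inert, active, and neutral parts; it is pulled back to $\mathcal{M}^\otimes$ via the tail construction $S\mapsto\mathcal{M}^\otimes_S$; and the final stage requires a bespoke well-ordering satisfying condition ($\blacklozenge$), produced by the lexicographic ordering on the four statistics $u_{\mathrm{neut}},u_{\mathrm{act}},u_{\mathrm{oc}},u_{\mathrm{as}}$. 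None of this is recoverable from an ordering by dimensions, and without it the individual extension steps do not factor through Lemma~\ref{lem:3.1.2.5} and Proposition~\ref{prop:heads_and_tails}.

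Your taxonomy of the extension problems is also incomplete. Beyond the operadic colimit extensions (the $G_{(1)}$ simplices, Step~1) and the weak-categorical-equivalence case (the $G'_{(3)}$ simplices, Step~3 Case~2, via \cite[Lemma 2.4.4.6]{HA}), two further types appear, and neither is ``an inner horn fill or an isofibration condition'': extensions exploiting that the objects of $\mathcal{C}^\otimes_{\langle 0\rangle}$ are $q$-terminal (Step~2, handled by Corollary~\ref{cor:Step2}), and extensions along $q$-limit cones consisting of inert morphisms (Step~3 Case~1). All four are needed, and the filtration must be engineered so that each attaching map falls into exactly one of them.
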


The rest of this section is devoted to reproducing Lurie's proof of
the above theorem.

\subsection{\label{subsec:prelim}Preliminary Results}

In this subsection, we collect some results that will facilitate the
proof of Theorem \ref{prop:operadic_Kan_equiv}. These results appeared
as parts of Lurie's proof of Theorem \ref{thm:3.1.2.3}, but for the
purpose of exposition, we will state them independently. We recommend
the reader to skip this subsection and only come back to it when the
need arises.
\begin{prop}
\label{prop:relative_terminal_obj_in_slice}Let $p:\cal C\to\cal D$
be an inner fibration of $\infty$-categories, $K$ a simplicial set,
and $\overline{f}:K^{\rcone}\to\cal C$ a diagram. Set $f=\overline{f}\vert K$,
and let $q$ denote the functor $\cal C_{f/}\to\cal D_{pf/}$. If
$\overline{f}$ maps the cone point to a $p$-terminal object, then
$\overline{f}\in\cal C_{f/}$ is $q$-terminal.
\end{prop}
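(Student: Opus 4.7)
The plan is to unfold the definition of $q$-terminality and reduce it to the $p$-terminality of $C := \overline{f}(\infty)$. By definition, the vertex $\overline{f} \in \cal C_{f/}$ is $q$-terminal if and only if, for every $n \geq 1$, every lifting problem
\[\begin{tikzcd}
\partial\Delta^n \ar[r, "h_0"] \ar[d, hook] & \cal C_{f/} \ar[d, "q"] \\
\Delta^n \ar[r, "h"'] \ar[ur, dashed] & \cal D_{pf/}
\end{tikzcd}\]
with $h_0(\{n\}) = \overline{f}$ admits a solution. By the universal property of slice $\infty$-categories, this is equivalent to finding the dashed filler in a diagram
\[\begin{tikzcd}
K \star \partial\Delta^n \ar[r, "H_0"] \ar[d, hook] & \cal C \ar[d, "p"] \\
K \star \Delta^n \ar[r, "H"'] \ar[ur, dashed] & \cal D
\end{tikzcd}\]
in which $H_0|_K = f$ and $H_0|_{K \star \{n\}} = \overline{f}$; in particular the final vertex $n \in \Delta^n \subset K \star \Delta^n$ is sent to $C$.

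To construct the dashed map I would perform a skeletal induction along $K \star \partial\Delta^n \hookrightarrow K \star \Delta^n$, attaching the non-degenerate simplices of the complement one at a time in order of increasing dimension. By the standard description of non-degenerate simplices of a join, these are exactly the simplices $\sigma \star \Delta^n$ with $\sigma$ a non-degenerate simplex of $K$ (including $\sigma = \emptyset$). If $\dim \sigma = d$, then $\sigma \star \Delta^n \cong \Delta^{d+n+1}$ has facets of two kinds: those obtained by deleting a vertex of $\sigma$ lie in strictly lower filtration stages (up to collapsing a degeneracy of $K$), while those obtained by deleting a vertex of $\Delta^n$ lie in $K \star \partial\Delta^n$. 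Hence when we reach $\sigma \star \Delta^n$ its entire boundary has already been filled, and attaching it amounts to solving a lifting problem
\[\begin{tikzcd}
\partial\Delta^{d+n+1} \ar[r] \ar[d, hook] & \cal C \ar[d, "p"] \\
\Delta^{d+n+1} \ar[r] \ar[ur, dashed] & \cal D.
\end{tikzcd}\]

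The final vertex of $\sigma \star \Delta^n$ is the final vertex of its $\Delta^n$-factor, which is sent to $C$ by construction. Since $d + n + 1 \geq n \geq 1$ and $C$ is $p$-terminal, the required lift exists by the very definition of a $p$-terminal vertex (equivalently, by the fact that $\cal C_{/C} \to \cal C \times_{\cal D} \cal D_{/pC}$ is a trivial Kan fibration). Iterating over all non-degenerate $\sigma$ yields the desired filler. The only bookkeeping point requiring care is the treatment of facets $(d_i \sigma) \star \Delta^n$ when $d_i \sigma$ is a degenerate simplex of $K$: these are handled routinely using the explicit formula for degeneracies in $K \star \Delta^n$, which shows that such a facet is already a degeneracy of a previously attached cell of strictly smaller dimension.
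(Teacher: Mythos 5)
Your proof is correct and rests on the same essential idea as the paper's: translate $q$-terminality of $\overline{f}$, via the join/slice adjunction, into a lifting problem over $p$ whose final vertex is $\overline{f}(\infty) = C$, and then invoke the $p$-terminality of $C$. The paper's version is slightly more economical: instead of specializing to the generating inclusions $\partial\Delta^n \hookrightarrow \Delta^n$ and then re-deriving the right lifting property of a trivial fibration by a cell-by-cell induction on the non-degenerate simplices of $K\star\Delta^n$ relative to $K\star\partial\Delta^n$, it shows directly that for an arbitrary monomorphism $A\to B$ the lifting problem against $\pr{\cal C_{f/}}_{/\overline f}\to\cal C_{f/}\times_{\cal D_{pf/}}\pr{\cal D_{pf/}}_{/q\overline f}$ transposes to a lifting problem of the monomorphism $K\star A\to K\star B$ against the trivial fibration $\cal C_{/C}\to\cal C\times_{\cal D}\cal D_{/pC}$, which is solvable automatically. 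Your skeletal induction is carried out correctly (the identification of the relevant non-degenerate cells as $\sigma\star\Delta^n$ with $\sigma$ non-degenerate in $K$ or empty, the location of their boundaries, the treatment of degenerate faces, and the fact that the final vertex always lands on $C$ are all handled properly), so the proof stands; it is just a more explicit unfolding of what the paper compresses into the observation that trivial fibrations lift against all monomorphisms.
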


\begin{proof}
We must show that the map 
\[
\pr{\cal C_{f/}}_{/\overline{f}}\to\cal C_{f/}\times_{\cal D_{pf/}}\pr{\cal D_{pf/}}_{/q\overline{f}}
\]
is a trivial fibration. Let $C=\overline{f}\pr{\infty}$. Given a
monomorphism $A\to B$ of simplicial sets, a lifting problem on the
left hand side corresponds under adjunction to a lifting problem on
the right hand side:% https://q.uiver.app/?q=WzAsOCxbMCwwLCJBIl0sWzAsMSwiQiJdLFsxLDAsIihcXG1hdGhjYWx7Q31fe2YvfSlfey9cXG92ZXJsaW5le2Z9fSJdLFsxLDEsIlxcbWF0aGNhbHtDfV97Zi99XFx0aW1lcyBfe1xcbWF0aGNhbHtEfV97cGYvfX0oXFxtYXRoY2Fse0R9X3twZi99KV97L3FcXG92ZXJsaW5le2Z9fSJdLFsyLDAsIktcXHN0YXIgQSJdLFsyLDEsIktcXHN0YXIgQiJdLFszLDAsIlxcbWF0aGNhbHtDfV97L0N9Il0sWzMsMSwiXFxtYXRoY2Fse0N9XFx0aW1lcyBfe1xcbWF0aGNhbHtEfX1cXG1hdGhjYWx7RH1fey9wQ30iXSxbMCwyXSxbMiwzXSxbMCwxXSxbMSwzXSxbMSwyLCIiLDEseyJzdHlsZSI6eyJib2R5Ijp7Im5hbWUiOiJkYXNoZWQifX19XSxbNCw1XSxbNCw2XSxbNiw3XSxbNSw3XSxbNSw2LCIiLDEseyJzdHlsZSI6eyJib2R5Ijp7Im5hbWUiOiJkYXNoZWQifX19XV0=
\[\begin{tikzcd}
	A & {(\mathcal{C}_{f/})_{/\overline{f}}} & {K\star A} & {\mathcal{C}_{/C}} \\
	B & {\mathcal{C}_{f/}\times _{\mathcal{D}_{pf/}}(\mathcal{D}_{pf/})_{/q\overline{f}}} & {K\star B} & {\mathcal{C}\times _{\mathcal{D}}\mathcal{D}_{/pC}.}
	\arrow[from=1-1, to=1-2]
	\arrow[from=1-2, to=2-2]
	\arrow[from=1-1, to=2-1]
	\arrow[from=2-1, to=2-2]
	\arrow[dashed, from=2-1, to=1-2]
	\arrow[from=1-3, to=2-3]
	\arrow[from=1-3, to=1-4]
	\arrow[from=1-4, to=2-4]
	\arrow[from=2-3, to=2-4]
	\arrow[dashed, from=2-3, to=1-4]
\end{tikzcd}\]The right hand lifting problem admits a solution because $C$ is $p$-terminal.
\end{proof}
\begin{cor}
\label{cor:Step2}Let $q:\cal C^{\t}\to\cal O^{\t}$ be a fibration
of $\infty$-operads, let $K$ be a simplicial set, and let $n\geq0$.
Consider a lifting problem % https://q.uiver.app/?q=WzAsNCxbMCwwLCJLXFxzdGFyIFxccGFydGlhbFxcRGVsdGEgXm4iXSxbMCwxLCJLXFxzdGFyIFxcRGVsdGEgXm4iXSxbMSwwLCJcXG1hdGhjYWx7Q31eXFxvdGltZXMgIl0sWzEsMSwiXFxtYXRoY2Fse099Xlxcb3RpbWVzICJdLFswLDEsIiIsMCx7InN0eWxlIjp7InRhaWwiOnsibmFtZSI6Imhvb2siLCJzaWRlIjoidG9wIn19fV0sWzIsMywicSJdLFswLDIsImYiXSxbMSwzLCJnIiwyXSxbMSwyLCIiLDEseyJzdHlsZSI6eyJib2R5Ijp7Im5hbWUiOiJkYXNoZWQifX19XV0=
\[\begin{tikzcd}
	{K\star \partial\Delta ^n} & {\mathcal{C}^\otimes } \\
	{K\star \Delta ^n} & {\mathcal{O}^\otimes.}
	\arrow[hook, from=1-1, to=2-1]
	\arrow["q", from=1-2, to=2-2]
	\arrow["f", from=1-1, to=1-2]
	\arrow["g"', from=2-1, to=2-2]
	\arrow[dashed, from=2-1, to=1-2]
\end{tikzcd}\]If $g$ maps the terminal vertex of $\Delta^{n}$ to an object in
$\cal O_{\inp 0}^{\t}$, then the lifting problem admits a solution.
\end{cor}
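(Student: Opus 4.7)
The plan is to choose an object $\tilde{C}\in \cal C^{\t}_{\inp 0}$ to which the terminal vertex of $\Delta^n$ lifts, observe that $\tilde{C}$ is $q$-terminal, and then convert the lifting problem into one against the trivial Kan fibration supplied by this $q$-terminality, using the join-slice adjunction.

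First I will produce $\tilde{C}$. If $n\geq 1$, the terminal vertex $n$ lies in $\partial\Delta^n$, so I set $\tilde{C}=f(n)$; this lies in $\cal C^{\t}_{\inp 0}$ since $q\tilde{C}=g(n)\in \cal O^{\t}_{\inp 0}$ and $q$ respects the structure maps to $N(\Fin_{\ast})$. If $n=0$ I instead use that $\cal C^{\t}_{\inp 0}\to \cal O^{\t}_{\inp 0}$ is a categorical fibration between contractible Kan complexes, hence a trivial Kan fibration, to select $\tilde{C}$ with $q\tilde{C}=g(0)$. In either case $\tilde{C}$ is $q$-terminal: condition (b) in the definition of $\infty$-operad implies that every object of $\cal C^{\t}_{\inp 0}$ (resp.\ $\cal O^{\t}_{\inp 0}$) is terminal in $\cal C^{\t}$ (resp.\ $\cal O^{\t}$), so for every $X\in \cal C^{\t}$ both $\Map_{\cal C^{\t}}(X,\tilde{C})$ and $\Map_{\cal O^{\t}}(qX,q\tilde{C})$ are contractible, and therefore the canonical map
\[
\pi:\cal C^{\t}_{/\tilde{C}}\to \cal C^{\t}\times_{\cal O^{\t}}\cal O^{\t}_{/q\tilde{C}}
\]
is a trivial Kan fibration. (Alternatively, take $K=\emptyset$ in Proposition \ref{prop:relative_terminal_obj_in_slice}.)

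Next I reformulate the lifting problem via the join-slice adjunction. Set $L=K\star \Delta^{n-1}$ and $L_0=K\star \partial\Delta^{n-1}$, with the convention $\Delta^{-1}=\emptyset=\partial\Delta^{-1}$ (so that $L=K$ and $L_0=\emptyset$ when $n=0$). Then $K\star \Delta^n=L\star \{n\}$ and $K\star \partial\Delta^n=L\cup_{L_0}(L_0\star \{n\})$. The adjunction identifies maps $X\star \{n\}\to \cal C^{\t}$ sending $n$ to $\tilde{C}$ with maps $X\to \cal C^{\t}_{/\tilde{C}}$ (and similarly for $\cal O^{\t}$); applying it to all the given data converts the original lifting problem into
\[
\begin{tikzcd}
L_0 \arrow[r,"\hat{f}_0"]\arrow[d,hook] & \cal C^{\t}_{/\tilde{C}}\arrow[d,"\pi"]\\
L \arrow[r,"{(f|_L,\hat{g})}"']\arrow[ur,dashed] & \cal C^{\t}\times_{\cal O^{\t}}\cal O^{\t}_{/q\tilde{C}},
\end{tikzcd}
\]
where $\hat{f}_0$ and $\hat{g}$ come from the restrictions $f|_{L_0\star \{n\}}$ and $g|_{L\star \{n\}}$. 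Since $\pi$ is a trivial Kan fibration and $L_0\hookrightarrow L$ is a monomorphism, a dashed filler exists; unwinding the adjunction yields the desired extension $\overline{f}:K\star \Delta^n\to \cal C^{\t}$.

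The main technical point is the bookkeeping in the join-slice step: one must check that the pieces of $f$ and $g$ really do assemble into a commutative square with prescribed data on $L_0\hookrightarrow L$, and that a filler produces a map on all of $K\star \Delta^n$ whose restriction to $K\star \partial\Delta^n$ recovers $f$ (not merely its restriction to $L_0\star \{n\}$). The degenerate case $n=0$ slots in uniformly once the separate choice of $\tilde{C}$ is made.
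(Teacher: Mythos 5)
Your proof is correct, and it uses the same two ingredients as the paper's (terminality of objects over $\inp 0$, plus the join-slice adjunction), but it slices on the \emph{other} variable of the join. The paper first slices by $K$, reducing the problem to lifting $\partial\Delta^n\hookrightarrow\Delta^n$ against $\cal C^{\t}_{h/}\to\cal O^{\t}_{qh/}$ where $h=f\vert K$, and then invokes Proposition \ref{prop:relative_terminal_obj_in_slice} to carry the $q$-terminality of $f(n)$ through the coslice by $K$. You instead slice by the cone vertex $\{n\}$ directly, writing $K\star\Delta^n=(K\star\Delta^{n-1})\star\{n\}$ and $K\star\partial\Delta^n=(K\star\Delta^{n-1})\cup_{K\star\partial\Delta^{n-1}}(K\star\partial\Delta^{n-1})\star\{n\}$, which converts the problem to a lift of a monomorphism against the trivial fibration $\cal C^{\t}_{/\tilde{C}}\to\cal C^{\t}\times_{\cal O^{\t}}\cal O^{\t}_{/q\tilde{C}}$. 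This removes the need for Proposition \ref{prop:relative_terminal_obj_in_slice} entirely and also folds the $n=0$ case into the same lifting framework (it becomes $L_0=\emptyset$), where the paper treats $n=0$ and $n\geq1$ by two separately-phrased arguments. What the paper's route buys is that Proposition \ref{prop:relative_terminal_obj_in_slice} is stated and proved as a standalone reusable lemma; what yours buys is a single, slightly more direct lifting argument with a uniform treatment of all $n\geq 0$. One small quibble: the parenthetical ``alternatively take $K=\emptyset$ in Proposition \ref{prop:relative_terminal_obj_in_slice}'' is not quite an alternative argument for $q$-terminality of $\tilde{C}$ --- with $K=\emptyset$ the proposition's hypothesis and conclusion coincide --- but your direct argument via contractible mapping spaces is fine as written.
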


\begin{proof}
If $n=0$, find an object $C\in\cal C_{\inp 0}^{\t}$ which lies over
$g\pr 0$. Such an object exists because the functor $\cal C_{\inp 0}^{\t}\to\cal O_{\inp 0}^{\t}$
is a trivial fibration. The object $C$ is $q$-terminal, so the functor
\[
\cal C_{/C}^{\t}\to\cal O_{/q\pr C}^{\t}\times_{\cal O^{\t}}\cal C^{\t}
\]
is a trivial fibration. This implies the existence of the filler.

If $n>1$, then set $h=f\vert K$. It will suffice to solve the associated
lifting problem % https://q.uiver.app/?q=WzAsNCxbMCwwLCJcXHBhcnRpYWxcXERlbHRhIF5uIl0sWzAsMSwiXFxEZWx0YSBebiJdLFsxLDAsIlxcbWF0aGNhbHtDfV5cXG90aW1lcyBfe2gvfSJdLFsxLDEsIlxcbWF0aGNhbHtPfV5cXG90aW1lc197cWgvfSJdLFswLDEsIiIsMCx7InN0eWxlIjp7InRhaWwiOnsibmFtZSI6Imhvb2siLCJzaWRlIjoidG9wIn19fV0sWzIsMywicSciXSxbMCwyXSxbMSwzXSxbMSwyLCIiLDEseyJzdHlsZSI6eyJib2R5Ijp7Im5hbWUiOiJkYXNoZWQifX19XV0=
\[\begin{tikzcd}
	{\partial\Delta ^n} & {\mathcal{C}^\otimes _{h/}} \\
	{\Delta ^n} & {\mathcal{O}^\otimes_{qh/}.}
	\arrow[hook, from=1-1, to=2-1]
	\arrow["{q'}", from=1-2, to=2-2]
	\arrow[from=1-1, to=1-2]
	\arrow[from=2-1, to=2-2]
	\arrow[dashed, from=2-1, to=1-2]
\end{tikzcd}\]To solve this lifting problem, it suffices to show that the image
of the vertex $n\in\partial\Delta^{n}$ under the top horizontal arrow
is $q'$-terminal. This follows from Proposition \ref{prop:relative_terminal_obj_in_slice}.
\end{proof}
\begin{defn}
\label{def:head}Let $X$ be a simplicial set over $\Delta^{1}$.
Given a simplex $\Delta^{k}\to X$, we define the \textbf{head} of
$X$ to be the simplex $\Delta^{u^{-1}\pr 1}\to\Delta^{k}\to X$ and
the \textbf{tail} to be the simplex $\Delta^{u^{-1}\pr 0}\to\Delta^{k}\to X$,
where $u$ denotes the composite $\Delta^{k}\to X\to\Delta^{1}$.
\end{defn}

\begin{prop}
\label{prop:heads_and_tails}Let % https://q.uiver.app/?q=WzAsMyxbMCwwLCJYIl0sWzIsMCwiWSJdLFsxLDEsIlxcRGVsdGFeMSJdLFswLDEsInAiXSxbMSwyXSxbMCwyXV0=
\[\begin{tikzcd}
	X && Y \\
	& {\Delta^1}
	\arrow["p", from=1-1, to=1-3]
	\arrow[from=1-3, to=2-2]
	\arrow[from=1-1, to=2-2]
\end{tikzcd}\]be a commutative diagram of simplicial sets. Let $n\geq0$, and let
$S\subset S'\subset Y_{1}=Y\times_{\Delta^{1}}\{1\}$ be simplicial
subsets satisfying the following conditions:
\begin{enumerate}
\item The simplicial set $S$ contains the $\pr{n-1}$-skeleton of $Y_{1}$.
\item The simplicial set $S'$ is generated by $S$ and a set $\Sigma$
of nondegenerate $n$-simplices of $Y_{1}$ that do not belong to
$S$.
\end{enumerate}
Let $X\pr S$ denote the simplicial subset of $X$ spanned by the
simplices whose head lies over $S$, and define $X\pr{S'}$ similarly.
Let $\{\sigma_{a}\}_{a\in A}$ be an enumeration of all nondegenerate
simplices of $X_{1}$ whose image in $Y$ is a degeneration of a simplex
in $\Sigma$. Choose a well-ordering of $A$ so that the dimension
of $\sigma_{a}$ is a non-decreasing function of $a\in A$. For each
$a\in A$, define simplicial sets $X\pr{S'}_{<a}$, $X\pr{S'}_{\leq a}$,
$K_{a}$, and $K_{0,a}$ as follows:
\begin{itemize}
\item $X\pr{S'}_{<a}\subset X$ is the simplicial subset generated by $X\pr S$
and those simplices of $X$ whose head factors through $\sigma_{b}$
for some $b<a$.
\item $X\pr{S'}_{\leq a}\subset X$ is the simplicial subset generated by
$X\pr S$ and those simplices of $X$ whose head factors through $\sigma_{b}$
for some $b\leq a$.
\item $K_{a}\subset X$ is the simplicial subset consisting of the simplices
whose head factors through $\sigma_{a}$.
\item $K_{0,a}\subset X$ is the simplicial subset consisting of the simplices
whose head factors through $\partial\sigma_{a}=\sigma_{a}\vert\partial\Delta^{\dim\sigma_{a}}$.
\end{itemize}
Then the following holds:
\begin{enumerate}
\item $X\pr{S'}=X\pr S\cup\bigcup_{a\in A}X\pr{S'}_{\leq a}$.
\item For each $a\in A$, we have $K_{0,a}\subset X\pr{S'}_{<a}$, and the
square % https://q.uiver.app/?q=WzAsNCxbMCwwLCJLX3swLGF9Il0sWzAsMSwiS19hIl0sWzEsMCwiWChTJylfezxhfSJdLFsxLDEsIlgoUycpX3tcXGxlcSBhfSJdLFswLDFdLFswLDJdLFsyLDNdLFsxLDNdXQ==
\begin{equation}\label{d:h_and_t}
\begin{tikzcd}
	{K_{0,a}} & {X(S')_{<a}} \\
	{K_a} & {X(S')_{\leq a}}
	\arrow[from=1-1, to=2-1]
	\arrow[from=1-1, to=1-2]
	\arrow[from=1-2, to=2-2]
	\arrow[from=2-1, to=2-2]
\end{tikzcd}
\end{equation}of simplicial sets is cocartesian.
\end{enumerate}
\end{prop}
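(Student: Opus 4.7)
The plan is to verify (1) and (2) by direct combinatorial bookkeeping, with Eilenberg--Zilber's lemma doing the essential work. For (1), take a simplex $\tau$ of $X\pr{S'}$ with head $h$. Its image $p(h)$ in $Y_{1}$ lies in $S'$, so by the hypothesis on $S'$ either $p(h)\in S$---in which case $\tau\in X\pr S$---or $p(h)$ is a degeneration of some $\sigma\in\Sigma$. In the latter case, write the Eilenberg--Zilber factorization $h=\sigma_{a}\varepsilon$ with $\sigma_{a}$ nondegenerate in $X_{1}$ and $\varepsilon$ surjective; applying $p$ and comparing with the expression of $p(h)$ as a degeneration of $\sigma$ forces $p(\sigma_{a})$ to be itself a degeneration of $\sigma$, so $\sigma_{a}$ appears among the enumerated family and $\tau\in X\pr{S'}_{\leq a}$.

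For (2), all four arrows in the square are monomorphisms, so it is cocartesian if and only if
\begin{equation*}
X\pr{S'}_{<a}\cup K_{a}=X\pr{S'}_{\leq a}\qquad\text{and}\qquad X\pr{S'}_{<a}\cap K_{a}=K_{0,a}.
\end{equation*}
The first equality is immediate from the definitions, as is the containment $K_{0,a}\subseteq X\pr{S'}_{<a}\cap K_{a}$; the nontrivial point is $X\pr{S'}_{<a}\cap K_{a}\subseteq K_{0,a}$.

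So take $\tau$ with head $h=\sigma_{a}\alpha$, and suppose either (a) $p(h)\in S$, or (b) $h=\sigma_{b}\beta$ for some $b<a$. I must show that $\alpha$ is not surjective on vertices, so that $h$ factors through $\partial\sigma_{a}$. In case (a), writing $p(\sigma_{a})=\sigma\varepsilon$ with $\sigma\in\Sigma$ and $\varepsilon$ surjective, a standard Eilenberg--Zilber argument---using that $\sigma\notin S$ and $S$ is a simplicial subset of $Y_{1}$ containing the $(n-1)$-skeleton---shows that $\sigma\circ(\varepsilon\alpha)$ lies in $S$ only when $\varepsilon\alpha$ fails to be surjective, and since $\varepsilon$ itself is surjective this forces $\alpha$ to be non-surjective. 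In case (b), if both $\alpha$ and $\beta$ were surjective, Eilenberg--Zilber would yield $\sigma_{a}=\sigma_{b}$, contradicting $a\neq b$; hence if $\alpha$ is surjective, $\beta$ factors as $\delta_{b}\varepsilon_{b}$ with $\delta_{b}$ a proper injection, and then the nondegenerate representative of $h$ equals both $\sigma_{a}$ and the nondegenerate representative of the proper face $\sigma_{b}\delta_{b}$ of $\sigma_{b}$, which gives $\dim\sigma_{a}<\dim\sigma_{b}$---in contradiction with the monotone ordering $\dim\sigma_{b}\leq\dim\sigma_{a}$.

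The main obstacle is this case analysis, where one must juggle Eilenberg--Zilber decompositions of the head through two different nondegenerate simplices; the choice of non-decreasing ordering on $A$ is crucial, as it is precisely what converts the relation ``$\sigma_{a}$ is the nondegenerate part of a proper face of $\sigma_{b}$'' into the required contradiction.
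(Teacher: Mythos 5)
Your argument for part (1) and for the containment $X(S')_{<a}\cap K_{a}\subseteq K_{0,a}$ matches the paper's proof in substance (contrapositive phrasing aside): both rest on the same Eilenberg--Zilber decompositions and the non-decreasing dimension ordering on $A$. However, you dismiss the containment $K_{0,a}\subseteq X(S')_{<a}$ as immediate, and it is not. A simplex $\tau$ of $X$ whose head $h$ factors through $\partial\sigma_{a}$ need not visibly lie in $X(S)$ or in any $K_{b}$ with $b<a$; you must argue as follows. Write $p(\sigma_{a})=\sigma\varepsilon$ with $\sigma\in\Sigma$ nondegenerate and $\varepsilon$ surjective, so $p(h)=\sigma(\varepsilon\iota)$ for the non-surjective factorization $\iota$ of $h$ through $\Delta^{\dim\sigma_{a}}$. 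If $\varepsilon\iota$ is not surjective, then $p(h)$ lies in the $(n-1)$-skeleton of $Y_{1}$, hence in $S$, and $\tau\in X(S)$. If $\varepsilon\iota$ is surjective, take the Eilenberg--Zilber decomposition $h=\tau' s$ with $\tau'$ nondegenerate; the usual comparison shows $p(\tau')$ is a degeneration of $\sigma$, so $\tau'=\sigma_{b}$ for some $b$, and since $h$ factors through a proper face of $\sigma_{a}$ one gets $\dim\sigma_{b}<\dim\sigma_{a}$, whence $b<a$ and $\tau\in K_{b}\subseteq X(S')_{<a}$. This is precisely the lemma the paper establishes before proving the square is a pushout; without it the map $K_{0,a}\to X(S')_{<a}$ in the square is not even defined. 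The gap is filled by exactly the style of argument you already deploy in the rest of your proof, so this is an oversight rather than a wrong idea, but it does need to be supplied.
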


\begin{proof}
We start with (1). The containment $X\pr S\cup\bigcup_{a\in A}X\pr{S'}_{\leq a}\subset X\pr{S'}$
holds trivially. For the reverse inclusion, let $x$ be an arbitrary
simplex of $X\pr{S'}$. We must show that $x$ belongs to $X\pr S\cup\bigcup_{a\in A}X\pr{S'}_{\leq a}$.
Let $\eta\pr x:\Delta^{m}\to X_{1}$ denote the head of $x$. If $p\eta\pr x$
belongs to $S$, then $x$ belongs to $X\pr S$, and we are done.
If $\eta\pr x$ does not belong to $S$, then $p\eta\pr x$ factors
through a simplex $\sigma$ in $\Sigma$. If $p\eta\pr x$ factors
through the boundary of $\sigma$, then $p\eta\pr x$ belongs to the
$\pr{n-1}$-skeleton of $Y_{1}$ and hence $\eta\pr x$ belongs to
$S$, a contradiction. Therefore, $p\eta\pr x$ is a degeneration
of $\sigma$. Now $\eta\pr x$ is a degeneration of some nondegenerate
simplex $\eta'\pr x$ of $X_{1}$. Then $p\eta\pr x$ is a degeneration
of $p\eta'\pr x$, so $p\eta'\pr x$ is a degeneration of $\sigma$.
Hence $\eta'\pr x=\sigma_{a}$ for some $a\in A$, so that $x\in X\pr{S'}_{\leq a}$.
This proves (1).

We next prove (2). We will write $K_{0}=K_{0,a}$ and $K=K_{a}$.
First we show that $X\pr{S'}_{<a}$ contains $K_{0}$. Suppose we
are given a simplex $x$ of $X$ whose head $\eta\pr x:\Delta^{m}\to X_{1}$
factors through $\partial\sigma_{a}$. We wish to show that $x$ belongs
to $X\pr{S'}_{<a}$. By construction, there is a commutative diagram
% https://q.uiver.app/#q=WzAsNixbMSwwLCJcXERlbHRhXntcXGRpbVxcc2lnbWFfYX0iXSxbMiwwLCJcXERlbHRhXm4iXSxbMiwxLCJZXzEsIl0sWzEsMSwiWF8xIl0sWzAsMCwiXFxwYXJ0aWFsXFxEZWx0YV57XFxkaW1cXHNpZ21hX2F9Il0sWzAsMSwiXFxEZWx0YV5tIl0sWzAsMSwicyJdLFsxLDIsIlxcc2lnbWEiXSxbMywyLCJwIiwyXSxbMCwzLCJcXHNpZ21hX2EiLDJdLFs0LDAsIiIsMCx7InN0eWxlIjp7InRhaWwiOnsibmFtZSI6Imhvb2siLCJzaWRlIjoidG9wIn19fV0sWzUsMywiXFxldGEoeCkiLDJdLFs1LDRdXQ==
\[\begin{tikzcd}
	{\partial\Delta^{\dim\sigma_a}} & {\Delta^{\dim\sigma_a}} & {\Delta^n} \\
	{\Delta^m} & {X_1} & {Y_1,}
	\arrow[hook, from=1-1, to=1-2]
	\arrow["s", from=1-2, to=1-3]
	\arrow["{\sigma_a}"', from=1-2, to=2-2]
	\arrow["\sigma", from=1-3, to=2-3]
	\arrow[from=2-1, to=1-1]
	\arrow["{\eta(x)}"', from=2-1, to=2-2]
	\arrow["p"', from=2-2, to=2-3]
\end{tikzcd}\]where $s$ is surjective on vertices, and where $\sigma\in\Sigma$.
If the map $\Delta^{m}\to\Delta^{n}$ is not surjective on vertices,
then $p\eta\pr x$ factors through the $\pr{n-1}$-skeleton of $Y_{1}$
and hence $x$ belongs to $X\pr S$. If the map $\Delta^{m}\to\Delta^{n}$
is surjective on vertices, then we write $\eta\pr x$ as a degeneration
of a nondegenerate simplex $\eta'\pr x$ of $X_{1}$. Since $\sigma$
is nondegenerate, $p\eta'\pr x$ is a degeneration of $\sigma$. Thus
$\eta'\pr x=\sigma_{b}$ for some $b\in A$. Since $\eta\pr x$ factors
through the boundary of $\sigma_{a}$, the dimension of $\sigma_{b}$
is strictly smaller than that of $\sigma_{a}$. Thus $b<a$. Hence
$x$ belongs to $X\pr{S'}_{<a}$, as desired.

Next, we show that the square (\ref{d:h_and_t}) is cocartesian. By
definition, $X\pr{S'}_{\leq a}$ is the union of $K$ and $X\pr{S'}_{<a}$.
Therefore, it suffices to show that $K_{0}$ is the intersection of
$K$ and $X\pr{S'}_{<a}$. So let $x$ be a simplex of $K$. We must
show that, if $x$ does not belong to $K_{0}$, then $x$ does not
belong to $X\pr{S'}_{<a}$ either. Assume, to the contrary, that $x$
belongs to $X\pr{S'}_{<a}$. Let $\eta\pr x:\Delta^{m}\to X_{1}$
be the head of $x$. Since $x$ does not belong to $K_{0}$, the simplex
$\eta\pr x$ is a degeneration of $\sigma_{a}$. Thus $p\pr{\eta\pr x}$
is a degeneration of some simplex $\sigma\in\Sigma$. In particular,
$p\pr{\eta\pr x}$ does not belong to $S$, so $\eta\pr x$ does not
belong to $X\pr S$. Since $x$ belongs to $X\pr{S'}_{<a}$ by hypothesis,
this means that $\eta\pr x$ factors through some $\sigma_{b}$ for
some $b<a$. If $\eta\pr x$ factors through $\partial\sigma_{b}$,
then we would have $\dim\sigma_{a}<\dim\sigma_{b}$, a contradiction.
So $\eta\pr x$ is a degeneration of $\sigma_{b}$; but then $a=b$,
a contradiction. Thus $x$ does not belong to $X\pr{S'}_{<a}$, as
claimed.
\end{proof}

\subsection{\label{subsec:Lurie's_proof}Lurie's Proof of Theorem \ref{thm:3.1.2.3}}

We now reproduce Lurie's proof of Theorem \ref{thm:3.1.2.3}. Along
the way, we will see that some parts of the proof merit further justifications.
We will explicitly state these parts as lemmas, and we will give proofs
to them in Section \ref{sec:leftover}. We stress that the argument
in this subsection is due to Lurie unless explicitly stated otherwise.

The proof proceeds by a simplex-by-simplex argument. For this, we
will classify simplices of $N\pr{\Fin_{\ast}}\times\Delta^{\{1,\dots,n\}}$
into five (somewhat artificial) groups, denoted by $G_{\pr 1},G_{\pr 2},G'_{\pr 2},G_{\pr 3},$and
$G'_{\pr 3}$.
\begin{defn}
Let $n\geq1$ and let $\alpha$ be a morphism in $N\pr{\Fin_{\ast}}\times\Delta^{\{1,\dots,n\}}$
with image $\alpha_{0}:\inp m\to\inp n$ in $N\pr{\Fin_{\ast}}$.
We say that $\alpha$ is:
\begin{enumerate}
\item \textbf{active} if $\alpha_{0}$ is active;
\item \textbf{strongly inert} if $\alpha_{0}$ is inert, the induced injection
$\inp n^{\circ}\to\inp m^{\circ}$ is order-preserving, and the image
of $\alpha$ in $\Delta^{\{1,\dots,n\}}$ is degenerate; and
\item \textbf{neutral} if it is neither active nor strongly inert.
\end{enumerate}
Note that active morphisms and strongly inert morphisms are closed
under composition. Also, every morphism in $N\pr{\Fin_{\ast}}\times\Delta^{\{1,\dots,n\}}$
can be factored uniquely as a composition of a strongly inert map
followed by an active map.

Let $\sigma$ be an $m$-simplex of $N\pr{\Fin_{\ast}}\times\Delta^{\{1,\dots,n\}}$
depicted as
\[
\pr{\inp{a_{0}},e_{0}}\xrightarrow{\alpha_{\sigma}\pr 1}\cdots\xrightarrow{\alpha_{\sigma}\pr m}\pr{\inp{a_{m}},e_{m}}.
\]
We will say that $\sigma$ is \textbf{closed} if $k_{m}=1$, and \textbf{open}
otherwise. We say that $\sigma$ is \textbf{complete}\footnote{Lurie uses te term ``new'' instead of ``complete.''}
if $\{e_{0},\dots,e_{m}\}=\{1,\dots,n\}$ and \textbf{incomplete}
otherwise. Note that every nondegenerate simplex of $N\pr{\Fin_{\ast}}\times\Delta^{\{1,\dots,n\}}$
is a face of a nondegenerate complete simplex. 

We partition the set of nondegenerate complete simplices of $N\pr{\Fin_{\ast}}\times\Delta^{\{1,\dots n\}}$
into five groups $G_{\pr 1},G_{\pr 2},G'_{\pr 2},G_{\pr 3},G_{\pr 3}'$
as follows: Let $\sigma$ be a nondegenerate complete $m$-simplex
of $N\pr{\Fin_{\ast}}\times\Delta^{\{1,\dots,n\}}$. Write $\alpha_{\sigma}\pr i=\sigma\vert\Delta^{\{i-1,i\}}$.
Let $0\le k\leq m$ be the minimal integer such that $\alpha_{\sigma}\pr i$
is strongly inert for every $i>k$, and let $0\leq j\leq k$ be the
minimal integer such that $\alpha_{\sigma}\pr i$ is active for every
$j<i\leq k$.
\begin{itemize}
\item If $j=0$, $k=m$, and $\sigma$ is closed, then $\sigma$ belongs
to $G_{\pr 1}$.
\item If $j=0$, $k<m$, and $\sigma$ is closed, then $\sigma$ belongs
to $G_{\pr 2}$.
\item If $j=0$ and $\sigma$ is open, then $\sigma$ belongs to $G'_{\pr 2}$.
\item If $j\geq1$ and $\alpha_{\sigma}\pr j$ is strongly inert, then $\sigma$
belongs to $G_{\pr 3}$.
\item If $j\geq1$ and $\alpha_{\sigma}\pr j$ is neutral, then $\sigma$
belongs to $G_{\pr 3}'$.
\end{itemize}
The situation is summarized in Table \ref{tab:G}, in which we used
the symbols $\mono$ and $\rightsquigarrow$ for strongly inert maps
and active maps, respectively.

We say that an $m$-simplex $\sigma\in G_{\pr 2}$ is an \textbf{associate}
of $\sigma'\in G_{\pr 2}'$ if $\sigma'=\sigma\vert\Delta^{\{0,\dots,m-1\}}$.
We also say that an $m$-simplex $\sigma\in G_{\pr 3}$ is an \textbf{associate}
of $\sigma'\in G'_{\pr 3}$ if $\sigma'=\sigma\partial_{j}\in G_{\pr 3}'$,
where $j$ is the integer defined above.
\end{defn}

\begin{table}[h]
\centering
\resizebox{\columnwidth}{!}{%
\begin{tabular}{|c|c|c|}
\hline 
Types & Pictures & Remarks\tabularnewline
\hline 
\hline 
$G_{\pr 1}$ & $\pr{\inp{a_{0}},e_{0}}\rightsquigarrow\cdots\rightsquigarrow\pr{\inp 1,e_{m}}$ & \tabularnewline
\hline 
$G_{\pr 2}$ & $\pr{\inp{a_{0}},e_{0}}\rightsquigarrow\cdots\rightsquigarrow\pr{\inp{a_{k}},e_{k}}\underset{\neq\emptyset}{\underbrace{\mono\cdots\mono}}\pr{\inp 1,e_{m}}$ & $k<m$.\tabularnewline
\hline 
$G'_{\pr 2}$ & $\pr{\inp{a_{0}},e_{0}}\rightsquigarrow\cdots\rightsquigarrow\pr{\inp{a_{k}},e_{k}}\mono\cdots\mono\pr{\inp{a_{m}},e_{m}}$ & $a_{m}\neq1,0\leq k\leq m$.\tabularnewline
\hline 
$G_{\pr 3}$ & $\cdots\mono\pr{\inp{a_{j}},e_{j}}\underset{\neq\emptyset}{\underbrace{\rightsquigarrow\cdots\rightsquigarrow}}\pr{\inp{a_{k}},e_{k}}\mono\cdots\mono\pr{\inp{a_{m}},e_{m}}$ & $1\leq j<k\leq m$.\tabularnewline
\hline 
$G'_{\pr 3}$ & $\cdots\xrightarrow{\text{neutral}}\pr{\inp{a_{j}},e_{j}}\rightsquigarrow\cdots\rightsquigarrow\pr{\inp{a_{k}},e_{k}}\mono\cdots\mono\pr{\inp{a_{m}},e_{m}}$ & $1\leq j\leq k\leq m$.\tabularnewline
\hline 
\end{tabular}%
}

\caption{Graphical Presentation of $G_{(0)}, G_{(1)}, G_{(2)}, G'_{(2)}, G_{(3)}, G'_{(3)}$}
\label{tab:G}
\end{table}

\begin{proof}
[Proof of Theorems \ref{thm:3.1.2.3}]We will regard $\cal M^{\t}$
and $N\pr{\Fin_{\ast}}\times\Delta^{n}$ as simplicial sets over $\Delta^{1}$
by means of the map $\Delta^{n}\to\Delta^{1}$ which maps the vertex
$0\in\Delta^{n}$ to the vertex $0\in\Delta^{1}$ and the remaining
vertices to the vertex $1\in\Delta^{1}$. Given a simplicial subset
$S\subset N\pr{\Fin_{\ast}}\times\Delta^{\{1,\dots,n\}}$, we let
$\cal M_{S}^{\t}\subset\cal M^{\t}$ denote the simplicial subset
consisting of those simplices whose head (Definition \ref{def:head})
lies over $S$.

The implication (a)$\implies$(b) for part (A) is obvious. Assume
therefore that condition (b) is satisfied if $n=1$. For each $m\geq0$,
let $F\pr m$ denote the simplicial subset of $N\pr{\Fin_{\ast}}\times\Delta^{\{1,\dots,n\}}$
generated by the nondegenerate simplices $\sigma$ satisfying one
of the following conditions:
\begin{itemize}
\item $\sigma$ is incomplete.
\item $\sigma$ has dimension less than $m$.
\item $\sigma$ has dimension $m$ and belongs to $G_{\pr 2}$ or $G_{\pr 3}$.
\end{itemize}
Observe that $\cal M_{F\pr 0}^{\t}=\cal M^{\t}\times_{\Delta^{n}}\Lambda_{0}^{n}$.
We will complete the proof by inductively constructing a map $f_{m}:\cal M_{F\pr m}^{\t}\to\cal C^{\t}$
which makes the diagram % https://q.uiver.app/?q=WzAsNCxbMCwwLCJcXG1hdGhjYWx7TX1eXFxvdGltZXMgX3tGKG0tMSl9Il0sWzAsMSwiXFxtYXRoY2Fse019Xlxcb3RpbWVzIF97RihtKX0iXSxbMSwxLCJcXG1hdGhjYWx7T31eXFxvdGltZXMgIl0sWzEsMCwiXFxtYXRoY2Fse0N9Xlxcb3RpbWVzIl0sWzAsMSwiIiwwLHsic3R5bGUiOnsidGFpbCI6eyJuYW1lIjoiaG9vayIsInNpZGUiOiJ0b3AifX19XSxbMSwyLCJnXFx2ZXJ0XFxtYXRoY2Fse019Xlxcb3RpbWVzIF97RihtKX0iLDJdLFszLDIsInEiXSxbMCwzLCJmX3ttLTF9Il0sWzEsMywiZl97bX0iLDFdXQ==
\[\begin{tikzcd}
	{\mathcal{M}^\otimes _{F(m-1)}} & {\mathcal{C}^\otimes} \\
	{\mathcal{M}^\otimes _{F(m)}} & {\mathcal{O}^\otimes }
	\arrow[hook, from=1-1, to=2-1]
	\arrow["{g\vert\mathcal{M}^\otimes _{F(m)}}"', from=2-1, to=2-2]
	\arrow["q", from=1-2, to=2-2]
	\arrow["{f_{m-1}}", from=1-1, to=1-2]
	\arrow["{f_{m}}"{description}, from=2-1, to=1-2]
\end{tikzcd}\]commutative, and such that $f_{1}$ has the following special properties
if $n=1$:
\begin{itemize}
\item [(i)]For each object $B\in\cal M\times_{\Delta^{1}}\{1\}$, the map
\[
\pr{\pr{\cal M_{\act}^{\t}}_{/B}\times_{\Delta^{1}}\{0\}}^{\rcone}\to\cal M_{F\pr 1}^{\t}\xrightarrow{f_{1}}\cal C^{\t}
\]
is an operadic $q$-colimit diagram.
\item [(ii)]For every inert morphism $e:M'\to M$ in $\cal M^{\t}\times_{\Delta^{1}}\{1\}$
such that $M\in\cal M$, the functor $f_{1}$ carries $e$ to an inert
morphism in $\cal C^{\t}$.
\end{itemize}
Fix $m>0$, and suppose that $f_{m-1}$ has been constructed. Observe
that $F\pr m$ is obtained from $F\pr{m-1}$ by adjoining the following
simplices:
\begin{itemize}
\item The $\pr{m-1}$-simplices in $G_{\pr 1}$.
\item The $\pr{m-1}$-simplices in $G'_{\pr 2}$ without associates.
\item The $m$-simplices in $G_{\pr 2}$ and $G_{\pr 3}$.
\end{itemize}
We define simplicial subsets $F'\pr m\subset F''\pr m\subset F\pr m$
as follows: $F'\pr m$ is generated by $F\pr{m-1}$ and the $\pr{m-1}$-simplices
in $G_{\pr 1}$; $F''\pr m$ is generated by $F'\pr m$ and the $\pr{m-1}$-simplices
of $G'_{\pr 2}$ without associates. Our strategy is to extend $f_{m-1}$
to $\cal M_{F'\pr m}^{\t}$, then to $\cal M_{F''\pr m}^{\t}$, and
then to $\cal M_{F\pr m}^{\t}$. 

\begin{enumerate}[label=(\textbf{Step \arabic*}),wide =0.5\parindent, listparindent=1.5em]

\item We will extend $f_{m-1}$ to a map $f'_{m}:\cal M_{F'\pr m}^{\t}\to\cal C^{\t}$
over $\cal O^{\t}$. To get an idea of how we will proceed, recall
that $F'\pr m$ is obtained from $F\pr{m-1}$ by adjoining the $\pr{m-1}$-simplices
in $G_{\pr 1}$, i.e., nondegenerate complete simplices consisting
of active maps and ending at $\pr{\inp 1,n}$. Roughly speaking, the
extension is possible because operadic colimits are defined in terms
of the active parts of the relevant $\infty$-operads, and $G_{\pr 1}$
stays in this realm.

The actual argument proceeds as follows: Let $\{\sigma_{a}\}_{a\in A}$
be the collection of nondegenerate simplices of $\cal M^{\t}\times_{\Delta^{n}}\Delta^{\{1,\dots,n\}}$
whose image in $N\pr{\Fin_{\ast}}\times\Delta^{\{1,\dots,n\}}$ is
a degeneration of some $\pr{m-1}$-simplex of $G_{\pr 1}$. Choose
a well-ordering on $A$ such that $\dim\sigma_{a}$ is non-decreasing
in $a\in A$. For each $a\in A$, let $\cal M_{<a}^{\t}$ denote the
simplicial subset spanned by $\cal M_{F\pr{m-1}}^{\t}$ and the simplices
of $\cal M^{\t}$ whose head factors through $\sigma_{b}$ for some
$b<a$. We define $\cal M_{\leq a}^{\t}$ similarly. According to
Proposition \ref{prop:heads_and_tails}, we have $\cal M_{F'\pr m}^{\t}=\cal M_{F\pr{m-1}}^{\t}\cup\bigcup_{a\in A}\cal M_{\leq a}^{\t}$,
so it suffices to extend $f_{m-1}$ to an $A$-sequence $f^{\leq a}:\cal M_{\leq a}^{\t}\to\cal C^{\t}$
over $\cal O^{\t}$. 

The construction is inductive. Let $a\in A$, and suppose that $f^{\leq b}$
has been constructed for $b<a$. These maps determine a map $f^{<a}:\cal M_{<a}^{\t}\to\cal C^{\t}$
extending $f_{m-1}$. Let $K_{0}\subset K\subset\cal M^{\t}$ denote
the simplicial subset consisting of the simplices of $\cal M^{\t}$
whose head factors through $\sigma_{a}\vert\partial\Delta^{\dim\sigma_{a}}$
and $\sigma_{a}$, respectively. According to Lemma \ref{lem:3.1.2.5},
the left hand square of the commutative diagram% https://q.uiver.app/?q=WzAsNixbMCwwLCIoXFxtYXRoY2Fse019Xlxcb3RpbWVzIF97L1xcc2lnbWFfYX1cXHRpbWVzX3tcXERlbHRhXm59IFxcezBcXH0pXFxzdGFyIFxccGFydGlhbFxcRGVsdGEgXntcXGRpbVxcc2lnbWEgX2F9Il0sWzAsMSwiKFxcbWF0aGNhbHtNfV5cXG90aW1lcyBfey9cXHNpZ21hX2F9XFx0aW1lc197XFxEZWx0YV5ufSBcXHswXFx9KVxcc3RhciBcXERlbHRhIF57XFxkaW1cXHNpZ21hIF9hfSJdLFsxLDAsIktfMCJdLFsxLDEsIksiXSxbMiwwLCJcXG1hdGhjYWx7TX1eXFxvdGltZXMgX3s8YX0iXSxbMiwxLCJcXG1hdGhjYWx7TX1eXFxvdGltZXMgX3tcXGxlcSBhfSJdLFswLDJdLFsxLDNdLFsyLDNdLFsyLDRdLFs0LDVdLFszLDVdLFswLDFdXQ==
\[\begin{tikzcd}
	{(\mathcal{M}^\otimes _{/\sigma_a}\times_{\Delta^n} \{0\})\star \partial\Delta ^{\dim\sigma _a}} & {K_0} & {\mathcal{M}^\otimes _{<a}} \\
	{(\mathcal{M}^\otimes _{/\sigma_a}\times_{\Delta^n} \{0\})\star \Delta ^{\dim\sigma _a}} & K & {\mathcal{M}^\otimes _{\leq a}}
	\arrow[from=1-1, to=1-2]
	\arrow[from=2-1, to=2-2]
	\arrow[from=1-2, to=2-2]
	\arrow[from=1-2, to=1-3]
	\arrow[from=1-3, to=2-3]
	\arrow[from=2-2, to=2-3]
	\arrow[from=1-1, to=2-1]
\end{tikzcd}\]is homotopy cocartesian. The right hand square is cocartesian by Proposition
\ref{prop:heads_and_tails}. It follows that the map
\[
\pr{\cal M_{/\sigma_{a}}^{\t}\times_{\Delta^{n}}\{0\}}\star\Delta^{\dim\sigma_{a}}\amalg_{\pr{\cal M_{/\sigma_{a}}^{\t}\times_{\Delta^{n}}\{0\}}\star\partial\Delta^{\dim\sigma_{a}}}\cal M_{<a}^{\t}\to\cal M_{\leq a}^{\t}
\]
is a trivial cofibration in the Joyal model structure. Thus we only
need to extend the composite
\[
g_{0}:\pr{\cal M_{/\sigma_{a}}^{\t}\times_{\Delta^{n}}\{0\}}\star\partial\Delta^{\dim\sigma_{a}}\to\cal M_{<a}^{\t}\xrightarrow{f^{<a}}\cal C^{\t}
\]
to a map $\pr{\cal M_{/\sigma_{a}}^{\t}\times_{\Delta^{n}}\{0\}}\star\Delta^{\dim\sigma_{a}}\to\cal C^{\t}$
over $\cal O^{\t}$.

Assume first that $\sigma_{a}$ is zero-dimensional, so that, in particular,
$m=1$ and $n=1$ (because if $n>1$, then every zero-dimensional
simplex is incomplete). Let $B\in\cal M^{\t}$ be the image of $\sigma_{a}$.
Since $\sigma_{a}$ is closed, the object $B$ lies in $\cal M\times_{\Delta^{1}}\{1\}$.
Using the inert-active factorization system in $\cal M^{\t}$, we
see that the inclusion $\pr{\pr{\cal M_{\act}^{\t}}_{/B}\times_{\Delta^{1}}\{0\}}\subset\cal M_{/B}^{\t}\times_{\Delta^{1}}\{0\}$
is a right adjoint, hence final. Thus, using condition (b), we can
find the desired extension of $g_{0}$. Note that condition (i) is
satisfied with this particular construction.

Assume next that $\sigma_{a}$ has positive dimension. Since $\Delta^{\dim\sigma_{a}}$
has an initial vertex, we see as in the previous paragraph that the
inclusion $\pr{\cal M_{\act}^{\t}}_{/\sigma_{a}}\times_{\Delta^{n}}\{0\}\subset\cal M_{/\sigma_{a}}^{\t}\times_{\Delta^{n}}\{0\}$
is a right adjoint, and hence final. Thus, by virtue of Proposition
\ref{prop:3.1.1.7}, it suffices to show that the map
\[
\pr{\pr{\cal M_{\act}^{\t}}_{/\sigma_{a}}\times_{\Delta^{n}}\{0\}}\star\{0\}\to\cal M_{<a}^{\t}\xrightarrow{f^{<a}}\cal C^{\t}
\]
is an operadic $q$-colimit diagram. Let $B=\sigma_{a}\pr 0$. The
map $\pr{\cal M_{\act}^{\t}}_{/\sigma_{a}}\times_{\Delta^{n}}\{0\}\to\pr{\cal M_{\act}^{\t}}_{/B}\times_{\Delta^{n}}\{0\}$
is a trivial fibration, so it suffices to show that the composite
\[
\phi_{B}:\pr{\pr{\cal M_{\act}^{\t}}_{/B}\times_{\Delta^{n}}\{0\}}\star\{0\}\to\cal M_{<a}^{\t}\xrightarrow{f_{1}}\cal C^{\t}
\]
is an operadic $q$-colimit cone. Let $\inp k\in N\pr{\Fin_{\ast}}$
be the image of the object $B$. If $k=0$, this is clear, because
$\pr{\cal M_{\act}^{\t}}_{/B}\times_{\Delta^{n}}\{0\}$ is a contractible
Kan complex and for each object $\alpha:A\to B$ in $\pr{\cal M_{\act}^{\t}}_{/B}\times_{\Delta^{n}}\{0\}$,
the map $\phi\vert\{\alpha\}\star\{0\}$ is an equivalence in $\cal C^{\t}$
(since $\cal C_{\inp 0}^{\t}$ is a contractible Kan complex). If
$k=1$, the claim follows from (i) or (B). If $k>1$, choose for each
$1\leq i\leq k$ an inert map $B\to B_{i}$ in $\cal M_{\act}^{\t}\times_{\Delta^{n}}\{1\}$
over $\rho^{i}:\inp k\to\inp 1$. Note that since $k>1$, we have
$m\geq2$, so the map $f^{<a}$ is defined on $\cal M_{F\pr 1}^{\t}$.
We will prove the following lemma in Subsection \ref{subsec:contribution1}:
\begin{lem}
\label{lem:contribution1}There is a categorical equivalence
\[
\theta:\prod_{i=1}^{k}\pr{\pr{\cal M_{\act}^{\t}}_{/B_{i}}\times_{\Delta^{n}}\{0\}}\xrightarrow{\simeq}\pr{\cal M_{\act}^{\t}}_{/B}\times_{\Delta^{n}}\{0\}
\]
such that the diagram% https://q.uiver.app/#q=WzAsNCxbMCwwLCIoXFxwcm9kX3tpPTF9XmsoKFxcbWF0aGNhbHtNfV57XFxvdGltZXN9X3tcXG1hdGhybXthY3R9L0JfaX0pXFx0aW1lcyBfe1xcRGVsdGFebn1cXHswXFx9KSleXFx0cmlhbmdsZXJpZ2h0Il0sWzEsMCwiKFxcbWF0aGNhbHtNfV57XFxvdGltZXN9X3tcXG1hdGhybXthY3R9L0J9XFx0aW1lcyBfe1xcRGVsdGFebn1cXHswXFx9KV5cXHRyaWFuZ2xlcmlnaHQiXSxbMSwxLCJcXG1hdGhjYWx7Q31eXFxvdGltZXNfe1xcbWF0aHJte2FjdH19Il0sWzAsMSwiXFxwcm9kX3tpPTF9XmtcXG1hdGhjYWx7Q31ee1xcb3RpbWVzfV97XFxtYXRocm17YWN0fX0iXSxbMCwxLCJcXHRoZXRhXlxcdHJpYW5nbGVyaWdodCJdLFszLDIsIlxcYmlnb3BsdXNfe2k9MX1eayIsMl0sWzAsMSwiXFxzaW1lcSIsMl0sWzEsMiwiXFxwaGlfQiJdLFswLDMsIlxccHJvZF97aT0xfV5rXFxwaGlfe0JfaX0iLDJdXQ==
\[\begin{tikzcd}
	{(\prod_{i=1}^k((\mathcal{M}^{\otimes}_{\mathrm{act}/B_i})\times _{\Delta^n}\{0\}))^\triangleright} & {(\mathcal{M}^{\otimes}_{\mathrm{act}/B}\times _{\Delta^n}\{0\})^\triangleright} \\
	{\prod_{i=1}^k\mathcal{C}^{\otimes}_{\mathrm{act}}} & {\mathcal{C}^\otimes_{\mathrm{act}}}
	\arrow["{\theta^\triangleright}", from=1-1, to=1-2]
	\arrow["\simeq"', from=1-1, to=1-2]
	\arrow["{\prod_{i=1}^k\phi_{B_i}}"', from=1-1, to=2-1]
	\arrow["{\phi_B}", from=1-2, to=2-2]
	\arrow["{\bigoplus_{i=1}^k}"', from=2-1, to=2-2]
\end{tikzcd}\]commutes up to natural equivalence, where for each $1\leq i\leq k$,
the map $\phi_{B_{i}}$ denotes the (codomain restriction of the)
composite
\[
\pr{\cal M_{\act/B_{i}}^{\t}\times_{\Delta^{n}}\{0\}}^{\rcone}\to\cal M_{F\pr 1}^{\t}\xrightarrow{f_{1}}\cal C^{\t}.
\]
\end{lem}

Since each $\phi_{B_{i}}$ is an operadic $q$-colimit diagram by
(i) or (B), Lemma \ref{lem:contribution1} and Proposition \ref{prop:3.1.1.8}
show that the diagram $\phi_{B}$ is an operadic $q$-colimit diagram,
as claimed.

\item We will extend the map $f'_{m}$ in Step 1 to a map $f''_{m}:\cal M_{F''\pr m}^{\t}\to\cal C^{\t}$
over $\cal O^{\t}$. To understand why this extension is feasible,
recall that $F''\pr m$ is obtained from $F'\pr m$ by adjoining the
$\pr{m-1}$-simplices in $G'_{\pr 2}$ without associates (i.e., terminating
at the object $\pr{\inp 0,n}$). So the relevant extension problems
resemble the one we encountered in Corollary \ref{cor:Step2}, and
this corollary is exactly what we will use.

Here is the actual construction of the extension. We argue as in Step
1. Let $\{\sigma_{a}\}_{a\in A}$ be the set of all nondegenerate
simplices of $\cal M^{\t}\times_{\Delta^{n}}\Delta^{\{1,\dots,n\}}$
whose image in $N\pr{\Fin_{\ast}}\times\Delta^{\{1,\dots,n\}}$ is
a degeneration of an $\pr{m-1}$-simplex in $G'_{\pr 2}$ without
associates. Choose a well-ordering of the set $A$ so that $\dim\sigma_{a}$
is a non-decreasing function of $a$. For each $a\in A$, let $\cal M_{<a}^{\t}$
denote the simplicial subset spanned by $\cal M_{F'\pr m}^{\t}$ and
the simplices of $\cal M^{\t}$ whose head factors through $\sigma_{b}$
for some $b<a$. We define $\cal M_{\leq a}^{\t}$ similarly. (The
notations $\cal M_{<a}^{\t}$ and $\cal M_{\leq a}^{\t}$ are in conflict
with the ones introduced in Step 1, but there should be no confusion.)
By Proposition \ref{prop:heads_and_tails}, we have $\cal M_{F''\pr m}^{\t}=\cal M_{F'\pr m}^{\t}\cup\bigcup_{a\in A}\cal M_{\leq a}^{\t}$,
so it suffices to extend $f'_{m}$ to an $A$-sequence $f^{\leq a}:\cal M_{\leq a}^{\t}\to\cal C^{\t}$
over $\cal O^{\t}$. 

The construction is inductive. Suppose $f^{\leq b}$ has been constructed
for $b<a$, and let $f^{<a}:\cal M_{<a}^{\t}\to\cal C^{\t}$ be their
amalgamation. Just as in Step 1, we are reduced to solving a lifting
problem of the form % https://q.uiver.app/?q=WzAsNixbMCwwLCIoXFxtYXRoY2Fse019Xlxcb3RpbWVzIF97L1xcc2lnbWFfYX1cXHRpbWVzX3tcXERlbHRhXm59IFxcezBcXH0pXFxzdGFyIFxccGFydGlhbFxcRGVsdGEgXntcXGRpbVxcc2lnbWEgX2F9Il0sWzAsMSwiKFxcbWF0aGNhbHtNfV5cXG90aW1lcyBfey9cXHNpZ21hX2F9XFx0aW1lc197XFxEZWx0YV5ufSBcXHswXFx9KVxcc3RhciBcXERlbHRhIF57XFxkaW1cXHNpZ21hIF9hfSJdLFsxLDAsIlxcbWF0aGNhbHtNfV5cXG90aW1lcyBfezxhfSJdLFsyLDAsIlxcbWF0aGNhbHtDfV5cXG90aW1lcyJdLFsyLDEsIlxcbWF0aGNhbHtPfV5cXG90aW1lcyJdLFsxLDEsIlxcbWF0aGNhbHtNfV5cXG90aW1lcyAiXSxbMCwxXSxbMCwyXSxbMiwzLCJmXns8YX0iXSxbMSw1XSxbNSw0XSxbMyw0LCJxIl0sWzEsMywiIiwxLHsic3R5bGUiOnsiYm9keSI6eyJuYW1lIjoiZGFzaGVkIn19fV1d
\[\begin{tikzcd}
	{(\mathcal{M}^\otimes _{/\sigma_a}\times_{\Delta^n} \{0\})\star \partial\Delta ^{\dim\sigma _a}} & {\mathcal{M}^\otimes _{<a}} & {\mathcal{C}^\otimes} \\
	{(\mathcal{M}^\otimes _{/\sigma_a}\times_{\Delta^n} \{0\})\star \Delta ^{\dim\sigma _a}} & {\mathcal{M}^\otimes } & {\mathcal{O}^\otimes.}
	\arrow[from=1-1, to=2-1]
	\arrow[from=1-1, to=1-2]
	\arrow["{f^{<a}}", from=1-2, to=1-3]
	\arrow[from=2-1, to=2-2]
	\arrow[from=2-2, to=2-3]
	\arrow["q", from=1-3, to=2-3]
	\arrow[dashed, from=2-1, to=1-3]
\end{tikzcd}\]The existence of such a lift follows from Corollary \ref{cor:Step2}.

\item We complete the proof by extending the map $f''_{m}$ in Step
2 to a map $f_{m}:\cal M_{F\pr m}^{\t}\to\cal C^{\t}$ over $\cal O^{\t}$.
Recall that $F\pr m$ is obtained from $F''\pr m$ by adjoining the
$m$-simplices in $G_{\pr 2}$ and $G_{\pr 3}$. Let $\{\sigma'_{a}\}_{a\in A}$
be the collection of all $\pr{m-1}$-simplices in $G'_{\pr 2}$ and
$G'_{\pr 3}$ having associates. The set of associates of $\{\sigma'_{a}\}_{a\in A}$
is precisely $G_{\pr 2}\cup G_{\pr 3}$, and we base our extension
upon this observation. 

As before, our construction will be inductive. For this, we need a
nice ordering on $A$. The following lemma, which we will prove in
Subsection \ref{subsec:contribution2}, accomplishes this:
\begin{lem}
\label{lem:contribution2}There is a well-ordering on $A$ satisfying
the following condition:
\begin{itemize}
\item [($\blacklozenge$)]Let $a\in A$ and let $\sigma$ be an associate
of $\sigma'_{a}$. Let $0\leq l\leq m$ be the (unique) integer such
that $d_{l}\sigma=\sigma'_{a}$. Then for each $i\in[m]\setminus\{l\}$,
the simplex $d_{i}\sigma$ belongs to $F_{<a}$, which is defined
right after Remark \ref{rem:contribution2}.
\end{itemize}
Moreover, condition ($\blacklozenge$) implies that:
\begin{itemize}
\item [($\blacklozenge\blacklozenge$)]For every $a\in A$, the simplex
$\sigma'_{a}$ does not belong to $F_{<a}$.
\end{itemize}
\end{lem}

\begin{rem}
\label{rem:contribution2}In \cite{HA}, Lurie gives an explicit well-ordering
on $A$ satisfying condition ($\blacklozenge$), but never explains
why the ordering is suited for the purpose of the proof. In fact,
there is no mention of conditions ($\blacklozenge$) nor ($\blacklozenge\blacklozenge$)
in \cite{HA}. As these condition are essential to make the inductive
argument work (and the explicit well-ordering is less relevant to
the proof), we decided to state these conditions explicitly.
\end{rem}

Now choose a well-ordering on $A$ satisfying condition ($\blacklozenge$).
For each $a\in A$, let $F_{\leq a}$ denote the simplicial subset
of $F\pr m$ generated by $F''\pr m$ and the associates of the simplices
$\sigma'_{b}$ for $b\leq a$. Define $F_{<a}$ similarly. We will
construct $f_{m}$ as an amalgamation of an $A$-sequence $\{f_{\leq a}:\cal M_{F_{\leq a}}^{\t}\to\cal C^{\t}\}_{a\in A}$
over $\cal O^{\t}$ which extends $f''_{m}$. The construction is
inductive. Let $a\in A$, and suppose that $f_{\leq b}$ has been
constructed for $b<a$, so that they together determine a map $f_{<a}:\cal M_{F_{<a}}^{\t}\to\cal C^{\t}$.
We must extend $f_{<a}$ to $\cal M_{F_{\leq a}}^{\t}$. We consider
two cases, depending on whether $\sigma'_{a}$ belongs to $G'_{\pr 2}$
or to $G'_{\pr 3}$. 

\begin{enumerate}[label=(Case \arabic*),wide =0.5\parindent=, listparindent=1.5em]

\item Suppose that $\sigma'_{a}$ belongs to $G'_{\pr 2}$. Before
getting down to constructing the extension, let us outline why the
extension is possible. Let $\pr{\inp k,n}\in N\pr{\Fin_{\ast}}\times\Delta^{n}$
denote the final vertex of $\sigma'_{a}$. Note that $k\geq2$ because
$\sigma'_{a}$ is open and has an associate. There are $k$ associates
of $\sigma'_{a}$, determined by the $k$ inert maps $\inp k\to\inp 1$.
Roughly speaking, the extension will be possible because inert maps
of $\cal C^{\t}$ over these maps always exist and enjoy the universal
property of relative limits (by the definition of $\infty$-operads).

Now we get to the actual construction. We shall deploy an argument
which is a variant of Proposition \ref{prop:heads_and_tails}. Let
$\{\tau_{\lambda}\}_{\lambda\in\Lambda}$ be the collection of all
nondegenerate simplices of $\cal M^{\t}$ the image of whose head
in $N\pr{\Fin_{\ast}}\times\Delta^{\{1,\dots,n\}}$ is a degeneration
of $\sigma'_{a}$. Choose a well-ordering of $\Lambda$ such that
$\dim\tau_{\lambda}$ is a non-decreasing function of $\lambda$.
For each $\lambda\in\Lambda$, we let $\cal N_{\leq\lambda}\subset\cal M^{\t}$
denote the simplicial subset generated by $\cal M_{F_{<a}}^{\t}$
and the simplices $\tau:\Delta^{r}\to\cal M^{\t}$ for which there
is an integer $0\leq r'<r$ such that $\tau\vert\Delta^{\{0,\dots,r'\}}$
factors through some $\tau_{\mu}$ for some $\mu\leq\lambda$, $\tau\vert\Delta^{\{r',r'+1\}}$
is inert, and $\tau\vert\Delta^{\{r'+1,\dots,r\}}$ factors through
$\cal M$. We define $\cal N_{<\lambda}$ similarly. We will prove
the following assertion in Subsection \ref{subsec:contribution3}:
\begin{lem}
\label{lem:contribution3}The simplicial set $\cal M_{F_{\leq a}}^{\t}$
is the union of $\cal M_{F_{<a}}^{\t}$ and $\{\cal N_{\leq\lambda}\}_{\lambda\in\Lambda}$. 
\end{lem}

Accepting Lemma \ref{lem:contribution3} for now, we complete the
proof as follows. It will suffice construct a $\Lambda$-sequence
$\{f^{\leq\lambda}:\cal N_{\leq\lambda}\to\cal C^{\t}\}_{\lambda\in\Lambda}$
of maps over $\cal O^{\t}$ which extends $f_{<a}$. The construction
is inductive. Let $\lambda\in\Lambda$, suppose $f^{\leq\mu}$ has
been constructed for $\mu<\lambda$, and let $f^{<\lambda}:\cal N_{<\lambda}\to\cal C^{\t}$
denote the map obtained by amalgamating the maps $\{f^{\leq\mu}\}_{\mu<\lambda}$.
There are $k$ inert maps  $\inp k\to\inp 1$, and these maps and
$p\tau_{\lambda}$ combine to determine a diagram $\Delta^{\dim\tau_{\lambda}}\star\inp k^{\circ}\to N\pr{\Fin_{\ast}}\times\Delta^{n}$.
Set $\cal X=\pr{\Delta^{\dim\tau_{\lambda}}\star\inp k^{\circ}}\times_{N\pr{\Fin_{\ast}}\times\Delta^{n}}\cal M^{\t}$,
and let $\overline{\tau}_{\lambda}:\Delta^{\dim\tau_{\lambda}}\to\cal X$
denote the induced diagram. For each $1\leq i\leq k$, let $\cal X_{i}$
denote the fiber of $\cal X$ over $i\in\inp k^{\circ}$ (which is
isomorphic to $\cal M\times_{\Delta^{n}}\{n\}$), and set $\cal X^{0}=\bigcup_{1\leq i\leq k}\cal X_{i}$
and $\cal X_{\overline{\tau}_{\lambda}/}^{0}=\cal X^{0}\times_{\cal X}\cal X_{\overline{\tau}_{\lambda}/}$.
We now consider the following diagram: % https://q.uiver.app/?q=WzAsNixbMCwwLCJcXHBhcnRpYWxcXERlbHRhXntcXGRpbVxcb3ZlcmxpbmVcXHRhdV9cXGxhbWJkYX1cXHN0YXJcXG1hdGhjYWx7WH1eMF97XFxvdmVybGluZVxcdGF1X1xcbGFtYmRhL30iXSxbMCwxLCJcXERlbHRhXntcXGRpbVxcb3ZlcmxpbmVcXHRhdV9cXGxhbWJkYX1cXHN0YXJcXG1hdGhjYWx7WH1eMF97XFxvdmVybGluZVxcdGF1X1xcbGFtYmRhL30iXSxbMSwwLCJLXzAiXSxbMSwxLCJLIl0sWzIsMCwiXFxtYXRoY2Fse059X3s8XFxsYW1iZGF9Il0sWzIsMSwiXFxtYXRoY2Fse059X3tcXGxlcSBcXGxhbWJkYX0iXSxbMCwxXSxbMCwyXSxbMSwzXSxbMiwzXSxbMiw0XSxbMyw1XSxbNCw1XV0=
\begin{equation}\label{d:3.1.2.3}
\begin{tikzcd}
	{\partial\Delta^{\dim\overline\tau_\lambda}\star\mathcal{X}^0_{\overline\tau_\lambda/}} & {K_0} & {\mathcal{N}_{<\lambda}} \\
	{\Delta^{\dim\overline\tau_\lambda}\star\mathcal{X}^0_{\overline\tau_\lambda/}} & K & {\mathcal{N}_{\leq \lambda}.}
	\arrow[from=1-1, to=2-1]
	\arrow[from=1-1, to=1-2]
	\arrow[from=2-1, to=2-2]
	\arrow[from=1-2, to=2-2]
	\arrow[from=1-2, to=1-3]
	\arrow[from=2-2, to=2-3]
	\arrow[from=1-3, to=2-3]
\end{tikzcd}
\end{equation}Here $K\subset\cal X$ denotes the simplicial subset spanned by the
simplices whose tail factors through $\overline{\tau}_{\lambda}$,
where we regard $\cal X$ as a simplicial set over $\Delta^{1}$ by
the map $\Delta^{\dim\tau_{\lambda}}\star\inp k^{\circ}\to\{0\}\star\{1\}=\Delta^{1}$.
The simplicial set $K_{0}$ is defined similarly by replacing $\overline{\tau}_{\lambda}$
by $\partial\overline{\tau}_{\lambda}$ in the definition of $K$.
The left hand square is homotopy cocartesian by Lemma \ref{lem:3.1.2.5}.
In Subsection \ref{subsec:contribution4}, we will see that:
\begin{lem}
\label{lem:contribution4}The horizontal arrows of the the right hand
square of diagram (\ref{d:3.1.2.3}) are well-defined, and the right
hand square of the diagram (\ref{d:3.1.2.3}) is cocartesian.
\end{lem}

Using Lemma \ref{lem:contribution4}, we are reduced to solving the
lifting problem % https://q.uiver.app/?q=WzAsNixbMCwwLCJcXHBhcnRpYWxcXERlbHRhXntcXGRpbVxcb3ZlcmxpbmVcXHRhdV9cXGxhbWJkYX1cXHN0YXJcXG1hdGhjYWx7WH1eMF97XFxvdmVybGluZVxcdGF1X1xcbGFtYmRhL30iXSxbMCwxLCJcXERlbHRhXntcXGRpbVxcb3ZlcmxpbmVcXHRhdV9cXGxhbWJkYX1cXHN0YXJcXG1hdGhjYWx7WH1eMF97XFxvdmVybGluZVxcdGF1X1xcbGFtYmRhL30iXSxbMSwwLCJcXG1hdGhjYWx7Tn1fezxcXGxhbWJkYX0iXSxbMSwxLCJcXG1hdGhjYWx7Tn1fe1xcbGVxIFxcbGFtYmRhfSJdLFsyLDAsIlxcbWF0aGNhbHtDfV5cXG90aW1lcyAiXSxbMiwxLCJcXG1hdGhjYWx7T31eXFxvdGltZXMgLiJdLFswLDFdLFsyLDRdLFszLDVdLFs0LDVdLFsxLDQsIiIsMSx7InN0eWxlIjp7ImJvZHkiOnsibmFtZSI6ImRhc2hlZCJ9fX1dLFswLDJdLFsxLDNdXQ==
\[\begin{tikzcd}
	{\partial\Delta^{\dim\overline\tau_\lambda}\star\mathcal{X}^0_{\overline\tau_\lambda/}} & {\mathcal{N}_{<\lambda}} & {\mathcal{C}^\otimes } \\
	{\Delta^{\dim\overline\tau_\lambda}\star\mathcal{X}^0_{\overline\tau_\lambda/}} & {\mathcal{N}_{\leq \lambda}} & {\mathcal{O}^\otimes .}
	\arrow[from=1-1, to=2-1]
	\arrow[from=1-2, to=1-3]
	\arrow[from=2-2, to=2-3]
	\arrow[from=1-3, to=2-3]
	\arrow[dashed, from=2-1, to=1-3]
	\arrow[from=1-1, to=1-2]
	\arrow[from=2-1, to=2-2]
\end{tikzcd}\]Now the $\infty$-category $\cal X_{\overline{\tau}_{\lambda}/}^{0}$
is the disjoint union of the $\infty$-categories $\pr{\cal X_{i}}_{\overline{\tau}_{\lambda}/}=\cal X_{i}\times_{\cal X}\cal X_{\overline{\tau}_{\lambda}/}$.
Each $\infty$-category $\pr{\cal X_{i}}_{\overline{\tau}_{\lambda}/}$
has an initial object, given by a cone $\phi_{i}:\pr{\Delta^{\dim\overline{\tau}_{\lambda}}}^{\rcone}\to\cal X$
which maps the last edge to an inert morphism over $\pr{\rho^{i},\id}:\pr{\inp k,n}\to\pr{\inp 1,n}$.
Set $S=\coprod_{i}\{\phi_{i}\}$. The inclusion $S\subset\cal X_{\overline{\tau}_{\lambda}/}^{0}$
is initial, so we are reduced to solving the lifting problem % https://q.uiver.app/?q=WzAsOCxbMSwwLCJcXHBhcnRpYWxcXERlbHRhXntcXGRpbVxcb3ZlcmxpbmVcXHRhdV9cXGxhbWJkYX1cXHN0YXJcXG1hdGhjYWx7WH1eMF97XFxvdmVybGluZVxcdGF1X1xcbGFtYmRhL30iXSxbMSwxLCJcXERlbHRhXntcXGRpbVxcb3ZlcmxpbmVcXHRhdV9cXGxhbWJkYX1cXHN0YXJcXG1hdGhjYWx7WH1eMF97XFxvdmVybGluZVxcdGF1X1xcbGFtYmRhL30iXSxbMiwwLCJcXG1hdGhjYWx7Tn1fezxcXGxhbWJkYX0iXSxbMiwxLCJcXG1hdGhjYWx7Tn1fe1xcbGVxIFxcbGFtYmRhfSJdLFszLDAsIlxcbWF0aGNhbHtDfV5cXG90aW1lcyAiXSxbMywxLCJcXG1hdGhjYWx7T31eXFxvdGltZXMgLiJdLFswLDAsIlxccGFydGlhbFxcRGVsdGFee1xcZGltXFxvdmVybGluZVxcdGF1X1xcbGFtYmRhfVxcc3RhciBTIl0sWzAsMSwiXFxEZWx0YV57XFxkaW1cXG92ZXJsaW5lXFx0YXVfXFxsYW1iZGF9XFxzdGFyIFMiXSxbMiw0XSxbMyw1XSxbNCw1XSxbMCwyXSxbMSwzXSxbNiwwXSxbNywxXSxbNyw0LCIiLDEseyJzdHlsZSI6eyJib2R5Ijp7Im5hbWUiOiJkYXNoZWQifX19XSxbNiw3XV0=
\[\begin{tikzcd}
	{\partial\Delta^{\dim\overline\tau_\lambda}\star S} & {\partial\Delta^{\dim\overline\tau_\lambda}\star\mathcal{X}^0_{\overline\tau_\lambda/}} & {\mathcal{N}_{<\lambda}} & {\mathcal{C}^\otimes } \\
	{\Delta^{\dim\overline\tau_\lambda}\star S} & {\Delta^{\dim\overline\tau_\lambda}\star\mathcal{X}^0_{\overline\tau_\lambda/}} & {\mathcal{N}_{\leq \lambda}} & {\mathcal{O}^\otimes .}
	\arrow[from=1-3, to=1-4]
	\arrow[from=2-3, to=2-4]
	\arrow[from=1-4, to=2-4]
	\arrow[from=1-2, to=1-3]
	\arrow[from=2-2, to=2-3]
	\arrow[from=1-1, to=1-2]
	\arrow[from=2-1, to=2-2]
	\arrow[dashed, from=2-1, to=1-4]
	\arrow[from=1-1, to=2-1]
\end{tikzcd}\]

If the dimension of $\overline{\tau}_{\lambda}$ is positive, then
the lifting problem is trivial since the restriction of the top horizontal
arrow to $\{\dim\overline{\tau}_{\lambda}\}\star S$ is a $q$-limit
cone. If $\overline{\tau}_{\lambda}$ is zero-dimensional (in which
case $m=n=1$), let $C_{i}$ denote the image of $\phi_{i}\in S$
under the top horizontal map. The bottom horizontal arrow classifies
a diagram of inert maps $\{\alpha_{i}:X\to q\pr{C_{i}}\}_{1\leq i\leq k}$
lying over $\{\rho^{i}:\inp k\to\inp 1\}_{1\leq i\leq k}$, and we
wish to lift this to a diagram $S^{\lcone}\to\cal C^{\t}$ in $\cal C^{\t}$
which maps each $\phi_{i}\in S$ to the object $C_{i}$. Since $q$
is a fibration of $\infty$-operads, we can in fact find such a lift
consisting of inert morphisms. Note that with such a choice of lift,
condition (ii) is satisfied.

\item Suppose that $\sigma'_{a}$ belongs to $G'_{\pr 3}$. We will
show that the inclusion $\cal M_{<a}^{\t}\hookrightarrow\cal M_{\leq a}^{\t}$
is a weak categorical equivalence. The desired extension of $f_{<a}$
can then be found because the map $q$ is a categorical fibration.

Let $\sigma$ be the unique associate of $\sigma'_{a}$, which we
depict as 
\[
\pr{\inp{k_{0}},e_{0}}\xrightarrow{\alpha_{\sigma}\pr 1}\cdots\xrightarrow{\alpha_{\sigma}\pr m}\pr{\inp{k_{m}},e_{m}}.
\]
Let $0<j<k\leq m$ be the integers such that $\alpha_{\sigma}\pr i$
is strongly inert for $i=j$, active for $j<i\leq k$, and strongly
inert for $i>k$. Set $Y=\pr{N\pr{\Fin_{\ast}}\times\Delta^{n}}_{/\sigma}\times_{\Delta^{n}}\{0\}$.
Using ($\blacklozenge$), we may consider the following commutative
diagram: % https://q.uiver.app/?q=WzAsNCxbMCwwLCJZXFxzdGFyIFxcTGFtYmRhIF5tX2oiXSxbMCwxLCJZXFxzdGFyIFxcRGVsdGEgXm0iXSxbMSwwLCJcXG92ZXJsaW5le0ZfezxhfX0iXSxbMSwxLCJcXG92ZXJsaW5le0Zfe1xcbGVxIGF9fSJdLFswLDFdLFswLDJdLFsxLDNdLFsyLDNdXQ==
\[\begin{tikzcd}
	{Y\star \Lambda ^m_j} & {\overline{F_{<a}}} \\
	{Y\star \Delta ^m} & {\overline{F_{\leq a}},}
	\arrow[from=1-1, to=2-1]
	\arrow[from=1-1, to=1-2]
	\arrow[from=2-1, to=2-2]
	\arrow[from=1-2, to=2-2]
\end{tikzcd}\]where $\overline{F_{\leq a}}$ denotes the simplicial subset of $N\pr{\Fin_{\ast}}\times\Delta^{n}$
spanned by those simplices whose head belongs to $F_{\leq a}$, and
$\overline{F_{<a}}$ is defined similarly. Using ($\blacklozenge\blacklozenge$),
we deduce that this square is cocartesian. Therefore, it suffices
to show that the inclusion 
\[
\pr{Y\star\Lambda_{j}^{m}}\times_{N\pr{\Fin_{\ast}}\times\Delta^{n}}\cal M^{\t}\to\pr{Y\star\Delta^{m}}\times_{N\pr{\Fin_{\ast}}\times\Delta^{n}}\cal M^{\t}
\]
is a weak categorical equivalence. We will prove more generally that
for any morphism $Y'\to Y$ of simplicial sets, the map 
\[
\eta_{Y'}:\pr{Y'\star\Lambda_{j}^{m}}\times_{N\pr{\Fin_{\ast}}\times\Delta^{n}}\cal M^{\t}\to\pr{Y'\star\Delta^{m}}\times_{N\pr{\Fin_{\ast}}\times\Delta^{n}}\cal M^{\t}
\]
is a weak categorical equivalence. 

The assignment $Y'\mapsto\eta_{Y'}$ defines a functor from $\SS_{/Y}$
to the arrow category $\SS^{[1]}$ which commutes with filtered colimits.
Since weak categorical equivalences are stable under filtered colimits,
we may assume that $Y'$ is a finite simplicial set. If $Y'$ is empty,
the claim follows from Lemma \ref{lem:2.4.4.6}. If $Y'$ is nonempty,
we can find (by induction on the dimension of $Y'$ and the number
of nondegenerate simplices of $Y'$) a pushout diagram % https://q.uiver.app/?q=WzAsNCxbMCwwLCJcXHBhcnRpYWxcXERlbHRhXnAiXSxbMSwwLCJYIl0sWzEsMSwiWSciXSxbMCwxLCJcXERlbHRhXnAiXSxbMCwxXSxbMSwyXSxbMCwzXSxbMywyXV0=
\[\begin{tikzcd}
	{\partial\Delta^p} & X \\
	{\Delta^p} & {Y'}
	\arrow[from=1-1, to=1-2]
	\arrow[from=1-2, to=2-2]
	\arrow[from=1-1, to=2-1]
	\arrow[from=2-1, to=2-2]
\end{tikzcd}\]in $\SS_{/Y}$, such that $\eta_{X}$ is a weak categorical equivalence.
The map $\eta_{Y'}$ factors as
\begin{align*}
\pr{Y'\star\Lambda_{j}^{m}}\times_{N\pr{\Fin_{\ast}}\times\Delta^{n}}\cal M^{\t} & \xrightarrow{\phi}\pr{Y'\star\Lambda_{j}^{m}\cup X\star\Delta^{m}}\times_{N\pr{\Fin_{\ast}}\times\Delta^{n}}\cal M^{\t}\\
 & \xrightarrow{\psi}\pr{Y'\star\Delta^{m}}\times_{N\pr{\Fin_{\ast}}\times\Delta^{n}}\cal M^{\t}.
\end{align*}
The map $\phi$ is a pushout of $\eta_{X}$, and hence is a trivial
cofibration. The map $\psi$ is a pushout of the inclusion
\[
\psi':\pr{\Delta^{p}\star\Lambda_{j}^{m}\cup\partial\Delta^{p}\star\Delta^{m}}\times_{N\pr{\Fin_{\ast}}\times\Delta^{n}}\cal M^{\t}\to\pr{\Delta^{p}\star\Delta^{m}}\times_{N\pr{\Fin_{\ast}}\times\Delta^{n}}\cal M^{\t}.
\]
Using the isomorphism of simplicial sets $\Delta^{p}\star\Lambda_{j}^{m}\cup\partial\Delta^{p}\star\Delta^{m}\cong\Lambda_{p+j+1}^{m+p+1}$
and Lemma \ref{lem:2.4.4.6}, we deduce that $\psi'$ is a weak categorical
equivalence. Hence $\psi$ is a weak categorical equivalence, completing
the treatment of Case 2.

\end{enumerate} 

\end{enumerate}
\end{proof}

\section{\label{sec:leftover}Leftover Proofs}

We now give proofs to the lemmas that appeared in Subsection \ref{subsec:Lurie's_proof}. 

\subsection{\label{subsec:contribution1}Proof of Lemma \ref{lem:contribution1}}

Using Proposition \ref{prop:oplus_p-limit}, choose a direct sum functor
$\bigoplus_{i=1}^{k}:\pr{\cal M_{\act}^{\t}}^{k}\times_{\pr{\Delta^{n}}^{k}}\Delta^{n}\to\cal M^{\t}$
so that, for each $1\leq i\leq k$, there is an inert natural transformation
$\widetilde{h}_{i}:\bigoplus_{i=1}^{k}\to\opn{pr}_{i}$ making the
diagram % https://q.uiver.app/#q=WzAsNCxbMCwwLCIoKFxcbWF0aGNhbHtNfV5cXG90aW1lc197XFxtYXRocm17YWN0fX0pXmtcXHRpbWVzIF97KFxcRGVsdGFebilea31cXERlbHRhXm4pXFx0aW1lcyBcXERlbHRhIF4xIl0sWzEsMCwiXFxtYXRoY2Fse019Xlxcb3RpbWVzICJdLFsxLDEsIlxcRGVsdGFeblxcdGltZXMgTihcXG1hdGhzZntGaW59X1xcYXN0KSJdLFswLDEsIihcXERlbHRhXm5cXHRpbWVzIE4oXFxtYXRoc2Z7RmlufV9cXGFzdCApX3tcXG1hdGhybXthY3R9fSleaylcXHRpbWVzIFxcRGVsdGFeMSJdLFswLDEsIlxcd2lkZXRpbGRle2h9X2kiXSxbMSwyLCJwIl0sWzMsMiwiXFxvcGVyYXRvcm5hbWV7aWR9X3tcXERlbHRhXm59XFx0aW1lcyBoX2kiLDJdLFswLDNdXQ==
\[\begin{tikzcd}
	{((\mathcal{M}^\otimes_{\mathrm{act}})^k\times _{(\Delta^n)^k}\Delta^n)\times \Delta ^1} & {\mathcal{M}^\otimes } \\
	{(\Delta^n\times N(\mathsf{Fin}_\ast )_{\mathrm{act}})^k)\times \Delta^1} & {\Delta^n\times N(\mathsf{Fin}_\ast)}
	\arrow["{\widetilde{h}_i}", from=1-1, to=1-2]
	\arrow[from=1-1, to=2-1]
	\arrow["p", from=1-2, to=2-2]
	\arrow["{\operatorname{id}_{\Delta^n}\times h_i}"', from=2-1, to=2-2]
\end{tikzcd}\]commutative. The inert maps $\{B\to B_{i}\}_{1\leq i\leq k}$ determine
an equivalence $B\xrightarrow{\simeq}\bigoplus_{i=1}^{k}B_{i}$ that
lies over the identity morphism of $\pr{n,\inp k}$. Extend this equivalence
to a natural equivalence $\alpha:F\xrightarrow{\simeq}\bigoplus_{i=1}^{k}$
of functors $\pr{\cal M^{\t}}^{k}\times_{\pr{\Delta^{n}}^{k}}\Delta^{n}\to\cal M^{\t}$
over $\Delta^{n}\times N\pr{\Fin_{\ast}}$. For each $1\leq i\leq k$,
there is a commutative diagram% https://q.uiver.app/#q=WzAsNixbMCwwLCIoKFxcbWF0aGNhbHtNfV5cXG90aW1lc197XFxtYXRocm17YWN0fX0pXmtcXHRpbWVzIF97KFxcRGVsdGFebilea31cXERlbHRhXm4pXFx0aW1lcyBcXExhbWJkYSBeMiBfMSJdLFszLDAsIlxcbWF0aGNhbHtNfV5cXG90aW1lcyAiXSxbMywxLCJcXERlbHRhXm5cXHRpbWVzIE4oXFxtYXRoc2Z7RmlufV9cXGFzdCkiXSxbMiwxLCIoXFxEZWx0YV5uXFx0aW1lcyBOKFxcbWF0aHNme0Zpbn1fXFxhc3QgKV97XFxtYXRocm17YWN0fX0pXmspXFx0aW1lcyBcXERlbHRhXjEiXSxbMCwxLCIoKFxcbWF0aGNhbHtNfV5cXG90aW1lc197XFxtYXRocm17YWN0fX0pXmtcXHRpbWVzIF97KFxcRGVsdGFebilea31cXERlbHRhXm4pXFx0aW1lcyBcXERlbHRhXjIiXSxbMSwxLCIoKFxcbWF0aGNhbHtNfV5cXG90aW1lc197XFxtYXRocm17YWN0fX0pXmtcXHRpbWVzIF97KFxcRGVsdGFebilea31cXERlbHRhXm4pXFx0aW1lcyBcXERlbHRhXjEiXSxbMCwxLCIoXFxhbHBoYSxcXHdpZGV0aWxkZXtofV9pKSJdLFsxLDIsInAiXSxbMywyLCJcXG9wZXJhdG9ybmFtZXtpZH1fe1xcRGVsdGFebn1cXHRpbWVzIGhfaSIsMl0sWzAsNF0sWzQsNSwiXFxvcGVyYXRvcm5hbWV7aWR9XFx0aW1lcyBzXzEiLDJdLFs1LDNdLFs0LDEsIiIsMSx7InN0eWxlIjp7ImJvZHkiOnsibmFtZSI6ImRhc2hlZCJ9fX1dXQ==
\[\begin{tikzcd}[column sep = small, scale cd=.8]
	{((\mathcal{M}^\otimes_{\mathrm{act}})^k\times _{(\Delta^n)^k}\Delta^n)\times \Lambda ^2 _1} &&& {\mathcal{M}^\otimes } \\
	{((\mathcal{M}^\otimes_{\mathrm{act}})^k\times _{(\Delta^n)^k}\Delta^n)\times \Delta^2} & {((\mathcal{M}^\otimes_{\mathrm{act}})^k\times _{(\Delta^n)^k}\Delta^n)\times \Delta^1} & {(\Delta^n\times N(\mathsf{Fin}_\ast )_{\mathrm{act}})^k)\times \Delta^1} & {\Delta^n\times N(\mathsf{Fin}_\ast)}
	\arrow["{(\alpha,\widetilde{h}_i)}", from=1-1, to=1-4]
	\arrow[from=1-1, to=2-1]
	\arrow["p", from=1-4, to=2-4]
	\arrow[dashed, from=2-1, to=1-4]
	\arrow["{\operatorname{id}\times s_1}"', from=2-1, to=2-2]
	\arrow[from=2-2, to=2-3]
	\arrow["{\operatorname{id}_{\Delta^n}\times h_i}"', from=2-3, to=2-4]
\end{tikzcd}\]which admits a dashed filler because the left vertical arrow is a
weak categorical equivalence. Thus, by replacing $\bigoplus_{i=1}^{k}$
with $F$ and $\widetilde{h}_{i}$ with the restriction of the filler
to $\pr{\cal M_{\act}^{\t}}^{k}\times_{\pr{\Delta^{n}}^{k}}\Delta^{n}\times\Delta^{\{1,2\}}$,
we may assume that $B=\bigoplus_{i=1}^{k}B_{i}$. Proposition \ref{prop:directsum_slice_equiv}
then gives us a categorical equivalence
\[
\theta:\pr{\pr{\cal M_{\act}^{\t}}^{k}\times_{\pr{\Delta^{n}}^{k}}\Delta^{n}}_{/\pr{B_{1},\dots,B_{k}}}\times_{\Delta^{n}}\{0\}\xrightarrow{\simeq}\pr{\cal M_{\act}^{\t}}_{/B}\times_{\Delta^{n}}\{0\}.
\]
We claim that $\theta$ has the desired property.

Consider the commutative diagram % https://q.uiver.app/#q=WzAsNCxbMCwwLCIoKChcXG1hdGhjYWx7TX1eXFxvdGltZXMgX3tcXG1hdGhybXthY3R9fSlea1xcdGltZXMgX3soXFxEZWx0YV5uKV5rfVxcRGVsdGFebilfey8oQl8xLFxcZG90cyAsQl9rKX1cXHRpbWVzIF97XFxEZWx0YV5ufVxcezBcXH0pXlxcdHJpYW5nbGVyaWdodCJdLFsxLDAsIigoXFxtYXRoY2Fse019Xlxcb3RpbWVzIF97XFxtYXRocm17YWN0fX0pX3svXFxiaWdvcGx1c197aT0xfV5rQl9pfVxcdGltZXMgX3tcXERlbHRhXm59XFx7MFxcfSleXFx0cmlhbmdsZXJpZ2h0Il0sWzEsMSwiXFxtYXRoY2Fse019Xlxcb3RpbWVzIC4iXSxbMCwxLCIoXFxtYXRoY2Fse019Xlxcb3RpbWVzIF97XFxtYXRocm17YWN0fX0pXmtcXHRpbWVzIF97KFxcRGVsdGFebilea31cXERlbHRhXm4iXSxbMCwxLCJcXHRoZXRhXlxcdHJpYW5nbGVyaWdodCJdLFsxLDJdLFszLDIsIlxcYmlnb3BsdXNfe2k9MX1eayIsMl0sWzAsM10sWzAsMSwiXFxzaW1lcSIsMl1d
\[\begin{tikzcd}
	{(((\mathcal{M}^\otimes _{\mathrm{act}})^k\times _{(\Delta^n)^k}\Delta^n)_{/(B_1,\dots ,B_k)}\times _{\Delta^n}\{0\})^\triangleright} & {((\mathcal{M}^\otimes _{\mathrm{act}})_{/\bigoplus_{i=1}^kB_i}\times _{\Delta^n}\{0\})^\triangleright} \\
	{(\mathcal{M}^\otimes _{\mathrm{act}})^k\times _{(\Delta^n)^k}\Delta^n} & {\mathcal{M}^\otimes .}
	\arrow["{\theta^\triangleright}", from=1-1, to=1-2]
	\arrow["\simeq"', from=1-1, to=1-2]
	\arrow[from=1-1, to=2-1]
	\arrow[from=1-2, to=2-2]
	\arrow["{\bigoplus_{i=1}^k}"', from=2-1, to=2-2]
\end{tikzcd}\]In light of the commutativity of this diagram, the composite
\begin{align*}
\eta:\pr{\prod_{1\leq i\leq k}\pr{\pr{\cal M_{\act}^{\t}}_{/B_{i}}\times_{\Delta^{n}}\{0\}}}^{\rcone} & \cong\pr{\pr{\pr{\cal M_{\act}^{\t}}^{k}\times_{\pr{\Delta^{n}}^{p}}\Delta^{n}}_{/\pr{B_{1},\dots,B_{k}}}\times_{\Delta^{n}}\{0\}}^{\rcone}\\
 & \to\pr{\cal M_{\act}^{\t}}^{k}\times_{\pr{\Delta^{n}}^{k}}\Delta^{n}\\
 & \xrightarrow{\bigoplus_{i=1}^{k}}\cal M^{\t}
\end{align*}
takes values in $\cal M_{F\pr 1}^{\t}$. The inert natural transformation
$\widetilde{h}_{i}$ induces an inert natural transformation from
$\eta$ to the composite 
\[
\eta_{i}:\pr{\prod_{1\leq i\leq k}\pr{\pr{\cal M_{\act}^{\t}}_{/B_{i}}\times_{\Delta^{n}}\{0\}}}^{\rcone}\xrightarrow{{\rm pr}_{i}^{\rcone}}\pr{\pr{\cal M_{\act}^{\t}}_{/B_{i}}\times_{\Delta^{n}}\{0\}}^{\rcone}\to\cal M^{\t}.
\]
Since $F\pr 1$ contains the $1$-simplices in $G_{\pr 2}$, this
natural transformation takes values in $\cal M_{F\pr 1}^{\t}$. Since
$f_{1}$ satisfies (ii), we deduce that the composite $\phi_{B}\theta^{\rcone}=f_{1}\eta$
admits an inert natural transformation $H_{i}$ to the composite $\phi_{B_{i}}{\rm pr}_{i}^{\rcone}=f_{1}\eta_{i}$,
such that for each vertex $v$ in $\pr{\prod_{1\leq i\leq k}\pr{\pr{\cal M_{\act}^{\t}}_{/B_{i}}\times_{\Delta^{n}}\{0\}}}^{\rcone}$,
the maps $\{H_{i}\pr v\}_{1\leq i\leq k}$ form a $q$-limit cone.
It follows from Corollary \ref{cor:oplus_p-limit} that $f_{1}\eta$
is naturally equivalent to $\bigoplus_{i=1}^{k}\circ\pr{\phi_{B_{i}}}_{i=1}^{k}$,
as claimed.

\subsection{\label{subsec:contribution2}Proof of Lemma \ref{lem:contribution2}}

First, we explain why condition ($\blacklozenge\blacklozenge$) follows
from ($\blacklozenge$). Let $a\in A$, and suppose that $\sigma'_{a}$
belongs to $F_{<a}$. Choose a minimal element $b\in A$ such that
$\sigma'_{a}$ belongs to $F_{<b}$. (In particular, $b\leq a$.)
Then $\sigma'_{a}$ factors through one of the following simplices:

\begin{enumerate}[label=(\arabic*)]

\item Incomplete simplices.

\item Nondegenerate simplices of dimensions less than $m-1$.

\item $\pr{m-1}$-simplices in $G_{\pr 1}\cup G_{\pr 2}\cup G_{\pr 3}$.

\item $\pr{m-1}$-simplices in $G'_{\pr 2}$ without associates.

\item Associates of $\sigma'_{c}$ for some $c<b$.

\end{enumerate}

Since $\sigma'_{a}$ is complete and nondegenerate and has dimension
$m-1$, the cases (1), (2), (3), and (4) are immediately ruled out.
We show that the case (5) is impossible by reasoning by contradiction.
Suppose that there are an index $c<b$ and an associate $\sigma$
of $\sigma'_{c}$ through which $\sigma'_{a}$ factors. We then have
$\sigma'_{a}=d_{i}\sigma$ for some $0\leq i\leq m$ for dimensional
reasons. We also have $\sigma'_{c}\neq\sigma'_{a}$ because $c<b\leq a$.
It follows from ($\blacklozenge$) that $\sigma'_{a}$ belongs to
$F_{<c}$, contrary to the minimality of $b$.

Next, we construct a well-ordering on $A$ satisfying condition ($\blacklozenge$).
The construction is due to Lurie. For each $a\in A$, define integers
$u_{\mathrm{neut}}\pr a,u_{\act}\pr a,u_{\mathrm{oc}}\pr a,u_{\mathrm{as}}\pr a$
as follows: Choose an associate $\sigma$ of $\sigma'_{a}$, and set
$\alpha_{\sigma}\pr i=\sigma\vert\Delta^{\{i-1,i\}}$ for $1\leq i\leq m$.
Then:
\begin{itemize}
\item $u_{\mathrm{neut}}\pr a$ is the number of integers $1\leq i\leq m$
such that $\alpha_{\sigma}\pr i$ is neutral.
\item $u_{\act}\pr a$ is the number of integers $1\leq i\leq m$ such that
$\alpha_{\sigma}\pr i$ is active.
\item $u_{\mathrm{oc}}\pr a$ is set equal to $0$ if $\sigma$ is closed,
and is set equal to $1$ if $\sigma$ is open.
\item $u_{\mathrm{as}}\pr a$ is the number of pairs of integers $1\leq i<j\leq m$
such that $\alpha_{\sigma}\pr i$ is active and $\alpha_{\sigma}\pr j$
is strictly inert.
\end{itemize}
We choose a well-ordering on $A$ so that the function 
\[
A\ni a\mapsto\pr{u_{\mathrm{neut}}\pr a,u_{\act}\pr a,u_{\mathrm{oc}}\pr a,u_{\mathrm{as}}\pr a}\in\bb Z^{4}
\]
is non-decreasing, where $\bb Z^{4}$ is equipped with the lexicographic
ordering. (Thus $\pr{n_{1},n_{2},n_{3},n_{4}}<\pr{n'_{1},n'_{2},n'_{3},n'_{4}}$
if and only if $n_{i}<n'_{i}$, where $i$ is the minimal integer
for which $n_{i}\neq n'_{i}$.) We will show that this ordering does
the job.

Let $a,\sigma,i$ be as in ($\blacklozenge$). We wish to show that
$d_{i}\sigma$ belongs to $F_{<a}$. If $d_{i}\sigma$ is incomplete
or degenerate, then it belongs to $F\pr{m-1}$ and we are done. So
assume that $d_{i}\sigma$ is complete and nondegenerate. Then $d_{i}\sigma$
belongs to (exactly) one of the sets $G_{\pr 1},G_{\pr 2},G'_{\pr 2},G_{\pr 3},G'_{\pr 3}$.
If $d_{i}\sigma$ belongs to $G_{\pr 1}\cup G_{\pr 2}\cup G_{\pr 3}$,
or if it belongs to $G'_{\pr 2}$ and has no associates, then it belongs
to $F''\pr m$ and we are done. So we will assume that $d_{i}\sigma$
belongs to $G'_{\pr 2}\cup G'_{\pr 3}$ and has an associate, so that
$d_{i}\sigma=\sigma'_{b}$ for some $b\in A$. We wish to show that
$b<a$.

We will make use of the following notations: For each $1\leq i\leq m$,
we set $\alpha_{\sigma}\pr i=\sigma\vert\Delta^{\{i-1,i\}}$. We let
$0\leq k\leq m$ denote the minimal integer such that $\alpha_{\sigma}\pr i$
is strongly inert for every $i>k$, and let $0\leq j\leq k$ denote
the minimal integer such that $\alpha_{\sigma}\pr i$ is active for
every $j<i\leq k$. 

Suppose first that $\sigma'_{a}\in G'_{\pr 2}$. Our assumption on
$d_{i}\sigma$ implies that $i=k$ and that the composite $\alpha_{\sigma}\pr{k+1}\circ\alpha_{\sigma}\pr k$
is neutral. It follows that the associate $\tau$ of $d_{i}\sigma$
satisfies $\alpha_{\tau}\pr s=\alpha_{\sigma}\pr s$ for $s\neq k,k+1$,
$\alpha_{\tau}\pr k$ is strictly inert, and $\alpha_{\tau}\pr{k+1}$
is active. Thus $u_{\mathrm{neut}}\pr a=u_{\mathrm{neut}}\pr b$,
$u_{\act}\pr a=u_{\act}\pr b$, $u_{\mathrm{oc}}\pr a=u_{\mathrm{oc}}\pr b$,
and $u_{\mathrm{as}}\pr a>u_{\mathrm{as}}\pr b$. Hence $a>b$, as
required.

Suppose next that $\sigma'_{a}\in G'_{\pr 3}$ and that $d_{i}\sigma\in G'_{\pr 2}$. 
\begin{itemize}
\item If $u_{\mathrm{neut}}\pr a>0$, we are done, since $u_{\mathrm{neut}}\pr b=0$.
\item If $u_{\mathrm{neut}}\pr a=0$, then each $\alpha_{\sigma}\pr i$
is either active or strongly inert. It follows that $u_{\act}\pr a$
is not less than the number of active morphisms in $d_{i}\sigma$,
which is equal to $u_{\act}\pr b$. So $u_{\act}\pr a\geq u_{\act}\pr b$.
If $u_{\act}\pr a>u_{\act}\pr b$, we are done. 
\item If $u_{\mathrm{neut}}\pr a=0$ and $u_{\act}\pr a=u_{\act}\pr b$,
then $\sigma$ is open. Indeed, if $\sigma$ were closed, then $i=m$
since $d_{i}\sigma$ is open, and $j<m$ since $u_{\act}\pr a=u_{\act}\pr b$.
But then $d_{i}\sigma$ must contain a subsequence consisting of a
strictly inert morphism followed by an active morphism, which is impossible
because $d_{i}\sigma$ belongs to $G'_{\pr 2}$. Hence $u_{\mathrm{oc}}\pr a>u_{\mathrm{oc}}\pr b$
and we are done.
\end{itemize}

Finally, suppose that $\sigma'_{a}\in G'_{\pr 3}$ and that $d_{i}\sigma\in G'_{\pr 3}$.
Note that our assumption on $d_{i}\sigma$ forces $i=j-1$ or $i=k+1$
and $0<i<m$.
\begin{itemize}
\item The number of neutral morphisms in $d_{i}\sigma$ is at most $u_{\mathrm{neut}}\pr a+1$,
so 
\begin{equation}
u_{\mathrm{neut}}\pr a\geq\#\{\text{neutral morphisms in }d_{i}\sigma\}-1=u_{\mathrm{neut}}\pr b.\label{eq:1}
\end{equation}
If $u_{\mathrm{neut}}\pr a>u_{\mathrm{neut}}\pr b$, we are done. 
\item Suppose $u_{\mathrm{neut}}\pr a=u_{\mathrm{neut}}\pr b$, so that
the equality holds in (\ref{eq:1}). Then the composite $\alpha_{\sigma}\pr{i+1}\circ\alpha_{\sigma}\pr i$
is neutral, $\alpha_{\sigma}\pr i$ is active, and $\alpha_{\sigma}\pr{i+1}$
is strictly inert. It follows that the associate $\tau$ of $d_{i}\sigma$
satisfies $\alpha_{\tau}\pr s=\alpha_{\sigma}\pr s$ for $s\neq i,i+1$,
$\alpha_{\tau}\pr i$ is strictly inert, and $\alpha_{\tau}\pr{i+1}$
is active. Thus $u_{\mathrm{neut}}\pr a=u_{\mathrm{neut}}\pr b$,
$u_{\act}\pr a=u_{\act}\pr b$, $u_{\mathrm{oc}}\pr a=u_{\mathrm{oc}}\pr b$,
and $u_{\mathrm{as}}\pr a>u_{\mathrm{as}}\pr b$. Hence $a>b$, as
desired.
\end{itemize}
This completes the proof that the ordering on $A$ satisfies condition
($\blacklozenge$).

\subsection{\label{subsec:contribution3}Proof of Lemma \ref{lem:contribution3}}

It is clear that $\cal N_{\leq\lambda}$ and $\cal M_{F_{<a}}^{\t}$
are contained in $\cal M_{F_{\leq a}}^{\t}$. For the reverse containment,
let $x$ be an arbitrary simplex of $\cal M_{F_{\leq a}}^{\t}$. We
must show that $x$ belongs to either $\cal M_{F_{<a}}^{\t}$ or one
of the $\cal N_{\leq\lambda}$'s. If $x$ belongs to $\cal M_{F_{<a}}^{\t}$,
we are done. So assume not. Let $r$ denote the dimension of $x$.
Since $x$ does not belong to $\cal M_{F_{<a}}^{\t}$, its head is
nonempty. Find an integer $0\leq r'\leq r$ such that $x\vert\Delta^{\{r',\dots,r\}}$
is the head of $x$. Since $x$ does not belong to $\cal M_{F_{<a}}^{\t}$,
the simplex $px\vert\Delta^{\{r',\dots,r\}}$ factors through an associate
$\sigma$ of $\sigma'_{a}$. Let $\Delta^{\{r',\dots,r\}}\xrightarrow{u}\Delta^{m}\xrightarrow{\sigma}N\pr{\Fin_{\ast}}\times\Delta^{\{1,\dots,n\}}$
be such a factorization. If the image of $u$ does not contain some
integer $i\in\{0,\dots,m-1\}$, then $px\vert\Delta^{\{r',\dots,r\}}$
factors through $d_{i}\sigma$ and hence through $F_{<a}$ by ($\blacklozenge$),
a contradiction. So the image of $u$ contains every integer in $\{0,\dots,m-1\}$.
Let $r'\leq r''\leq r$ be the largest integer such that $u\pr{r''}<m$.
There are now several cases to consider:
\begin{itemize}
\item Suppose that $u$ is surjective on vertices and that the map $x\pr{r''}\to x\pr{r''+1}$
is inert. We write $x\vert\Delta^{\{0,\dots,r''\}}=s^{*}y$, where
$s:[r'']\to[k]$ is a surjective poset map and $y$ is nondegenerate.
We claim that $y$ is one of the $\tau_{\lambda}$'s, so that $x$
belongs to $\cal N_{\leq\lambda}$. Let $k'=s\pr{r'}$. Then $y\vert\Delta^{\{k',\dots,k\}}$
is the head of $y$. We write $py\vert\Delta^{\{k',\dots,k\}}=s^{\p\ast}z$,
where $z$ is nondegenerate and $s'$ is a surjection. Then we obtain
the following commutative diagram: % https://q.uiver.app/?q=WzAsOCxbMSwwLCJcXERlbHRhXntcXHtyJyxcXGRvdHMscicnXFx9fSJdLFsyLDAsIlxcRGVsdGFee20tMX0iXSxbMSwxLCJcXG1hdGhjYWx7TX1eXFxvdGltZXMgIl0sWzIsMSwiTihcXG1hdGhzZntGaW59X1xcYXN0KVxcdGltZXMgXFxEZWx0YV5uLiJdLFswLDAsIlxcRGVsdGFee1xcezAsXFxkb3RzLHInJ1xcfX0iXSxbMSwyLCJcXERlbHRhXmwiXSxbMCwyLCJcXERlbHRhXntcXHtrJyxcXGRvdHNtLGtcXH19Il0sWzAsMSwiXFxEZWx0YV57a30iXSxbMCwxLCJ1XFx2ZXJ0XFxEZWx0YV57XFx7cicsXFxkb3RzLHInJ1xcfX0iLDAseyJzdHlsZSI6eyJoZWFkIjp7Im5hbWUiOiJlcGkifX19XSxbMSwzLCJcXHNpZ21hX2EnIl0sWzIsMywicCIsMl0sWzQsNywicyIsMix7InN0eWxlIjp7ImhlYWQiOnsibmFtZSI6ImVwaSJ9fX1dLFs3LDIsInkiLDJdLFswLDQsIiIsMCx7InN0eWxlIjp7InRhaWwiOnsibmFtZSI6Imhvb2siLCJzaWRlIjoiYm90dG9tIn19fV0sWzYsNywiIiwyLHsic3R5bGUiOnsidGFpbCI6eyJuYW1lIjoiaG9vayIsInNpZGUiOiJib3R0b20ifX19XSxbNiw1LCJzJyIsMix7InN0eWxlIjp7ImhlYWQiOnsibmFtZSI6ImVwaSJ9fX1dLFs2LDJdLFs1LDMsInoiLDJdLFs0LDIsInhcXHZlcnRcXERlbHRhXntcXHswLFxcZG90cyxyJydcXH19IiwxXV0=
\[\begin{tikzcd}
	{\Delta^{\{0,\dots,r''\}}} & {\Delta^{\{r',\dots,r''\}}} & {\Delta^{m-1}} \\
	{\Delta^{k}} & {\mathcal{M}^\otimes } & {N(\mathsf{Fin}_\ast)\times \Delta^n.} \\
	{\Delta^{\{k',\dotsm,k\}}} & {\Delta^l}
	\arrow["{u\vert\Delta^{\{r',\dots,r''\}}}", two heads, from=1-2, to=1-3]
	\arrow["{\sigma_a'}", from=1-3, to=2-3]
	\arrow["p"', from=2-2, to=2-3]
	\arrow["s"', two heads, from=1-1, to=2-1]
	\arrow["y"', from=2-1, to=2-2]
	\arrow[hook', from=1-2, to=1-1]
	\arrow[hook', from=3-1, to=2-1]
	\arrow["{s'}"', two heads, from=3-1, to=3-2]
	\arrow[from=3-1, to=2-2]
	\arrow["z"', from=3-2, to=2-3]
	\arrow["{x\vert\Delta^{\{0,\dots,r''\}}}"{description}, from=1-1, to=2-2]
\end{tikzcd}\]The diagram shows that $px\vert\Delta^{\{r',\dots,r''\}}$ is a degeneration
of $\sigma'_{a}$ and $z$, both of which are nondegenerate. It follows
that $z=\sigma'_{a}$. Thus $y$ is one of the simplices in $\{\tau_{\lambda}\}_{\lambda\in\Lambda}$,
as required.
\item Suppose that $u$ is surjective on vertices and that the map $\theta:x\pr{r''}\to x\pr{r''+1}$
is not inert. By factoring the map $\theta$ into an inert map followed
by an active map, we can find an $\pr{r+1}$-simplex $y$ of $\cal M_{F_{\leq a}}^{\t}$
such that $d_{r''+1}y=x$ and $y\vert\Delta^{\{r'',r''+1,r''+2\}}$
is the chosen factorization of $\theta$. By the previous point, the
simplex $y$ belongs to $\cal N_{\leq\lambda}$ for some $\lambda$,
and hence so must $x$.
\item Suppose $u$ is not surjective on vertices. Find an inert map $\theta:x\pr r\to X$
over the last edge of $\sigma$, and let $y$ be an $\pr{r+1}$-simplex
$y$ of $\cal M_{F_{\leq a}}^{\t}$ such that $d_{r+1}y=x$ and $y\vert\Delta^{\{r,r+1\}}=\theta$.
By the first point, the simplex $y$ belongs to $\cal N_{\leq\lambda}$
for some $\lambda$, and hence so must $x$.
\end{itemize}
This completes the proof of Lemma \ref{lem:contribution3}.

\subsection{\label{subsec:contribution4}Proof of Lemma \ref{lem:contribution4}}

First we show that the maps $K\to\cal N_{\leq\lambda}$ and $K_{0}\to\cal N_{<\lambda}$
are well-defined. A typical simplex in the image of the map $K\to\cal M^{\t}$
has the form $x:\Delta^{r}\to\cal M^{\t}$, where there is an integer
$-1\leq r'\leq r$ such that $x\vert\Delta^{\{0,\dots,r'\}}$ factors
through $\tau_{\lambda}$ and, if $r'<r$, then $p\pr{x\vert\Delta^{\{r',r'+1\}}}$
is inert and and $p\pr{x\vert\Delta^{\{r'+1,\dots,r\}}}$ is the constant
map at $\pr{\inp 1,n}$. (When $r'=-1$, the symbol $\Delta^{\{0,\dots,r'\}}$
denotes the empty simplicial set.) We must show that such a simplex
belongs to $\cal M_{F_{<a}}^{\t}$or $\cal N_{\leq\lambda}$. There
are three cases to consider:
\begin{itemize}
\item If $r'=-1$, then $x$ lies in $\cal M_{\pr{\inp n,1}}^{\t}$. Since
the vertex $\pr{\inp n,1}\in N\pr{\Fin_{\ast}}\times\Delta^{n}$ belongs
to $F'\pr 0$ if $n=1$ and to $F\pr 0$ if $n>1$,  $x$ belongs
to $\cal M_{F_{<a}}^{\t}$.
\item If $0\leq r'<r$, then by factoring the map $x\pr{r'}\to x\pr{r'+1}$
into an inert map followed by an active map, we can find a simplex
$y$ belonging to $\cal N_{\leq\lambda}$ and satisfying $d_{r'+1}y=x$.
Hence $x$ belongs to $\cal N_{\leq\lambda}$.
\item If $r'=r$, then by choosing an inert map $x\pr{r'}\to X$ with $X\in\cal M$,
we can find a simplex $y$ belonging to $\cal N_{\leq\lambda}$ and
satisfying $d_{r'+1}y=x$. Hence $x$ belongs to $\cal N_{\leq\lambda}$.
\end{itemize}
Next, for the map $K_{0}\to\cal N_{<\lambda}$, assume further that
$x\vert\Delta^{\{0,\dots,r'\}}$ factors through the boundary of $\tau_{\lambda}$.
We must show that $x$ belongs to $\cal M_{F_{<a}}^{\t}$ or $\cal N_{<\lambda}$.
We have two cases to consider:
\begin{itemize}
\item If $r'=-1$, then $x$ belongs to $\cal M_{F_{<a}}^{\t}$ as before.
\item Suppose that $r'\geq0$ and that the head of the simplex $p\pr{x\vert\Delta^{\{0,\dots,r'\}}}$
factors through $\partial\sigma'_{a}$. Then the head of $p\pr x$
factors through $d_{i}\sigma$ for some $0\leq i<m$ for some associate
$\sigma$ of $\sigma'_{a}$, so $x$ belongs to $\cal M_{F_{<a}}^{\t}$
by ($\blacklozenge$). 
\item Suppose that $r'\geq0$ and that the head of the simplex $p\pr{x\vert\Delta^{\{0,\dots,r'\}}}$
is a degeneration of $\sigma'_{a}$.Write $x\vert\Delta^{\{0,\dots,r'\}}=s^{*}y$,
where $y$ is nondegenerate and $s$ is a surjection. Since the head
of $p\pr{x\vert\Delta^{\{0,\dots,r'\}}}$ is a degeneration of both
$\sigma'_{a}$ and the head of $p\pr y$, and since $\sigma'_{a}$
is nondegenerate, the head of $p\pr y$ is a degeneration of $\sigma'_{a}$.
It follows that $y=\tau_{\mu}$ for some $\mu\in\Lambda$. Since $x\vert\Delta^{\{0,\dots,r'\}}$
factors through the boundary of $\tau_{\lambda}$, the dimension of
$y=\tau_{\mu}$ must be smaller than that of $\tau_{\lambda}$. Thus
$\mu<\lambda$. The above argument (that the map $K\to\cal N_{\leq\lambda}$
is well-defined) shows that $x$ belongs to $\cal N_{\leq\mu}$, so
$x$ belongs to $\cal N_{<\lambda}$.
\end{itemize}
This completes the verification of the first half of Lemma \ref{subsec:contribution4}. 

We next proceed to the latter half. We must show that the square % https://q.uiver.app/?q=WzAsNCxbMCwwLCJLXzAiXSxbMCwxLCJLIl0sWzEsMCwiXFxtYXRoY2Fse059X3s8XFxsYW1iZGF9Il0sWzEsMSwiXFxtYXRoY2Fse059X3tcXGxlcSBcXGxhbWJkYX0iXSxbMCwxXSxbMCwyXSxbMSwzXSxbMiwzXV0=
\[\begin{tikzcd}
	{K_0} & {\mathcal{N}_{<\lambda}} \\
	K & {\mathcal{N}_{\leq \lambda}}
	\arrow[from=1-1, to=2-1]
	\arrow[from=1-1, to=1-2]
	\arrow[from=2-1, to=2-2]
	\arrow[from=1-2, to=2-2]
\end{tikzcd}\]is cocartesian. We first show that $\cal N_{\leq\lambda}$ is the
union of the images of $K$ and $\cal N_{<\lambda}$. Unwinding the
definitions, we must show that if $x:\Delta^{r}\to\cal M^{\t}$ is
a simplex for which there is some integer $0\leq r'<r$ such that
$x\vert\Delta^{\{0,\dots,r'\}}$ factors through $\tau_{\lambda}$,
$x\vert\Delta^{\{r',r'+1\}}$ is inert, and $x\vert\Delta^{\{r'+1,\dots,r\}}$
factors through $\cal M$, then $x$ belongs to the union of the images
of $K$ and $\cal N_{\lambda}$. If $x\pr{r'}$ lies over the vertex
$n\in\Delta^{n}$, then $x$ belongs to the image of $K$. If $x\pr{r'}$
does not lie over the vertex $n\in\Delta^{n}$, then the head of $x$
is incomplete, so $x$ belongs to $\cal M_{F\pr 0}^{\t}$ and hence
to $\cal N_{<\lambda}$. Thus we have shown that $\cal N_{\leq\lambda}$
is the union of the images of $K$ and $\cal N_{<\lambda}$.

By the result in the previous paragraph, to show that the square is
cocartesian, it suffices to show that if a simplex $z$ of $K$ is
mapped into $\cal N_{<\lambda}$, then $z$ belongs to $K_{0}$. Taking
the contrapositive, we will show that if $z$ does not belong to $K_{0}$,
then its image in $\cal N_{\leq\lambda}$ does not belong to $\cal N_{<\lambda}$.
Let $x:\Delta^{r}\to\cal N_{\leq\lambda}$ be the image of $z$. By
the definition of the map $K\to\cal N_{\leq\lambda}$ and by the hypothesis
that $z$ does not belong to $K_{0}$, there is an integer $0\leq r'\leq r$
such that $x\vert\Delta^{\{0,\dots,r'\}}$ is a degeneration of $\tau_{\lambda}$
and, if $r'<r$, then $p\pr{x\vert\Delta^{\{r',r'+1\}}}$ is inert
and and $p\pr{x\vert\Delta^{\{r'+1,\dots,r\}}}$ is the constant map
at $\pr{\inp 1,n}$. The head of $p\pr x$ is thus a degeneration
of either $\sigma'_{a}$ or its associate, and so it does not belong
to $F_{<a}$ by ($\blacklozenge\blacklozenge$). Therefore, $x$ does
not belong to $\cal M_{F_{<a}}^{\t}$. So should $x$ belong to $\cal N_{<\lambda}$,
then $x\vert\Delta^{\{0,\dots,r'\}}$ must factor through $\tau_{\mu}$
for some $\mu<\lambda$. This is impossible because $x\vert\Delta^{\{0,\dots,r'\}}$
is a degeneration of $\tau_{\lambda}$. Hence $x$ does not belong
to $\cal N_{<\lambda}$. This completes the proof of Lemma \ref{lem:contribution4}.

\section*{Declarations}

\subsection*{Funding}

No funding was received for conducting this study.

\subsection*{Competing Interests}

The author has no financial or proprietary interests in any material
discussed in this article.

\providecommand{\bysame}{\leavevmode\hbox to3em{\hrulefill}\thinspace}
\providecommand{\MR}{\relax\ifhmode\unskip\space\fi MR }
% \MRhref is called by the amsart/book/proc definition of \MR.
\providecommand{\MRhref}[2]{%
  \href{http://www.ams.org/mathscinet-getitem?mr=#1}{#2}
}
\providecommand{\href}[2]{#2}

\end{document}